\newtheorem{theorem}{Theorem}[section]
\newtheorem{lemma}[theorem]{Lemma}
\newtheorem{proposition}[theorem]{Proposition}
\newtheorem{corollary}[theorem]{Corollary}
\theoremstyle{definition}
\newtheorem{definition}[theorem]{Definition}
\newtheorem{example}[theorem]{Example}
\newtheorem{remark}[theorem]{Remark}
\begin{document}

\title[ FIP and FCP products]{FIP and FCP products of ring morphisms}

\author[G. Picavet and M. Picavet]{Gabriel Picavet and Martine Picavet-L'Hermitte}
\address{Universit\'e Blaise Pascal \\
Laboratoire de Math\'ematiques\\ UMR6620 CNRS \\ Les C\'ezeaux \\ 24, avenue des Landais\\
BP 80026 \\ 63177 Aubi\`ere CEDEX \\ France}

\email{Gabriel.Picavet@math.univ-bpclermont.fr, picavet.gm(at)wanadoo.fr}
\email{Martine.Picavet@math.univ-bpclermont.fr}

\begin{abstract} We   characterize some types of FIP and FCP ring extensions $R \subset S$, where $S$ is not an integral domain and $R$ may not be an integral domain, contrary to a general trend.  In most of the sections, $S$ is a product  of finitely many rings that are related to $R$ in various ways. A section  is devoted to the case where $S$ is the idealization of  an $R$-module. As a by-product  we exhibit characterizations of the modules that have finitely many submodules. Ring extensions of the form $R^n \hookrightarrow R^p$ associated to some matrices are also considered. The paper ends with the FIP property of \'etale morphisms. Our tools are minimal ring morphisms and seminormalization, while Artinian conditions on rings are ubiquitous. 
\end{abstract}

\subjclass[2010]{Primary:13B02, 13B21, 13B22; Secondary: 13E10, 13B30, 13B40, 05A18, 13F10}

\keywords {$\Delta_0$-extension, $\Delta$-extension, quadratic extension, SPIR, Idealization, Stirling number,  crucial ideal, decomposed, inert, ramified extension, flat epimorphism, FIP, FCP extension, minimal ring extension, infra-integral extension, integral extension, integrally closed, Artinian ring, seminormal,  subintegral, support of a module}

\maketitle

\section{Introduction and Notation}

 All rings $R$ considered  are commutative, nonzero and unital; all morphisms of rings are unital. Let $R\subseteq S$ be a (ring) extension. The set of all $R$-subalgebras of $S$  is denoted by $[R,S]$. The extension $R\subseteq S$ is said to have FIP (for the ``finitely many intermediate algebras property") if $[R,S]$ is finite. A {\it chain} of $R$-subalgebras of $S$ is a set of elements of $[R,S]$ that are pairwise comparable with respect to inclusion. We say that the extension $R\subseteq S$ has FCP (for the ``finite chain property") if each chain of $R$-subalgebras of $S$ is finite. It is clear that each extension that satisfies FIP must also satisfy FCP. Our main tool are the minimal (ring) extensions, a concept  introduced by Ferrand-Olivier \cite{FO}. Recall that an extension $R\subset S$ is called {\it minimal} if $[R,S]=\{R,S\}$. The key connection between the above ideas is that if $R\subseteq S$ has FCP, then any maximal (necessarily finite) chain $R=R_0\subset R_1\subset\cdots\subset R_{n-1}\subset R_n=S$, of $R$-subalgebras of $S$, with {\it length} $n<\infty$, results from juxtaposing $n$ minimal extensions $R_i\subset R_{i+1},\ 0\leq i\leq n-1$.  Following \cite{J}, the {\it length of} $[R,S]$, denoted by $\ell[R,S]$, is the supremum of the lengths of chains of $R$-subalgebras of $S$. In particular, if $\ell[R,S]=r$, for some integer $r$, there exists a maximal chain $R=R_0\subset R_1\subset\cdots\subset R_{r-1}\subset R_r=S$ of $R$-subalgebras of $S$ with length $r$. Against the general trend, we characterized arbitrary FCP and FIP extensions  in \cite{DPP2}, a joint paper by D. E. Dobbs and ourselves whereas most of papers on the subject are concerned with extensions of integral domains. It is worth noticing here that FCP extensions of integral domains are generally nothing but  extensions of overrings as a quick look at  \cite[Theorems 4.1,4.4]{CDL2} shows because FCP extensions are composite of minimal extensions.
 
In  this paper, we take the opposite way  and consider the FCP or FIP properties for special types of extensions, like $K \to K^n$ where $K$ is a field. It is known that these extensions have FIP and actually this example  motivated us to study  generalizations.   These extensions are integral and most of time seminormal within  the meaning of Swan. Problems arise when they are not seminormal, leading to the computation of seminormalizations.
The seminal  work on FIP and  FCP by R. Gilmer is settled for $R$-subalgebras of $K$ (also called overrings of $R$), where $R$ is a domain and $K$ its quotient field. In particular, \cite[Theorem 2.14]{G} shows that $R \subseteq S$ has FCP for each overring $S$ of $R$ only  if $R/C$ is an Artinian ring, where $C = (R: \overline{R})$ is the conductor of $R$ in its integral closure. This necessary Artinian  condition is  not surprisingly present in all our results.   

We now give a slight outline of our work, slight because results are too technical to be discussed in an introduction. Roughly speaking, we are concerned by product morphisms $R \to \prod_{i=1}^n R_i$ that are extensions. We will observe that  results  may depend on the value of  $n$.

In Section 2, we look at  diagonal extensions  $R\subseteq\prod_{i=1}^nR_i$, for some finitely many FCP or FIP extensions $R\subseteq R_i$. When $R\subseteq R_i$ has FCP for each $i$,  Proposition~\ref{2.11} asserts  that  $R\subseteq\prod_{i=1}^nR_i$ has FCP if and only if $R$ is an Artinian ring. The FIP condition is much more complicated. For instance,  $R$ has finitely many ideals if $R\subseteq\prod_ {i=1}^nR_i$ has FIP (Proposition~\ref{2.2}).  Moreover, $R\subseteq R^2$ has FIP if and only if $R$    has finitely many ideals (Corollary~\ref{2.5}). 

 Section 3 is concerned with extensions of the form $R/\cap_{j=1}^nI_j\subseteq\prod_{j=1}^n(R/I_j)$, where $I_1,\ldots,I_n$ are proper ideals of a ring $R$, not necessarily distinct. In particular, for $n=2$, we get a generalization of the Chinese Remainder Theorem. 
 
 Section 4 is devoted to diagonal extensions $R \subseteq R^n$. We  get in (Theorem~\ref{4.2}) that $R\subseteq R^n$ has FIP if and only if $R$ has finitely many ideals and $n\leq 2$ as soon as there exists a maximal ideal $M$ of $R$ such that $R_M$ is not a field and $R/M$ is an infinite field. We show that $ R^n$ may have different structures of $R^p$-algebras if $p<n$ are two positive integers, leading to different occurrences  of FIP extensions $R^p\hookrightarrow R^n$. 

 Section 5 is concerned with   $R$-modules $M$  over a ring $R$ and  ring extensions $R\subseteq R(+)M$, where $R(+)M$ is the idealization of $M$. The main results are as follows.  (Proposition~\ref{5.2}) shows that $R\subseteq R(+)M$ has FCP if and only if the length of the $R$-module $M$ is finite, while (Proposition~\ref{5.4}) says that $R\subseteq R(+)M$ has FIP if and only if $M$ has finitely many $R$-submodules. This leads us to characterize $R$-modules having finitely many $R$-submodules in Corollary~\ref{5.6}. An $R$-module $M$, with $C:= (0:M)$, has finitely many submodules if and only if the three following conditions are satisfied: $M$ is finitely generated, $R/C$ has finitely many ideals and $M_P$  is cyclic for any prime ideal $P$ of $R$ containing $C$ such that $R/P$ is infinite. Then Theorem~\ref{5.12} gives a structure theorem for these modules that are faithful.
 
Etale morphisms are considered in Section 6, because separable algebraic extensions of fields are known to have FIP. In order to extend this result, we have to add a seminormality condition that is trivial for algebraic extensions.

Let $R$ be a ring. As usual, Spec$(R)$ (resp$.$ Max$(R)$) denotes the set of all prime ideals (resp$.$ maximal ideals) of $R$. If $I$ is an ideal of $R$, we set ${\mathrm V}_R(I):=\{P\in\mathrm{Spec}(R)\mid I\subseteq P\}$. If $R\subseteq S$ is a ring extension and $P\in\mathrm{Spec}(R)$, then $S_P$ is the localization $S_{R\setminus P}$  and $(R:S)$ is the conductor of $R\subseteq S$. When there is no possible confusion, we denote the integral closure of $R$ in $S$ by $\overline R$. Recall that if $E$ is an $R$-module, its {\it support} $\mathrm{Supp}_R(E)$ is the set of prime ideals $P$ of $R$ such that $E_P\neq 0$ and $\mathrm{MSupp}_R(E):=\mathrm{Supp}_R(E)\cap\mathrm{Max}(R)$. If $E$ is an $R$-module, ${\mathrm L}_R(E)$ is its length. We will shorten finitely generated module into f.g$.$ module. Recall that a {\it special principal ideal ring} (SPIR) is a principal ideal ring $R$ with a unique nonzero prime ideal $M=Rt$, such that $M$ is nilpotent of index $p>0$. Hence a SPIR is not a field. Each nonzero element of a SPIR is of the form $ut^k$ for some unit $u$ and some {\it unique} integer $k< p$.  Finally, as usual, $\subset$ denotes proper inclusion and $|X|$ denotes the cardinality of a set $X$.

We sum up the fundamental results on minimal  extensions we need.

\begin{theorem}\label{1.1} \cite{Dec}, \cite[Theorem 4.1]{DPPS}, \cite[Th\'eor\`eme 2.2 and Lemme 3.2]{FO} and \cite[Proposition 3.2]{Pic}. Let $A\subset B$ be a minimal extension with associated inclusion map $f: A \to B$. Then:\\
\indent (a) There is some $M\in\mathrm{Max}(A)$, called the {\bf crucial (maximal) ideal} of $A\subset B$, such that $A_P=B_P$ for each $P\in\mathrm{Spec}(A)\setminus\{M\}$. We denote this ideal by $\mathcal{C}(A,B)$.\\
\indent (b) The following three conditions are equivalent:\\
\indent \indent (1) There is a prime ideal in $B$ lying over $M$;\\
\indent \indent (2) $MB=M$;\\
\indent \indent (3) $f$ is $($module-$)$finite.\\
\indent (c) The conditions of (b) do not hold if and only if $f$ is a flat epimorphism and then $(A:B)$ is a common prime ideal of $A$ and $B$ that is contained in $M$.\\
\indent (d)  There is a bijection $\mathrm{Spec}(B)\setminus{\mathrm V}_B(MB)\to\mathrm{Spec}(A)\setminus\{M\}$, with ${\mathrm V}_B(MB)=\emptyset$ when $f$ is a flat epimorphism. Moreover, for each $Q\in\mathrm{Max}(B)$, either $Q\cap A\in\mathrm{Max}(A)$ or $Q=(A:B)$.
\end{theorem}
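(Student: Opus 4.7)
The plan is to establish (a) by analyzing the localization behavior of $B$ over $A$, split (b)--(c) into integral versus flat-epimorphism cases according to whether $MB=M$, and derive (d) from (a) together with the structure obtained.

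For (a), given $M\in\mathrm{Max}(A)$ I would consider the intermediate $A$-subalgebra
$A^{(M)} := \{x \in B : sx \in A \text{ for some } s \in A \setminus M\}$
of $B$. Minimality forces $A^{(M)} \in \{A,B\}$, and one checks that $A^{(M)} = B$ exactly when $A_M = B_M$. Let $\Sigma := \{M\in\mathrm{Max}(A) : A_M\ne B_M\}$; so $\Sigma = \{M : A^{(M)}=A\}$ and I must prove $|\Sigma|=1$. For any $x\in B\setminus A$ (which exists since $A\ne B$), the conductor $I_x := \{a\in A : ax\in A\}$ is contained in every $M\in\Sigma$. If distinct maximal ideals $M_1,M_2$ both lay in $\Sigma$, then $I_x\subseteq M_1\cap M_2$; using $M_1+M_2=A$ one can decompose $x$ via a Chinese-Remainder-type splitting and produce an $A$-subalgebra strictly between $A$ and $B$, a contradiction. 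The unique element of $\Sigma$ is the crucial ideal $\mathcal{C}(A,B)$.

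For (b)--(c), the dichotomy is $MB = M$ versus $MB \ne M$. When $MB = M$, the residue extension $A/M \hookrightarrow B/M$ is again a minimal extension, now over a field, and a direct classification yields three possibilities: a minimal finite field extension, the split product $A/M \times A/M$, or the dual-numbers ring $(A/M)[X]/(X^2)$. In all three cases $B/M$ is finite-dimensional over $A/M$; lifting generators and applying Nakayama (using that $B/A$ is supported only at $M$) gives that $B$ is module-finite over $A$, proving (2)$\Rightarrow$(3); the implications (3)$\Rightarrow$(1) (Lying-Over) and (1)$\Rightarrow$(2) are routine. If the conditions of (b) fail, then $MB=B$ and $B$ is not module-finite over $A$; the minimality of $A_M\subset B_M$ then forces $A_M\to B_M$ to be the localization of $A_M$ at some prime $\mathfrak{p}\subsetneq MA_M$, which globally yields a flat epimorphism $A\to B$ and identifies $(A:B)$ with the contraction of $\mathfrak{p}$, a common prime of $A$ and $B$ contained in $M$.

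For (d), the bijection $\mathrm{Spec}(B)\setminus V_B(MB) \to \mathrm{Spec}(A)\setminus\{M\}$ is immediate from (a), since $A_P = B_P$ for $P\ne M$ forces a unique prime of $B$ over each such $P$. In the flat-epimorphism case $MB = B$, so $V_B(MB) = \emptyset$. For $Q\in\mathrm{Max}(B)$: either $Q\cap A = M$ (and we are done), or $Q\cap A\in\mathrm{Spec}(A)\setminus\{M\}$, in which case we are necessarily in the flat-epimorphism setting of (c) and the bijection shows $Q\cap A$ must be the unique prime $(A:B)$ below the primes of $B$ outside $M$, whence $Q = (A:B)$ as an ideal of $B$. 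The main obstacle I expect is the fine structure theorem of (c): proving that $(A:B)$ is actually prime in both rings, contained in $M$, and that the resulting map is truly a flat epimorphism rests on the delicate Ferrand--Olivier analysis and is the technical heart of the whole statement.
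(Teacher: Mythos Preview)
The paper does not supply a proof of Theorem~1.1; the result is quoted from the literature (the citations to \cite{Dec}, \cite{DPPS}, \cite{FO}, \cite{Pic}) and used throughout as background, so there is no argument in the paper to compare your proposal against.

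As an independent sketch your outline follows the Ferrand--Olivier strategy, but two points are not routine. In (a), the ``Chinese-Remainder-type splitting'' producing a strictly intermediate subalgebra from $I_x\subseteq M_1\cap M_2$ is not justified: writing $1=e_1+e_2$ with $e_i\in M_i$ one finds $e_ix\notin A$ (since $e_i\notin M_1\cap M_2\supseteq I_x$), hence $A[e_ix]=B$ by minimality, and the natural candidates $A+Ae_ix$ are not rings without a quadratic relation on $x$. The classical route deduces the uniqueness of the crucial ideal \emph{after} (or simultaneously with) the finite/flat-epimorphism dichotomy rather than directly. In (d), your case split should be on whether $Q\cap A$ is maximal in $A$, not on whether $Q\cap A=M$: in the module-finite case every $Q\in\mathrm{Max}(B)$ contracts to some maximal ideal of $A$ (possibly different from $M$), and that already settles the alternative; only when $Q\cap A$ is non-maximal is one forced into the flat-epimorphism case, where the identification $B_M\cong (A_M)_{\mathfrak{p}A_M}$ with $\mathfrak{p}=(A:B)$ is what actually yields $Q=\mathfrak{p}$. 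Your appeal to ``the bijection'' does not by itself give this.
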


Three types of minimal integral extensions exist, given by the next Theorem.

\begin{theorem}\label{1.2} \cite[Theorem 3.3]{Pic} Let $R\subset T$ be a ring extension and $M: =(R:T)$. Then $R\subset T$ is minimal and finite if and only if $M\in\mathrm{Max}(R)$ and one of the following three conditions holds:\\
\indent (a) {\bf inert case}: $M\in\mathrm{Max}(T)$ and $R/M\to T/M$ is a minimal field extension;\\
\indent (b) {\bf decomposed case}: There exist $M_1,M_2\in\mathrm{Max}(T)$ such that $M= M_1 \cap M_2$ and the natural maps $R/M\to T/M_1$ and $R/M\to T/M_2$ are both isomorphisms;\\
\indent (c) {\bf ramified case}: There exists $M'\in\mathrm{Max}(T)$ such that ${M'}^ 2 \subseteq M\subset M',\  [T/M:R/M]=2$ (resp$.$ $\mathrm{L}_R(M'/M)= 1$), and the natural map $R/M\to T/M'$ is an isomorphism.\\
\indent In each of the above three cases, $M$ is the crucial ideal of $R \subset T$.
\end{theorem}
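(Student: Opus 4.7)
The plan is to reduce both directions to a classification of finite commutative algebras over a field that possess exactly two subalgebras, using the conductor to pass between $[R,T]$ and $[R/M,T/M]$.

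For the forward direction, assume $R \subset T$ is minimal and finite. Theorem~\ref{1.1}(a)--(b) supplies a crucial maximal ideal $\mathcal{C}(R,T)$ of $R$ with $\mathcal{C}(R,T) \cdot T = \mathcal{C}(R,T)$, so $\mathcal{C}(R,T) \subseteq (R:T) = M$; since $M \neq R$ and $\mathcal{C}(R,T)$ is maximal, $M = \mathcal{C}(R,T) \in \mathrm{Max}(R)$. Because $M$ is a common ideal of $R$ and $T$, quotienting by $M$ yields an order-preserving bijection $[R,T] \to [k,A]$, where $k := R/M$ and $A := T/M$. Minimality and $R \neq T$ transfer to: $A$ is a finite-dimensional $k$-algebra whose only $k$-subalgebras are $k$ and $A$.

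The central step is to classify such $A$. Write $A = \prod_{i=1}^r A_i$ with $A_i$ local Artinian with maximal ideal $\mathfrak{n}_i$ and residue field $F_i$. I would rule out non-permitted configurations by producing explicit intermediate subalgebras. If $\mathfrak{n}_i \neq 0$ for some $i$, then in $A_i$ the $k$-subalgebra $k + \mathfrak{n}_i^2$ is proper (since $\mathfrak{n}_i \neq \mathfrak{n}_i^2$ by Nakayama), so minimality forces $\mathfrak{n}_i^2 = 0$; then for $0 \neq z \in \mathfrak{n}_i$ the subalgebra $k[z] = k + kz$ forces $\dim_k \mathfrak{n}_i = 1$ and $F_i = k$, so $A_i \cong k[x]/(x^2)$. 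If instead $F_i \supsetneq k$ with $\mathfrak{n}_i \neq 0$, then $k + \mathfrak{n}_i$ is a proper intermediate subalgebra; if $\mathfrak{n}_i = 0$ and $F_i \supsetneq k$, then $A_i$ is a field and $k \subset A_i$ inherits the ``only two subalgebras'' property, i.e. is a minimal field extension. To get $r \leq 2$: if $r \geq 3$ and any $A_i \neq k$ one can glue the non-trivial factor with a diagonal on the remaining coordinates to get a proper intermediate subalgebra; if all $A_i = k$ then for $r \geq 3$ the partition $\{\{1\},\{2,\dots,r\}\}$ supplies a proper diagonal $k^2 \subsetneq k^r$ above the full diagonal $k$. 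Thus $r = 1$ with $A$ either a minimal field extension of $k$ (the inert case) or $A \cong k[x]/(x^2)$ (the ramified case), or $r = 2$ with both factors equal to $k$, giving $A \cong k \times k$ (the decomposed case). Translating back through the quotient $T \to T/M$ yields respectively conditions (a), (b), (c).

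For the backward direction, each of (a), (b), (c) immediately identifies $A = T/M$ with one of the three algebras above. In each one checks directly that $[k,A] = \{k,A\}$: for $A = L$ by definition of minimal field extension; for $A = k \times k$ because any element off the diagonal together with $(1,1)$ already spans $A$; for $A = k[x]/(x^2)$ because any element $a + bx$ with $b \neq 0$ yields $x$ and hence $A$. The bijection $[R,T] \cong [k,A]$ therefore gives minimality. Finiteness follows because $M \subseteq R$ and $T/M$ is finite-dimensional over $k$, so lifts of a $k$-basis of $T/M$ generate $T$ as an $R$-module. Finally, Theorem~\ref{1.1}(a) identifies $M$ as the crucial ideal.

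The main obstacle is the classification step above: producing the right intermediate $k$-subalgebras in each case and ensuring each candidate is simultaneously strictly larger than $k$ and strictly smaller than $A$. The subtlety lies in the mixed product/local case, where non-triviality of a single local factor already yields a proper intermediate subalgebra of the whole product by combining it with a diagonal of the other factors.
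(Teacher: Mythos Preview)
The paper does not supply a proof of this theorem; it is quoted from \cite[Theorem 3.3]{Pic}, so there is no in-paper argument to compare against. Your reduction to the classification of finite $k$-algebras $A$ with $[k,A]=\{k,A\}$ is the standard route, and the backward direction is carried out correctly.

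There is, however, a genuine gap in the forward classification. Your local analysis of a single factor $A_i$ (forcing $\mathfrak n_i^2=0$, then $\dim_k\mathfrak n_i=1$, etc.) is valid only when $r=1$, since the subalgebras $k+\mathfrak n_i^2$ and $k[z]$ you exhibit live in $A_i$, not in the product $A$. You then argue only that $r\le 2$ (by producing intermediate subalgebras when $r\ge 3$) and conclude ``$r=2$ with both factors equal to $k$'' without justification: nothing you wrote excludes $r=2$ with $\dim_k A_1\ge 2$. The fix is immediate: for $r=2$ and $A_1\neq k$, the set $B=\{(c\cdot 1_{A_1},a_2):c\in k,\ a_2\in A_2\}$ is a $k$-subalgebra of $A_1\times A_2$ containing the diagonal $k$, with $\dim_k B=1+\dim_k A_2\ge 2$ (so $B\neq k$) and $B\neq A$ since $A_1\neq k$; this contradicts minimality. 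Inserting this observation (and its symmetric counterpart for $A_2\neq k$) completes your trichotomy.
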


We also need some results about seminormality and t-closedness that we recall here. 

\begin{definition}\label{1.3} An integral extension  $f: R\hookrightarrow S$  is termed:

(1)  {\it infra-integral}  if all its residual extensions  are isomorphisms \cite{Pic 2}.  

(2)  {\it subintegral} if $f$ is infra-integral and  ${}^af$ is  bijective \cite{S}. 

\end{definition}

A minimal morphism is ramified if and only if it is subintegral. Let $\{R_1,\ldots, R_n\}$ be finitely many infra-integral extensions of a ring $R$. It is easy to show that $R\to \prod_{i=1}^n R_i$ is infra-integral. But this result is no longer valid for subintegrality.

A ring extension $R\subseteq S$ is called {\it t-closed} if $b\in S,\ r\in R,\ b^2-rb,b^3-rb^2\in R \Rightarrow b\in R$ \cite{Pic 2}. Now, $R\subseteq S$ is called {\it seminormal} if $b\in S,\ b^2,b^3\in R \Rightarrow b\in R$ \cite{S}. If $R\subset S$ is seminormal, $(R:S)$ is a radical ideal of $S$. The $t$-{\it closure}  $\substack{t\\ S}R$ (resp$.$  {\it seminormalization}  $\substack{+\\ S}R$)     of $R$ in $S$ is the smallest $B \in [R,S]$  such that $B\subseteq S$ is t-closed (resp$.$  seminormal). Moreover,  $\substack{t\\ S}R$ (resp$.$  $\substack{+\\ S}R$) is the greatest $B\in[R,S]$ such that $R\subseteq B$ is infra-integral (resp$.$ subintegral). The chain $R\subseteq \substack{+\\ S}R\subseteq \substack{t\\ S}R \subseteq S$ is called the {\it canonical decomposition} of $R\subseteq S$.

T-closure and seminormalization  both commute with localization at arbitrary multiplicatively closed subsets (\cite[Proposition 3.6]{Pic 1}, \cite[Proposition 2.9]{S}).

According to J. A. Huckaba and I. J. Papick \cite{HP}, an extension $R\subseteq S$ is termed a $\Delta_0$-{\it extension} provided each $R$-submodule of $S$ containing $R$ is an element of $[R,S]$,  a {\it quadratic extension} if $R+Rt\in [R,S]$ for each $t\in S$, and  a $\Delta$-{\it extension} if $R_1+R_2\in[R, S]$ for $R_1,R_2\in[R,S]$. By \cite[Proposition 5]{HP}, an extension is  $\Delta_0$ if and only if this extension is quadratic and  $\Delta$. We recall here for later use an unpublished result of the Gilbert's dissertation.

\begin{proposition}\label{1.4} \cite[Proposition 4.12]{Gil} Let $R \subseteq S$ be a ring extension with conductor $I$ and such that $S = R + Rt$ for some $t\in S$. Then the $R$-modules $R/I$ and $S/R$ are isomorphic. Moreover, each of the $R$-modules between $R$ and $S$ is a ring (and so there is a bijection from $[R,S]$  to the set of ideals of $R/I$). 
\end{proposition}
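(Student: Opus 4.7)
\medskip\noindent\textbf{Proof plan.} The plan is to build an explicit $R$-module isomorphism $\varphi\colon R/I\to S/R$ and then transport the submodule lattice across it, finally checking multiplicative closure by a direct computation using $t^2\in R+Rt$.

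First, I would define $\varphi\colon R\to S/R$ by $\varphi(r)=rt+R$, which is clearly $R$-linear. Surjectivity is immediate from $S=R+Rt$: any $s\in S$ can be written $s=r_1+r_2t$, so $s+R=r_2t+R=\varphi(r_2)$. For the kernel, one inclusion is trivial: if $r\in I$, then $rt\in rS\subseteq R$, so $I\subseteq\ker\varphi$. The reverse inclusion is the step that really uses the hypothesis $S=R+Rt$: if $rt\in R$, then for any $s=r_1+r_2t\in S$ we have
\[
rs=rr_1+r_2(rt)\in R,
\]
so $r\in(R:S)=I$. I expect this to be the only step requiring any thought; without the generating condition $S=R+Rt$, one would only control $rt$ and not the full product $rs$. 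Thus $\varphi$ factors through an $R$-module isomorphism $R/I\cong S/R$.

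Next, the isomorphism yields a lattice bijection between the $R$-submodules of $S$ containing $R$ and the $R$-submodules of $R/I$, i.e.\ the ideals $J$ of $R$ with $J\supseteq I$. Explicitly, the submodule corresponding to such a $J$ is $M_J:=R+Jt$.

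Finally, to show that every $M_J$ is a subring of $S$, I would invoke the hypothesis once more to write $t^2=e+ft$ with $e,f\in R$. Then for $r_1,r_2\in R$ and $j_1,j_2\in J$,
\[
(r_1+j_1t)(r_2+j_2t)=(r_1r_2+ej_1j_2)+(r_1j_2+r_2j_1+fj_1j_2)t,
\]
and since $J$ is an ideal of $R$ containing $fj_1j_2$, $r_1j_2$, $r_2j_1$, the coefficient of $t$ lies in $J$, while the constant term lies in $R$. Hence the product is again in $M_J$, so $M_J\in[R,S]$, and the announced bijection $[R,S]\leftrightarrow\{\text{ideals of }R/I\}$ follows immediately.
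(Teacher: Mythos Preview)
Your argument is correct. The paper itself does not supply a proof of this proposition: it is merely quoted from Gilbert's unpublished dissertation \cite[Proposition 4.12]{Gil}, so there is no in-paper proof to compare against. Your approach---building the $R$-linear map $r\mapsto rt+R$, identifying its kernel with $I=(R:S)$ via the computation $rs=rr_1+r_2(rt)$, and then checking multiplicative closure of $R+Jt$ using $t^2=e+ft$---is the natural and direct route, and each step is sound.
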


We end this introduction with a new result that introduces and gives the flavor of the next section. 

\begin{proposition}\label{1.5} Let $R$ be a commutative ring and $n\geq 2$ a positive integer. Then $R\subseteq R^n$ has FCP if and only if $R$ is an Artinian ring. 
 \end{proposition}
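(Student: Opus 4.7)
The plan is to handle each direction with a different technique: a lattice-embedding of ideals into subalgebras for the forward direction, and a length-of-module bound for the reverse.

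For the forward direction, assume $R \subseteq R^n$ has FCP, with $n \geq 2$. For each ideal $I$ of $R$, I will set
\[
B_I := \{(r+a,\,r,\,\ldots,\,r) \in R^n : r \in R,\ a \in I\}.
\]
A direct verification shows that $B_I$ is an $R$-subalgebra of $R^n$: it contains the diagonal copy of $R$ (take $a=0$) and the identity, it is closed under addition, and closure under componentwise multiplication reduces to the fact that $rb + sa + ab \in I$ whenever $a,b \in I$. Intersecting $B_I$ with $R \times 0 \times \cdots \times 0$ recovers $I \times 0 \times \cdots \times 0$, so the assignment $I \mapsto B_I$ is a strictly order-preserving injection from the ideal lattice of $R$ into $[R, R^n]$. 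Any infinite descending (or ascending) chain of ideals of $R$ would therefore produce an infinite chain in $[R, R^n]$, contradicting FCP. Hence $R$ satisfies DCC on ideals, i.e., $R$ is Artinian.

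For the converse, assume $R$ is Artinian. Then $R$ is also Noetherian (Akizuki), and $\mathrm{L}_R(R)$ is finite. As $R^n$ is a free $R$-module of rank $n$, one gets $\mathrm{L}_R(R^n) = n\,\mathrm{L}_R(R) < \infty$. Every chain of $R$-subalgebras of $R^n$ is in particular a chain of $R$-submodules, and so its length is bounded by $\mathrm{L}_R(R^n)$. This gives FCP for $R \subseteq R^n$.

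The only substantive step is verifying that $I \mapsto B_I$ is a well-defined order-embedding; the remainder is essentially bookkeeping. The hypothesis $n \geq 2$ is used precisely to separate $(a,0,\ldots,0)$ from the diagonal $(a,a,\ldots,a)$ inside $B_I$, without which the embedding collapses and the forward direction fails (as it must, $R \subseteq R$ being trivially FCP for any $R$).
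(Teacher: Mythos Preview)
Your proof is correct. The forward direction is essentially the paper's argument: the paper writes $S_j := R + (0 \times I_j)$, which after a coordinate swap is exactly your $B_{I_j} = R + (I_j \times 0 \times \cdots \times 0)$, and both of you conclude that an infinite chain of ideals would yield an infinite chain in $[R,R^n]$.

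The converse is where the two arguments diverge. The paper notes that $(R:R^n)=0$ and $R^n$ is module-finite over $R$, then invokes \cite[Theorem~4.2]{DPP2} (an integral finite extension has FCP once $R/(R:S)$ is Artinian). Your argument is more elementary and self-contained: Artinian implies finite length, $\mathrm{L}_R(R^n)=n\,\mathrm{L}_R(R)<\infty$, and any chain of $R$-subalgebras is in particular a chain of $R$-submodules, hence bounded. This avoids the external reference entirely; the trade-off is that the cited theorem is the workhorse the paper uses repeatedly elsewhere, so in context the citation is natural, whereas your route stands on its own.
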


\begin{proof} Assume that $R\subseteq R^n$ has FCP and that there is an infinite chain $\{I_j\}_{j\in J}$ of ideals of $R$. For each $j\in J$, set $S_j:=R+(0\times I_j)$. Then, $\{S_j\}_{j\in J}$ is an infinite chain of $R$-subalgebras of $R^n$, which is absurd. Hence, any chain of ideals of $R$ is finite and $R$ is  Artinian.

Conversely,   $R\subseteq R^n$  has a zero conductor and $R^n$ is  f.g$.$ over $R$. Thus $R\subseteq R^n$ has FCP in view of \cite[Theorem 4.2]{DPP2}, if $R$ is Artinian. 
\end{proof}

The following results will be useful.

\begin{proposition}\label{1.6} Let $(R,M)$ be a local Artinian ring such that $R/M$ is infinite and $R\subseteq S$ a ring extension with conductor $C:= (R:S)$.
\begin{enumerate}
\item If $R\subset S$ has  FIP  and  is subintegral, then $[R,S]$ is linearly ordered.

\item If $R\subseteq S$ is finite, seminormal and infra-integral, then  $R\subseteq S$ has FIP.

\item If $R\subset S$ is  finite and  infra-integral, then $R\subset S$ has FIP if and only if $R\subseteq \substack{+\\ S}R$ has FIP. 
\end{enumerate} 
\end{proposition}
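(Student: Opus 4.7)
For part (1), I would argue by contradiction: assume $[R,S]$ contains two incomparable elements. By FIP, descend to two incomparable minimal subintegral sub-extensions $R \subset T_i$ ($i=1,2$), both ramified (remark after Definition~\ref{1.3}) and hence of the form $T_i = R[x_i]$ with $(R:T_i) = M$ (Theorem~\ref{1.2}(c)). Note that $T_1 \cap T_2 = R$ by minimality and incomparability. For each $\lambda \in R$, set $U_\lambda := R[x_1 + \lambda x_2] \in [R,S]$. The plan is to show that $\lambda \not\equiv \mu \pmod{M}$ forces $U_\lambda \neq U_\mu$: otherwise subtraction of the two generators yields $(\lambda - \mu) x_2 \in U_\lambda$, hence $x_2 \in U_\lambda$ (the difference being a unit), and then $x_1 \in U_\lambda$. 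This produces $T_1 + T_2 \subseteq U_\lambda$; a dimension count over $R/M$ inside $(T_1 + T_2)/R \cong T_1/R \oplus T_2/R$ (a two-dimensional $R/M$-space, as $M$ kills both summands by Proposition~\ref{1.4}) then contradicts the fact that $U_\lambda$ is $R$-generated by the single image $\overline{x_1 + \lambda x_2}$ modulo the $R$-relations on $y^2$. Infiniteness of $R/M$ supplies infinitely many distinct $U_\lambda$, contradicting FIP.

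For part (2), I would reduce to the classical finiteness of $[k, k^n]$ where $k := R/M$. Seminormality makes $C := (R:S)$ a radical ideal of the Artinian ring $S$, so $S/C$ is a finite product of fields; infra-integrality identifies each factor with $k$, giving $S/C \cong k^n$. The injection $R/C \hookrightarrow k^n$, projected to any coordinate, is a ring map from a local Artinian ring to a field and so factors as $R/C \twoheadrightarrow R/M \xrightarrow{\sim} k$ (the second arrow an identity via infra-integrality). Hence the image of $R/C$ lies in the diagonal copy of $k$, and injectivity forces $R/C = k$, i.e., $C = M$. The poset isomorphism $T \mapsto T/M$ then identifies $[R,S]$ with $[k, k^n]$, which is finite (in bijection with set partitions of $\{1, \ldots, n\}$) regardless of $|k|$.

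For part (3) the forward direction is trivial. For the converse, set $S' := \substack{+\\ S}R$; then $R \subseteq S'$ is subintegral FIP (so a finite chain by (1)), and $S' \subseteq S$ is seminormal, infra-integral and finite, hence FIP by (2). For each $U \in [R,S]$, one verifies $\substack{+\\ U}R = U \cap S'$ (the right side is subintegral over $R$ and maximal such inside $U$). I would then show the map $U \mapsto (U \cap S',\, U\cdot S')$ is an injection $[R,S] \hookrightarrow [R, S'] \times [S', S]$, yielding finiteness. The main obstacle, and the most delicate step of the whole proof, is this injectivity: given $T \in [R, S']$ and $V \in [S', S]$, at most one $U \in [T, V]$ can satisfy $U \cap S' = T$ and $U \cdot S' = V$. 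Establishing this requires combining the seminormality of $T \subseteq U$ (which identifies $U$ as a canonical sub of $V$ relative to $T$) with the explicit description of $[S', V]$ furnished by part (2)'s reduction applied to the seminormal infra-integral extension $S' \subseteq V$.
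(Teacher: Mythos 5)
Your part (2) is correct and is a serviceable self-contained replacement for the paper's citation of \cite[Proposition 5.16]{DPP2}: reducing modulo $C$, using seminormality to get $S/C\cong k^n$ and infra-integrality to force $C=M$, then counting $[k,k^n]$ by set partitions is exactly the right mechanism. Parts (1) and (3) have genuine gaps. (For comparison, the paper does not argue either one directly: for (1) it invokes the proof of \cite[Proposition 5.15]{DPP2} when $R$ is not a field and \cite[Theorem 3.8, Lemma 3.6(b)]{ADM} when it is; for (3) it deduces the converse from \cite[Theorem 5.8]{DPP2}.) In (1), two steps fail as written. First, two incomparable elements of $[R,S]$ need not lie above two incomparable \emph{minimal} extensions of $R$: the atoms of $[R,S]$ could be linearly ordered, or even unique, while the lattice branches higher up. You must first replace $R$ by the intersection $W$ of the two incomparable algebras and take minimal extensions of $W$ inside each ($W$ is again local Artinian with infinite residue field since $R\subseteq W$ is subintegral and finite, but this reduction has to be made). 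Second, and more seriously, your concluding dimension count is not a contradiction. From $U_\lambda=U_\mu$ you correctly get $T_1+T_2\subseteq U_\lambda$, but $U_\lambda=R[x_1+\lambda x_2]$ is \emph{not} the $R$-module $R+R(x_1+\lambda x_2)$: the square $(x_1+\lambda x_2)^2$ contains the cross term $2\lambda x_1x_2$, over which you have no control, so $U_\lambda/R$ need not be cyclic, let alone one-dimensional over $R/M$, and there is nothing absurd about its containing a two-dimensional submodule. The analogous computations in Proposition~\ref{2.15} and Corollary~\ref{2.16} only go through because the SPIR hypothesis there pins down $x^2$ and $tx$ explicitly; in the present generality the dichotomy ``either all $U_\lambda$ are distinct or two coincide'' is not closed off.

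In (3), the forward direction and your auxiliary observations (that $R\subseteq\substack{+\\ S}R$ is a subintegral FIP extension, that $\substack{+\\ S}R\subseteq S$ has FIP by (2), and that $\substack{+\\ U}R=U\cap\substack{+\\ S}R$ for $U\in[R,S]$) are fine, but the entire converse rests on the injectivity of $U\mapsto(U\cap S',\,US')$, which you yourself flag as ``the most delicate step'' and then only gesture at. This is not a routine verification: lattices of intermediate rings of FCP extensions are not distributive in general, so a pair map of this kind has no a priori reason to be injective, and establishing it in this situation is essentially the content of the cited \cite[Theorem 5.8]{DPP2}. Until that step is actually supplied, part (3) is a plan rather than a proof.
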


\begin{proof} (1)  There is no harm to assume that   $C=0$ because the map $[R,S] \to [R/C,S/C]$ defined by $T \mapsto T/C$ is  bijective.  If $R$ is not a field, then  the proof of \cite[Proposition 5.15]{DPP2} shows that  $[R,S]$ is linearly ordered. 

Now, assume  that $R$ is a field, so that $0=(R:S)$ and $R$ is infinite. Since $R\subset S$ is an FIP subintegral extension, $S$ is Artinian local and not a field with $\{N\}:=\mathrm{Max}(S)$, because $R\cong S/N$ by subintegrality shows that $N\neq 0$. From \cite[Theorem 3.8]{ADM}, we get that $S=R [\alpha]$, for some $\alpha\in S$ such that $\alpha^3=0$. In view of the proof of \cite[Lemma 3.6(b)]{ADM}, $[R,S]$ is linearly ordered.

(2) We can assume that $R\neq S$ and $C= 0$ by considering $R/C \to S/C$ and using \cite[Proposition 3.7(c)]{DPP2}. By \cite[Proposition 5.16]{DPP2}, we get that  $R\subset S$ has FIP.

(3)  Assume that $R\subset S$ is  finite  and infra-integral  and set $T:=\substack{+\\ S}R$. Then, $T$ is  local Artinian with maximal ideal  $N$ and $T/N\cong R/M$ is infinite. Moreover, $T\subseteq S$ is  finite, seminormal, infra-integral  and  has FIP by (2).

If $R\subset S$ has FIP, then $R\subseteq T$ has FIP. Conversely, assume that  $R\subseteq T$ has FIP. In view of \cite[Theorem 5.8]{DPP2},  $R\subset S$ has FIP.
\end{proof}

We will use the following result. If $R_1,\ldots,R_n$ are finitely many rings,  the ring $R_1\times \cdots \times R_n$ localized at the prime ideal $P_1\times R_2\times\cdots \times R_n$ is isomorphic to $(R_1)_{P_1}$ for $P_1 \in \mathrm{Spec}(R_1)$. This rule works for any prime ideal of the product. 

\section{FCP or FIP extensions for products of rings}

We extract from the more precise result \cite[Proposition 4.15]{DPP3} the following statement, about the canonical diagonal extension $K\subseteq K^n$, for a field $K$  and a positive integer $n> 1$.  Recall that the $n$th {\it Bell number} $B_n$ is the number  of partitions of $\{1,\ldots,n\}$ \cite[p. 214]{An}. Actually, the finiteness of $|[K,K^n]|$ comes from \cite[Proposition 3, p. 29]{Bki A}

\begin{proposition}\label{2.1} Let $K$ be a field and $n$ a positive integer, $n>1$.  Then
$|[K,K^n]|=B_n$, where $B_n$ is the $n$th  Bell number and $K\subseteq K^n$ is a seminormal and infra-integral FIP extension.
 
 \end{proposition}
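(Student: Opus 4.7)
The plan is to exhibit an explicit bijection between $[K,K^n]$ and the set $\mathrm{Part}(n)$ of partitions of $\{1,\ldots,n\}$, whose cardinality is by definition the Bell number $B_n$. For a partition $\mathcal{P}=\{P_1,\ldots,P_r\}$, set $A_\mathcal{P}:=\{(x_1,\ldots,x_n)\in K^n\mid x_i=x_j\text{ whenever }i,j\text{ lie in the same block of }\mathcal{P}\}$, a $K$-subalgebra of $K^n$ with $K$-basis the idempotents $e_{P_k}=\sum_{i\in P_k}e_i$, so $\dim_K A_\mathcal{P}=r$. Conversely, to $A\in[K,K^n]$ I attach the partition $\mathcal{P}_A$ induced by the equivalence relation $i\sim_A j\Leftrightarrow \pi_i(a)=\pi_j(a)$ for all $a\in A$, where $\pi_i$ denotes the $i$th coordinate projection. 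Clearly $\mathcal{P}_{A_\mathcal{P}}=\mathcal{P}$ and $A\subseteq A_{\mathcal{P}_A}$.

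The substantial step is the reverse inclusion $A_{\mathcal{P}_A}\subseteq A$, which I would derive by a dimension count. Since $K^n$ is finite over $K$, the subalgebra $A$ is a reduced finite $K$-algebra, hence Artinian and integral over $K$, and so $A\cong\prod_{\mathfrak{m}\in\mathrm{Max}(A)} A/\mathfrak{m}$. By integrality of $A\subseteq K^n$, every $\mathfrak{m}\in\mathrm{Max}(A)$ has the form $M_i\cap A$ with $M_i=\ker\pi_i$, and the injection $A/(M_i\cap A)\hookrightarrow K^n/M_i=K$, combined with $K\subseteq A$, forces $A/(M_i\cap A)=K$. A brief check then shows $M_i\cap A=M_j\cap A\Leftrightarrow i\sim_A j$: if the kernels agree, both $\pi_i|_A$ and $\pi_j|_A$ factor as the canonical $K$-algebra projection $A\to A/(M_i\cap A)=K$ and therefore coincide. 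Hence $\dim_K A=|\mathrm{Max}(A)|$ equals the number of blocks of $\mathcal{P}_A$, which is also $\dim_K A_{\mathcal{P}_A}$; together with $A\subseteq A_{\mathcal{P}_A}$ this forces equality, completing the bijection. In particular $|[K,K^n]|=B_n$ and $K\subseteq K^n$ has FIP.

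It remains to verify the infra-integral and seminormal claims. The extension $K\subseteq K^n$ is integral since $K^n$ is finite over $K$; the maximal ideals of $K^n$ are exactly $M_1,\ldots,M_n$ and each residual extension is the identity $K\to K^n/M_i=K$, so $K\subseteq K^n$ is infra-integral. For seminormality, suppose $b=(b_1,\ldots,b_n)\in K^n$ satisfies $b^2,b^3\in K$ (embedded diagonally); then $b_i^2=c$ and $b_i^3=d$ are independent of $i$, and either $c=0$ and $b=0\in K$, or $c\neq 0$ and $b_i=d/c$ for all $i$, so again $b\in K$. The main obstacle in the whole argument is the surjectivity step $A=A_{\mathcal{P}_A}$, which hinges on identifying $A$ with the product of its residue fields and matching the maximal ideals of $A$ with the equivalence classes of $\sim_A$; the remaining pieces are either counting or direct verification from the definitions.
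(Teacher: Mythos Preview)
Your argument is correct and self-contained. The paper, by contrast, does not prove this proposition at all: it merely extracts the statement from \cite[Proposition~4.15]{DPP3} and attributes the finiteness of $|[K,K^n]|$ to \cite[Proposition~3, p.~29]{Bki A}. So your approach is genuinely different in that you actually supply a proof where the paper only cites.

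Your route is the natural structural one: exploit that any $A\in[K,K^n]$ is a reduced Artinian $K$-algebra, hence a finite product of copies of $K$, and recover the partition from the fibers of $\mathrm{Max}(K^n)\to\mathrm{Max}(A)$. The dimension count $\dim_K A=|\mathrm{Max}(A)|=|\mathcal{P}_A|=\dim_K A_{\mathcal{P}_A}$ is the efficient way to close the loop, and your verification that $M_i\cap A=M_j\cap A$ forces $\pi_i|_A=\pi_j|_A$ (because a $K$-algebra map $A\to K$ is determined by its kernel) is exactly the point that makes this work. The seminormality and infra-integrality checks are straightforward and correct as written. What your direct argument buys over the paper's citations is transparency: one sees immediately why partitions, and not some other combinatorial object, index the subalgebras.
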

 
We now intend to extend the above result to diagonal ring extensions $\delta_n:R\hookrightarrow R^n$, for arbitrary rings $R$. We need information about some closures and give necessary conditions for the FCP or FIP properties hold. If $R\subseteq R_i,\ i=1,\ldots,n,\ n\geq 2$ are finitely many ring extensions and $\delta:R\hookrightarrow\prod_{i=1}^nR_i$ is the canonical diagonal extension, it can be factored $R \hookrightarrow R^n\hookrightarrow\prod_{i=1}^nR_i$. We can also consider that $R\hookrightarrow R^2 $ is a subextension by considering the product $R\times R\to R_1\times\prod_{i=2}^nR_i$ of the extensions $R\hookrightarrow R_1$ and $R\hookrightarrow\prod_{i=2}^nR_i$. Of course, this embedding of $R^2$ is not unique. A more complete study appears in Section 4 (see Proposition 4.6).

\begin{proposition}\label{2.2} Let $R\subseteq R_i,\ i=1,\ldots,n,\ n\geq 2$ be finitely many ring extensions, $\mathcal{R}: = \prod_{i=1}^nR_i$  and $R \subseteq \prod_{i=1}^nR_i= \mathcal{R}$ the canonical diagonal extension.  Then:

\begin{enumerate}
\item $\mathrm{Supp}(\mathcal{R}/R)=\mathrm{Spec}(R)$.

\item Assume that $R\subseteq \mathcal{R}$ has FCP (resp. FIP). Then, $R$ is an Artinian ring and each extension $R \subseteq R_i$ has FCP (resp. FIP).

\item Assume that $R\subseteq \mathcal{R}$ has FIP. Then, $R$ has finitely many ideals.
\end{enumerate}
\end{proposition}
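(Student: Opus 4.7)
The plan is to prove the three assertions in order, with parts (2) and (3) both leveraging the same construction of $R$-subalgebras of $\mathcal{R}$ from ideals of $R$. For (1), I would work with the idempotent $e_1 := (1, 0, \ldots, 0) \in \mathcal{R}$ and show that the annihilator in $R$ of its image $\overline{e_1} \in \mathcal{R}/R$ is zero: if $r \in R$ satisfies $r e_1 \in R$, then $(r, 0, \ldots, 0) = (s, s, \ldots, s)$ in $\prod R_i$ for some $s \in R$, and comparing $j$th components for $j \geq 2$ together with the injectivity of $R \hookrightarrow R_j$ forces $s = 0$, whence $r = 0$. Thus $\mathrm{Spec}(R) = \mathrm{V}_R(0) \subseteq \mathrm{Supp}_R(R\overline{e_1}) \subseteq \mathrm{Supp}_R(\mathcal{R}/R)$, and the reverse inclusion is automatic.

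For (2) and (3), the main construction is: to each ideal $I$ of $R$ associate $T_I := R + I e_1 \subseteq \mathcal{R}$, with $R$ sitting diagonally. The identity $(I e_1)^2 = I^2 e_1 \subseteq I e_1$ makes $T_I$ an $R$-subalgebra of $\mathcal{R}$, and $I \mapsto T_I$ is clearly inclusion-preserving. It is also injective: if $T_I = T_{I'}$ and $a \in I$, one writes $a e_1 = (s, \ldots, s) + b e_1$ with $s \in R$ and $b \in I'$, and uses the injectivity of $R \hookrightarrow R_j$ for $j \geq 2$ to conclude $s = 0$, whence $a = b \in I'$; by symmetry $I = I'$. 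Part (3) follows at once, and the Artinian conclusion in (2) follows by applying the same map to an infinite descending chain of ideals of $R$.

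For the remaining half of (2)---passing FCP or FIP from $R \subseteq \mathcal{R}$ to each $R \subseteq R_i$---I would use a parallel construction: to $T \in [R, R_i]$ attach
\[
U_T := R_1 \times \cdots \times R_{i-1} \times T \times R_{i+1} \times \cdots \times R_n,
\]
which contains the diagonal image of $R$ because $R \subseteq T$ and $R \subseteq R_j$ for $j \neq i$. The map $T \mapsto U_T$ is inclusion-preserving and injective (read off the $i$th factor), so any infinite chain in $[R, R_i]$, or infinite set of elements of $[R, R_i]$, lifts to a matching one in $[R, \mathcal{R}]$, contradicting FCP or FIP there. The only real subtlety anywhere is the bookkeeping between the diagonal copy of $R$ and its projections to the individual factors $R_i$, so I do not anticipate any serious obstacle beyond invoking the injectivity of $R \hookrightarrow R_j$ at the right moments.
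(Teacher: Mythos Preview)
Your proposal is correct and follows essentially the same approach as the paper: both arguments build subalgebras of the form $R + I\cdot(\text{idempotent})$ to embed the lattice of ideals of $R$ into $[R,\mathcal{R}]$, and both handle the passage to each $R\subseteq R_i$ via the obvious product subalgebra. The only cosmetic difference is that the paper first factors $R\subseteq R^2\subseteq\mathcal{R}$ and invokes Proposition~\ref{1.5} for the Artinian conclusion, whereas you argue directly inside $\mathcal{R}$; your version is self-contained but otherwise identical in content.
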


\begin{proof} 
 We have $R^2 \subseteq \prod_{i=1}^n R_i$ and $R^n \subseteq \prod_{i=1}^n R_i$.
 
(1) Let $P\in\mathrm{Spec}(R)$. Then, $R_P\neq 0$ implies $(1,0)\not\in R_P$ and $P\in\mathrm {Supp}(R^2/R)\subseteq\mathrm{Supp}(\mathcal{R}/R)$, which gives (1). Indeed, $(R^2/R)_P\cong (R_P)^2/R_P$. 

(2) Assume that $R\subseteq \mathcal{R}$ has FCP, so that $R\subseteq R^n$ has FCP. Then, $R$ is an Artinian ring in view of Proposition~\ref{1.5}. Statements about FCP or FIP are clear.

(3) Assume that $R\subseteq \mathcal{R}$ has FIP, so that $R\subseteq R^2$ has FIP. Let $I,J$ be two distinct ideals of $R$. Then, $R+(0\times I)$ and $R+(0\times J)$ are two distinct $R$-subalgebras of $R^2$. Since $R\subseteq R^2$ has FIP, it follows that $R$ has finitely many ideals.
\end{proof} 

Rings which have finitely many ideals are characterized by   D. D. Anderson and S. Chun \cite{AC}, a result that   will be often used.

\begin{proposition}\label{2.3} \cite[Corollary 2.4]{AC} A commutative ring $R$ has only finitely many ideals if and only if $R$ is a finite direct product of finite local rings, SPIRs, and fields, that are the local rings of $R$.
 \end{proposition}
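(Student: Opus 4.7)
The plan is to handle each implication separately, with the harder ``only if'' direction reducing to the local Artinian case via the classical structure theorem for Artinian rings, and then finishing with a dimension-counting argument over the residue field.

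For the ``if'' direction, I would use that the ideals of a finite product $R_1\times\cdots\times R_n$ correspond bijectively to tuples $(I_1,\ldots,I_n)$ with $I_j$ an ideal of $R_j$. Hence it suffices to verify that each listed factor type has finitely many ideals: a finite local ring is obvious; a SPIR $(R,Rt)$ of nilpotency index $p$ has exactly the $p+1$ ideals $R\supsetneq Rt\supsetneq\cdots\supsetneq (Rt)^p=0$ (using the description of elements of a SPIR recalled in the introduction); and a field has two.

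For the ``only if'' direction, suppose $R$ has finitely many ideals. Then DCC on ideals holds, so $R$ is Artinian, and the classical decomposition gives $R\cong\prod_{i=1}^n R_{M_i}$ where $M_1,\ldots,M_n$ are the maximal ideals of $R$ and each $R_{M_i}$ is local Artinian. Since ideals of the product split into tuples, each local factor inherits the finiteness property. The problem therefore reduces to showing that a local Artinian ring $(A,\mathfrak{m})$ with finitely many ideals is a finite local ring, a SPIR, or a field. Set $K:=A/\mathfrak{m}$ and let $s$ be the nilpotency index of $\mathfrak{m}$, so each $\mathfrak{m}^i/\mathfrak{m}^{i+1}$ is a finite-dimensional $K$-vector space. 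If $K$ is finite, then each such quotient is finite and the filtration $A\supsetneq\mathfrak{m}\supsetneq\cdots\supsetneq\mathfrak{m}^s=0$ forces $A$ to be finite. If $K$ is infinite, I would argue that $\dim_K(\mathfrak{m}^i/\mathfrak{m}^{i+1})\leq 1$ for every $i$: a $K$-vector space of dimension $\geq 2$ has infinitely many $K$-subspaces, each lifting to a distinct ideal of $A$ sandwiched between $\mathfrak{m}^{i+1}$ and $\mathfrak{m}^i$ (the preimages in $A$ are ideals because $\mathfrak{m}$ annihilates the quotient), contradicting the hypothesis. In particular $\dim_K(\mathfrak{m}/\mathfrak{m}^2)\leq 1$, so by Nakayama $\mathfrak{m}$ is principal; combined with nilpotency, $A$ is a field if $\mathfrak{m}=0$ and a SPIR otherwise.

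The main obstacle is the infinite-residue-field case, namely converting the global ``finitely many ideals'' hypothesis into the local dimension bound on the graded pieces $\mathfrak{m}^i/\mathfrak{m}^{i+1}$. Once that bound is in hand, Nakayama together with the nilpotency of $\mathfrak{m}$ immediately yields the SPIR-or-field classification, and the global result follows by recombining the factors via the Artinian structure theorem.
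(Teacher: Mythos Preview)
Your argument is correct. Note, however, that the paper does not supply its own proof of this proposition: it is simply quoted from \cite[Corollary~2.4]{AC}, so there is no in-paper proof to compare against. Your approach---reduce to the local Artinian case via the structure theorem, then split on whether the residue field is finite or infinite, using in the infinite case that a $K$-space of dimension $\geq 2$ has infinitely many subspaces to force $\dim_K(\mathfrak{m}/\mathfrak{m}^2)\leq 1$ and hence (Nakayama plus nilpotency) that $A$ is a field or a SPIR---is a standard and fully valid route to the result.

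One small remark: once you know $\dim_K(\mathfrak{m}/\mathfrak{m}^2)\leq 1$ in the infinite-residue-field case, the bounds on the higher graded pieces $\mathfrak{m}^i/\mathfrak{m}^{i+1}$ follow automatically (they are generated by $t^i$ if $\mathfrak{m}=At$), so you only need the dimension argument at $i=1$. This is a minor streamlining, not a gap.
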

 
 From now on, a ring $R$ with finitely many ideals is termed an FMIR and a $\Sigma$FMIR if at least a local ring of $R$ is an infinite SPIR. We also call $\Sigma$PIR an infinite SPIR. For an arbitrary ring $R$, we denote by $\Sigma\mathrm{Max}(R)$ the set of all $M\in\mathrm{Max}(R)$ such that $R_M$ is an infinite FMIR.
 
\begin{proposition}\label{2.4} Let $R\subseteq R_i,\ i=1,\ldots,n$ be finitely many ring extensions and $\mathcal{R}:=\prod_{i=1}^nR_i$. Let $\overline R_i$ (resp$.$ $\overline{\mathcal{R}}$) be the integral closure of $R$ in $R_i$ (resp$.$ $ \mathcal{R}$). Then:

\begin{enumerate}
\item $\overline{\mathcal{R}}= \prod_{i=1}^n\overline R_i$.

\item Assume that $R\subseteq R_i$ has FCP for each $i$. Then, $\overline{\mathcal{R}}\subseteq   \mathcal{R}$ has FCP (and FIP). 
\end{enumerate}
 \end{proposition}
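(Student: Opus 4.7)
For part (1), I would establish the coordinate-wise integrality criterion: a tuple $(x_1,\ldots,x_n)\in\mathcal{R}$ is integral over the diagonal image of $R$ if and only if each $x_i$ is integral over $R$ in $R_i$. The forward direction is immediate by projecting any monic equation to each coordinate. Conversely, if each $x_i$ satisfies a monic polynomial $p_i(T)\in R[T]$, then $(x_1,\ldots,x_n)$ annihilates the monic polynomial $\prod_{i=1}^n p_i(T)\in R[T]$, evaluated in $\mathcal{R}$ via the diagonal inclusion $R\hookrightarrow\mathcal{R}$. This gives $\overline{\mathcal{R}}=\prod_{i=1}^n\overline R_i$.

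For part (2), the strategy is to reduce to the individual factors and then recombine via a product decomposition of intermediate algebras. First, each subextension $\overline R_i\subseteq R_i$ inherits FCP from $R\subseteq R_i$ and, by construction of the integral closure, is itself integrally closed. Standard FCP theory---namely, that an integrally closed FCP extension is a composite of finitely many minimal flat epimorphisms and consequently enjoys FIP, via the results collected in \cite{DPP2}---yields FIP for each $\overline R_i\subseteq R_i$. Second, for any family of ring extensions $A_i\subseteq B_i$, the orthogonal idempotents $e_j=(0,\ldots,0,1,0,\ldots,0)\in\prod A_i$ lie in every $T\in[\prod A_i,\prod B_i]$. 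Setting $T_j:=\{b\in B_j\mid(0,\ldots,0,b,0,\ldots,0)\in T\}$, with $b$ in position $j$, one verifies that $T_j\in[A_j,B_j]$ and that $T=\prod_{j=1}^n T_j$. Hence $T\mapsto(T_1,\ldots,T_n)$ is an order-preserving bijection $[\prod A_i,\prod B_i]\longleftrightarrow\prod_{i=1}^n[A_i,B_i]$. Applied with $A_i=\overline R_i$ and $B_i=R_i$ and combined with part (1), the FIP of each factor yields the FIP of $\overline{\mathcal{R}}\subseteq\mathcal{R}$, and a fortiori its FCP.

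The principal difficulty lies in invoking the step that an integrally closed FCP extension has FIP; beyond that, the argument is formal, using only the orthogonal idempotents of the product and the elementary fact that integrality over the diagonal copy of $R$ is checked coordinate by coordinate.
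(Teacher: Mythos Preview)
Your proof is correct and follows essentially the same route as the paper's: for (1) the paper simply cites Bourbaki \cite[Proposition~9, ch.~V, p.~16]{Entiers}, while you give the standard direct argument; for (2) the paper invokes \cite[Theorem~3.13]{DPP2} to pass FCP to $\overline R_i\subseteq R_i$, then \cite[Theorem~6.3]{DPP2} for the equivalence of FCP and FIP in the integrally closed case, and finally \cite[Proposition~III.4]{DMPP} for the product decomposition of intermediate algebras---which is exactly your idempotent argument written out by hand. The only cosmetic difference is that the paper concludes FCP first and deduces FIP from integral closedness of the product extension, whereas you obtain FIP directly from the bijection and infer FCP; both orderings are fine.
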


\begin{proof} (1) is \cite[Proposition 9, ch. V, p. 16]{Entiers}. 

(2) Assume that $R\subseteq R_i$ has FCP for each $i$. In view of \cite[Theorem 3.13]{DPP2}, we get that $\overline R_i\subseteq R_i$  has FCP for each $i$. This extension has also FIP since FCP and FIP are equivalent for an integrally closed extension \cite[Theorem 6.3]{DPP2}. Now, use \cite[Proposition III.4]{DMPP}, to get that $\prod_{i=1}^n\overline R_ i \subseteq \prod_{i=1}^n R_i$ has FCP (and then FIP because integrally closed).
  \end{proof} 

\begin{corollary}\label{2.5} Let $R\subseteq R_1$ and $R\subseteq R_2$ be two integrally closed extensions. Then, $R\subseteq R_1\times R_2$ has FCP (resp$.$ FIP) if and only if each $R\subseteq R_i$  has  FCP  and $R$ is  Artinian (resp$.$ an FMIR). 

In particular,   $R\subseteq R^2$ has FIP if and only if $R$  is an FMIR. 
 \end{corollary}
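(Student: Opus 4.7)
The forward direction is Proposition~\ref{2.2}: if $R\subseteq R_1\times R_2$ has FCP (resp.\ FIP), part~(2) gives that $R$ is Artinian and each $R\subseteq R_i$ has FCP, and part~(3) strengthens ``Artinian'' to ``FMIR'' in the FIP case.

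For the converse I would factor the extension through the integral closure of $R$ as $R\subseteq R^2\subseteq R_1\times R_2$. The integrally closed hypothesis on each $R\subseteq R_i$, combined with Proposition~\ref{2.4}(1), identifies the integral closure of $R$ in $R_1\times R_2$ as $\overline{R_1}\times\overline{R_2}=R\times R=R^2$. Proposition~\ref{2.4}(2) then yields that $R^2\subseteq R_1\times R_2$ has FCP, hence also FIP since that extension is integrally closed.

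For the lower layer $R\subseteq R^2$, the key observation is that $R^2=R+R(0,1)$ is monogenic over $R$ and that $(R:R^2)=0$: indeed, $(r,r)(a,b)=(ra,rb)\in R$ for every $(a,b)\in R^2$ forces $r(a-b)=0$ for all $a,b\in R$, hence $r=0$. Proposition~\ref{1.4} then supplies a bijection between $[R,R^2]$ and the set of ideals of $R$, so that $R\subseteq R^2$ has FCP whenever $R$ is Artinian (a fact also recorded in Proposition~\ref{1.5}) and has FIP precisely when $R$ is an FMIR.

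It remains to glue the two layers by the standard composition principle for FCP and FIP (from \cite{DPP2}, already invoked in Proposition~\ref{1.6}(3)) to obtain FCP (resp.\ FIP) for $R\subseteq R_1\times R_2$. The ``in particular'' statement then drops out by specializing to $R_1=R_2=R$, for which the integrally closed and FCP hypotheses are trivial. The main delicate point is FIP composition, since the natural map $U\mapsto (U\cap R^2,\,U\cdot R^2)$ from $[R,R_1\times R_2]$ to $[R,R^2]\times[R^2,R_1\times R_2]$ need not be injective in general; however, our setting is within the scope of the DPP2 composition results, so this obstacle is handled by reference.
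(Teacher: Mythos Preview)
Your proof is correct and follows essentially the same route as the paper: both factor $R\subseteq R_1\times R_2$ through $R^2$ (the integral closure, by Proposition~\ref{2.4}(1)), handle the upper layer $R^2\subseteq R_1\times R_2$ via Proposition~\ref{2.4}(2), the lower layer $R\subseteq R^2$ via Proposition~\ref{1.4} (and Proposition~\ref{1.5} for FCP), and glue using the composition theorem at the integral closure from \cite{DPP2}. One small correction: the composition principle you want is \cite[Theorem~3.13]{DPP2} (splitting at the integral closure), not the result invoked in Proposition~\ref{1.6}(3), which is \cite[Theorem~5.8]{DPP2} and concerns the seminormalization/$t$-closure split.
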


\begin{proof} One implication is obvious, since any $R$-subalgebra $S_1$ of $R_1$ yields an $R$-subalgebra $S_1\times R_2$ of $R_1\times R_2$. Then, use Proposition~\ref{2.2}.

Conversely, assume that $R\subseteq R_1$ and $R\subseteq R_2$ have both FCP (and then FIP) and that $R$ is  Artinian. Then, $R^2\subseteq R_1\times R_2$ has FCP (resp$.$ FIP) by Proposition~\ref{2.4}. Moreover, $R^2\subseteq R_1\times R_2$ is integrally closed and $R\subseteq R^2$ is an integral extension. In view of Proposition~\ref{1.5}, it follows that $R\subseteq R^2$ and so $R\subseteq  R_1\times R_2$ have FCP by \cite[Theorem 3.13]{DPP2}.

Now, assume that $R\subseteq R_1$ and $R\subseteq R_2$ have both FIP and that $R$ is an FMIR.  By Proposition~\ref{1.4}, $R\subseteq R^2$ as well as $R\subseteq R_1\times R_2$ have FIP by \cite[Theorem 3.13]{DPP2}.
\end{proof} 

\begin{proposition}\label{2.6} Let $R\subseteq R_i,\ i=1,\ldots,n$, be finitely many integral extensions,  $S_i:= \substack{+\\ R_i}R,\ T_i:= \substack{t\\ R_i}R$ for each $i$,  $\mathcal{R}:= \prod_{i=1}^n R_i,\ \mathcal{S}:= \prod_{i=1}^n S_i$ and $\mathcal T : = \prod_{i=1}^n T_i$. Then:

\begin{enumerate}
\item  $\substack{+\\ \mathcal{R}}R=\substack{+\\ \mathcal{S}}R$ and $\substack{t\\ \mathcal{R}}R=\mathcal T$. 

\item If each $T_i\subseteq R_i$ has FCP (resp$.$ FIP), then $\substack{t\\ \mathcal{R}}R\subseteq  \mathcal{R}$ has FCP (resp$.$ FIP).  This holds if each $R\subseteq R_i$ has FCP (resp$.$ FIP). 
\end{enumerate}
 \end{proposition}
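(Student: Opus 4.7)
The plan is to settle (1) by exploiting the two dual characterizations of $\substack{+\\ \mathcal{R}}R$ and $\substack{t\\ \mathcal{R}}R$ recalled in the excerpt (greatest subintegral, resp.\ infra-integral, subextension versus smallest seminormal, resp.\ t-closed, overextension), and to dispatch (2) via the same componentwise-product tool, \cite[Proposition III.4]{DMPP}, already invoked in the proof of Proposition~\ref{2.4}.

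For (1) the preparatory step is to verify two componentwise preservation facts. First, $\mathcal{T}\subseteq\mathcal{R}$ is t-closed: if $b=(b_i)\in\mathcal{R}$ and $r=(r_i)\in\mathcal{T}$ satisfy $b^2-rb,\,b^3-rb^2\in\mathcal{T}$, the corresponding identities hold in each $R_i$, so $b_i\in T_i$ by t-closedness of $T_i\subseteq R_i$, whence $b\in\mathcal{T}$. The same coordinatewise check, applied to the defining condition with $r$ absent, shows that $\mathcal{S}\subseteq\mathcal{R}$ is seminormal. On the opposite side, each $R\subseteq T_i$ and each $R\subseteq S_i$ is infra-integral by the very definitions of t-closure and seminormalization, so the remark following Definition~\ref{1.3} (products of infra-integral extensions are infra-integral) yields that both $R\subseteq\mathcal{T}$ and $R\subseteq\mathcal{S}$ are infra-integral.

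Now $\substack{t\\ \mathcal{R}}R=\mathcal{T}$ falls out immediately: $\mathcal{T}\subseteq\substack{t\\ \mathcal{R}}R$ by the greatest-infra-integral description, and $\substack{t\\ \mathcal{R}}R\subseteq\mathcal{T}$ by the smallest-t-closed description. For the seminormalization equality, the seminormality of $\mathcal{S}\subseteq\mathcal{R}$ forces $\substack{+\\ \mathcal{R}}R\subseteq\mathcal{S}$; since $R\subseteq\substack{+\\ \mathcal{R}}R$ is subintegral by the greatest-subintegral description, applying that same description now inside $\mathcal{S}$ gives $\substack{+\\ \mathcal{R}}R\subseteq\substack{+\\ \mathcal{S}}R$, and the converse inclusion is trivial from $\mathcal{S}\subseteq\mathcal{R}$.

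For (2) I would invoke \cite[Proposition III.4]{DMPP} exactly as in Proposition~\ref{2.4}: if each $T_i\subseteq R_i$ has FCP (resp.\ FIP), then $\mathcal{T}=\prod T_i\subseteq\prod R_i=\mathcal{R}$ inherits the same property. The final clause follows since the property passes to subextensions, so FCP (resp.\ FIP) for $R\subseteq R_i$ transfers to $T_i\subseteq R_i$. The one subtlety to flag is that the seminormalization half of (1) cannot be argued by asserting that $R\subseteq\mathcal{S}$ is subintegral, since the excerpt explicitly warns that a product of subintegral extensions need not be subintegral; routing that step through the smallest-seminormal characterization of $\substack{+\\ \mathcal{R}}R$ rather than through a product-of-subintegrals shortcut is what sidesteps the obstruction and constitutes the main conceptual point of the argument.
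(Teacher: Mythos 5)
Your proof is correct and follows essentially the same route as the paper's: both establish that $\mathcal{T}\subseteq\mathcal{R}$ is t-closed and $R\subseteq\mathcal{T}$ is infra-integral to identify the t-closure, both use the seminormality of $\mathcal{S}\subseteq\mathcal{R}$ to squeeze $\substack{+\\ \mathcal{R}}R$ into $\mathcal{S}$ and then conclude via the extremal (greatest-subintegral/smallest-seminormal) characterizations, and both reduce (2) to the componentwise product criterion for FCP/FIP. The only cosmetic differences are that the paper cites \cite[Lemma 5.6]{GP1} for t-closedness of $\mathcal{T}\subseteq\mathcal{R}$ and recomputes the residual extensions of $R\subseteq\mathcal{T}$ at primes rather than invoking the remark after Definition~\ref{1.3}, and it quotes \cite[Proposition 3.7(d)]{DPP2} where you quote \cite[Proposition III.4]{DMPP}.
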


\begin{proof} (1)  Obviously, $\substack{+\\ \mathcal{S}}R\subseteq\substack{+\\ \mathcal{R}}R$ and is subintegral. Moreover, $\mathcal{S}\subseteq\mathcal{R}$ is seminormal, since so are each $S_i\subseteq R_i$. Then, $\mathcal{S}\in[\substack{+\\ \mathcal{R}}R,\mathcal{R}]$, with $\substack{+\\ \mathcal{R}}R\subseteq\mathcal{S}$ seminormal, so that $\substack{+\\ \mathcal{S}}R\subseteq\substack{+\\ \mathcal{R}}R$ is also seminormal, then an equality.

We know that $\prod_{i=1}^n T_i\subseteq\prod_{i=1}^n R_i$ is $t$-closed \cite[Lemma 5.6]{GP1}. To conclude, it is enough to show that $R \subseteq \prod_{i=1}^n T_i$ is infra-integral.

The prime ideals of $\prod_{i=1}^nT_i$ are the $P_i\times\prod_{j=1,j\neq i}^nT_j$, where $P_i$ is a prime ideal of $T_i$. For $P_i\in\mathrm{Spec}(T_i)$, set $Q _i:=P_i\cap R$. Then, $(\prod_{i=1}^nT_i)/(P_i\times\prod_{j=1,j\neq i}^nT_j)\cong T_i/P_i\cong R/Q_i$, since $R\subseteq T_i$ is infra-integral. It follows that $R\subseteq \prod_{i=1}^nT_i$ is infra-integral

(2)  In view of \cite[Proposition 3.7(d)]{DPP2}, we get that $\prod_{i=1}^nT_ i=\substack{t\\ \mathcal{R}}R \subseteq\mathcal{R}$ has FCP (resp$.$ FIP). There was a misprint in the statement of \cite[Proposition 3.7(d)]{DPP2}, where we should read: If $R=R_1\times\cdots\times R_n$ is a finite product of rings and $R\subseteq S$ satisfies FCP, then $S$ can be identified with a product of rings $S_1\times\cdots\times S_n$ where $R_i \subseteq S_i$ for each $i$. Then $\ell[R,S]=\sum_{i=1}^n \ell[R_i,S_i]$.
\end{proof} 

The next proposition  and Proposition~\ref{2.2} enables us to reduce our study to quasi-local rings.

\begin{proposition}\label{2.7} \cite[Proposition 3.7 and Corollary 3.2]{DPP2} Let $R\subseteq S$ be a ring extension.

\begin{enumerate}

\item If $R\subseteq S$ has FCP (FIP), then  $|\mathrm {Supp}(S/R)|<\infty$.

\item If  $|\mathrm {MSupp}(S/R)|<\infty$, then $R\subseteq S$ has FCP (FIP) if and only if $R_M\subseteq S_M$ has FCP (FIP) for each $M\in\mathrm {MSupp}(S/R)$.

\end{enumerate}
\end{proposition}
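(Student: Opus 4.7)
The plan is to exploit the fact that, for any minimal extension $A \subset B$, Theorem~\ref{1.1}(a) gives a unique crucial maximal ideal $M \in \mathrm{Max}(A)$ such that $\mathrm{Supp}_A(B/A) = \{M\}$. For part (1), I would use that an FCP extension $R \subseteq S$ admits some finite maximal chain $R = R_0 \subset R_1 \subset \cdots \subset R_n = S$ of minimal extensions (the key connection recalled in the introduction). Let $M_i$ denote the crucial ideal of $R_i \subset R_{i+1}$ and set $\mathfrak{p}_i := M_i \cap R$. From the short exact sequence $0 \to R_i/R \to R_{i+1}/R \to R_{i+1}/R_i \to 0$ and the fact that $\mathrm{Supp}_R(R_{i+1}/R_i) = \{\mathfrak{p}_i\}$, an induction on $i$ yields $\mathrm{Supp}_R(S/R) \subseteq \{\mathfrak{p}_0,\ldots,\mathfrak{p}_{n-1}\}$, a finite set. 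Since FIP implies FCP, the same conclusion holds in the FIP case.

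For part (2), the ``only if'' direction is the standard observation that, for any multiplicatively closed set $\Sigma \subseteq R$, localization induces an order-preserving surjective map $[R,S] \to [\Sigma^{-1}R, \Sigma^{-1}S]$, $T \mapsto \Sigma^{-1}T$; hence FCP (respectively FIP) of $R \subseteq S$ passes to $R_M \subseteq S_M$ for every maximal ideal $M$.

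The substantive direction is ``if''. Writing $\{M_1,\ldots,M_k\} := \mathrm{MSupp}_R(S/R)$, I would consider the map
\[
\Phi : [R,S] \longrightarrow \prod_{i=1}^{k} [R_{M_i}, S_{M_i}], \qquad T \mapsto (T_{M_i})_{i=1}^{k},
\]
and prove that it is injective. Indeed, if $(T_1)_{M_i} = (T_2)_{M_i}$ for every $i$, then $(T_1)_M = (T_2)_M$ for all $M \in \mathrm{Max}(R)$: for $M \in \{M_1,\ldots,M_k\}$ this is the hypothesis, while for $M \notin \mathrm{MSupp}_R(S/R)$ one has $R_M = S_M$, forcing $(T_1)_M = R_M = (T_2)_M$. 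Since equality of the $R$-submodules $T_1, T_2$ of $S$ can be tested locally at maximal ideals, $T_1 = T_2$. Under the FIP hypothesis on each $R_{M_i} \subseteq S_{M_i}$, $\Phi$ embeds $[R,S]$ into a finite set, so $R \subseteq S$ has FIP. For FCP, any strictly ascending chain in $[R,S]$ must, at every step, strictly increase in at least one coordinate of $\Phi$ (again by the injectivity argument), so its total length is bounded by $\sum_{i=1}^{k} \ell[R_{M_i}, S_{M_i}] < \infty$.

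The main obstacle is the FCP half of the ``if'' direction: one must verify carefully that every strict inclusion in $[R,S]$ descends to a strict inclusion in at least one localization, in order to bound the total chain length by the (finite) sum of local chain lengths. The FIP half and the ``only if'' direction are essentially formal, and part (1) reduces cleanly to the single-crucial-ideal structure of minimal extensions supplied by Theorem~\ref{1.1}.
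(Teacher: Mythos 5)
The paper does not actually prove this proposition: it is imported verbatim from \cite[Proposition 3.7 and Corollary 3.2]{DPP2}, so there is no internal argument to compare yours against. Your part (2) is the standard argument and is essentially sound; the only soft spot is the FCP half of the ``if'' direction, where you bound chain lengths by $\sum_i\ell[R_{M_i},S_{M_i}]<\infty$. The hypothesis that every chain in $[R_{M_i},S_{M_i}]$ is finite does not by itself yield a finite supremum of chain lengths (that FCP implies $\ell<\infty$ is itself a theorem of \cite{DPP2}); it is cleaner to say that an infinite chain of $[R,S]$ injects, via $\Phi$, into the product of its (chain) projections, so at least one projection must be an infinite chain in some $[R_{M_i},S_{M_i}]$.

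Part (1), however, has a genuine gap. For a minimal extension $A\subset B$ with crucial ideal $M$, Theorem~\ref{1.1}(a) gives $\mathrm{Supp}_A(B/A)=\{M\}$ as a subset of $\mathrm{Spec}(A)$; but pushing this down to $\mathrm{Spec}(R)$ along $R\subseteq A=R_i$ yields $\mathrm{Supp}_R(R_{i+1}/R_i)={\mathrm V}_R(M_i\cap R)$, not $\{M_i\cap R\}$: for $P\in\mathrm{Spec}(R)$, the module $(R_{i+1}/R_i)\otimes_{R_i}(R_i)_{R\setminus P}$ is nonzero exactly when $M_i$ survives in $(R_i)_{R\setminus P}$, i.e.\ when $M_i\cap R\subseteq P$. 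The two sets agree only when $M_i\cap R$ is maximal in $R$, which is automatic if $R\subseteq R_i$ is integral but can fail along flat-epimorphic steps. Concretely, let $R$ be a two-dimensional valuation domain with primes $0\subset P\subset M$ and quotient field $K$; then $[R,K]=\{R,R_P,K\}$ is finite, $R\subset R_P\subset K$ is a chain of minimal flat epimorphisms, the crucial ideal of $R_P\subset K$ contracts to $P\notin\mathrm{Max}(R)$, and $\mathrm{Supp}_R(K/R_P)=\{P,M\}\neq\{P\}$. So the induction does not close as written: to bound $\mathrm{Supp}_R(S/R)$ you must control all of ${\mathrm V}_R(M_i\cap R)$, which requires the spectral information of Theorem~\ref{1.1}(d) (iterating the bijection $\mathrm{Spec}(R_{j+1})\setminus{\mathrm V}(M_jR_{j+1})\to\mathrm{Spec}(R_j)\setminus\{M_j\}$ down the chain to show that the extra primes of ${\mathrm V}_R(M_i\cap R)$ are among the earlier contractions $\mathfrak{p}_0,\dots,\mathfrak{p}_{i-1}$). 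Without that supplementary argument the step ``$\mathrm{Supp}_R(R_{i+1}/R_i)=\{\mathfrak{p}_i\}$'' is simply false, even though the conclusion it is meant to deliver is correct.
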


\begin{proposition}\label{2.8} Let $R\subseteq R_i,\ i=1,\ldots,n$, be finitely many subintegral  extensions and $\mathcal R: = \prod_{i=1}^n R_i$, where $(R,M)$ is a quasi-local ring. Then:

\begin{enumerate}
\item Each $R_i$ is a quasi-local ring with $\{N_i\}: = \mathrm{Max}(R_i)$ and $R\subseteq \mathcal R$ is infra-integral. 

\item  Set $ N:=\prod_{i=1}^nN_i$ and $S:=R+N$. Then $(S, N)$ is a quasi-local ring and $\mathrm{Spec}(S)=\{P'_i\times\prod_{j=1,j\neq i}^nN_j\mid P'_i\in\mathrm{Spec}(R_i),i=1,\ldots,n\}$. In particular, $R\subseteq S$ is infra-integral and $ \substack{+\\\mathcal R}R \subseteq S$.                  

\item Assume $\dim(R)=0$. Then, $\substack{+\\\mathcal R}R =S$. 

\item If each $R_i$ is a Noetherian ring and a f.g$.$  $R$-module, then $S$ is a f.g$.$ $R$-module. 
\end{enumerate}
 \end{proposition}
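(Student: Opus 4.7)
The plan is to treat the four parts in sequence, each resting on Theorem~\ref{1.1} and the standard behaviour of subintegral and infra-integral extensions. For (1), subintegrality of $R \subseteq R_i$ makes $\mathrm{Spec}(R_i) \to \mathrm{Spec}(R)$ bijective, so since $R$ has the unique maximal ideal $M$ there is exactly one prime $N_i$ of $R_i$ above $M$; integrality implies that the maximal ideals of $R_i$ are precisely those lying over $M$, whence $\{N_i\} = \mathrm{Max}(R_i)$. The infra-integrality of $R \hookrightarrow \mathcal R$ is the remark following Definition~\ref{1.3}.

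For (2), the identification $S/N = (R+N)/N \cong R/(R \cap N)$ combined with $R \cap N = \bigcap_i (R \cap N_i) = M$ gives $S/N \cong R/M =: k$, a field, so $N \in \mathrm{Max}(S)$. To describe $\mathrm{Spec}(S)$ I would contract each prime $\mathfrak q = P' \times \prod_{j \ne i} R_j$ of $\mathcal R$ (with $P' \in \mathrm{Spec}(R_i)$) to $S$: writing $s = r + (n_1, \ldots, n_n) \in S$, the condition $s \in \mathfrak q$ reads $r + n_i \in P' \subseteq N_i$, which forces $r \in N_i \cap R = M \subseteq N_j$ and hence $r + n_j \in N_j$ for every $j \ne i$; consequently $\mathfrak q \cap S = P' \times \prod_{j \ne i} N_j$. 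Integrality of $S \subseteq \mathcal R$ makes $\mathrm{Spec}(\mathcal R) \twoheadrightarrow \mathrm{Spec}(S)$, giving the claimed description, and taking $P' = N_i$ shows that $N$ is the only maximal ideal of $S$. Infra-integrality of $R \subseteq S$ is inherited from $R \subseteq \mathcal R$: for $P \in \mathrm{Spec}(S)$ with $Q \in \mathrm{Spec}(\mathcal R)$ over $P$, the field injections $k(P \cap R) \hookrightarrow k(P) \hookrightarrow k(Q)$ compose to an isomorphism, forcing each factor to be an isomorphism.

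For the inclusion $\substack{+\\ \mathcal R}R \subseteq S$, set $B := \substack{+\\ \mathcal R}R$ and consider $\pi : \mathcal R \twoheadrightarrow \prod_{i=1}^n R_i/N_i \cong k^n$. Since $R \subseteq B$ is subintegral and $R$ is quasi-local, $B$ is quasi-local with unique maximal ideal $M_B$ and $B/M_B \cong k$; each of the $n$ maximal ideals $N_i \times \prod_{j \ne i} R_j$ of $\mathcal R$ contracts to a maximal ideal of $B$ by integrality, which must be $M_B$, so intersecting gives $N \cap B = M_B$ and hence $\pi(B) \cong k$. On the other hand $\pi(B) \supseteq \pi(R)$, the diagonal copy of $k$ in $k^n$, so $\pi(B)$ equals this diagonal. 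Thus each $b \in B$ admits $r \in R$ with $b_i - r \in N_i$ for all $i$, i.e.\ $b \in R + N = S$. Part (3) is then immediate from (2): $R \subseteq S$ is infra-integral, and when $\dim R = 0$, subintegrality of $R \subseteq R_i$ forces $\dim R_i = 0$, so $\mathrm{Spec}(R_i) = \{N_i\}$ and the formula collapses to $\mathrm{Spec}(S) = \{N\}$; hence $R \subseteq S$ is subintegral, giving the reverse inclusion $S \subseteq \substack{+\\ \mathcal R}R$. Finally, (4) is straightforward: each $N_i$ is f.g.\ over the Noetherian ring $R_i$ and hence over $R$ (as $R_i$ is f.g.\ over $R$), so $N = \prod_i N_i$ and $S = R + N$ are f.g.\ over $R$.

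The main technical point is the containment $\substack{+\\ \mathcal R}R \subseteq S$: one must exploit that the seminormalization of $R$ inherits quasi-locality, pinning its image in $k^n$ down to the diagonal copy of the residue field.
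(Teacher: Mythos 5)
Your proof is correct, and for parts (1), (3), (4) and for the computation of $\mathrm{Spec}(S)$ it runs essentially parallel to the paper's argument. The one step where you take a genuinely different route is the containment $\substack{+\\ \mathcal{R}}R \subseteq S$. The paper proves it ``from above'': it observes that $N=(S:\mathcal{R})$ is a common ideal of $S$ and $\mathcal{R}$, that $S/N\subseteq\mathcal{R}/N$ is a copy of $k\subseteq k^n$ and hence seminormal by Proposition~\ref{2.1}, so that $S\subseteq\mathcal{R}$ is seminormal and the minimality of the seminormalization gives the inclusion. You prove it ``from below'': writing $B:=\substack{+\\ \mathcal{R}}R$, you use the subintegrality of $R\subseteq B$ to see that $B$ is quasi-local with $N\cap B=M_B$, so that the image of $B$ under $\pi:\mathcal{R}\to\mathcal{R}/N\cong k^n$ is a field that must coincide with the diagonal image of $R$, forcing $B\subseteq R+N=S$. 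Your version is more self-contained (it needs only the definition of subintegrality, not the seminormality of $k\subseteq k^n$ nor the descent of seminormality along a common ideal), while the paper's version yields as a by-product the extra fact that $S\subseteq\mathcal{R}$ is seminormal. One small caution: the equality $\pi(B)=\pi(R)$ does not follow merely from both fields being abstractly isomorphic to $k$ (compare $\mathbb{F}_p(t^p)\subset\mathbb{F}_p(t)$); it requires identifying the inclusion $\pi(R)\hookrightarrow\pi(B)$ with the residue extension $R/M\to B/M_B$, which is an isomorphism precisely because $R\subseteq B$ is subintegral. You have all the needed facts in hand, but that ``so'' deserves one more explicit sentence.
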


\begin{proof} (1) $R_i$ is quasi-local since $R\subseteq R_i$ is subintegral (Definition~\ref{1.3}). Now, an  arbitrary prime ideal of $\mathcal R$ is of the form $P':=P'_i\times\prod_{j=1,j\neq i}^nR_j$, for some $i$ and $P'_i\in\mathrm{Spec}(R_i)$. Setting $P:=P'\cap R$, we see that $P=P'_i\cap R$. From $\mathcal R/P'\cong R_i/P'_i\cong R/P$, since $R\subseteq R_i$ is subintegral, we deduce that $R\subseteq\mathcal R$ is infra-integral. 

(2) The ideals $N'_i:=N_i\times\prod_{j=1,j\neq i}^nR_j$ are the maximal ideals of $\mathcal R$, for $i\in\{1,\ldots,n\}$, and they all lie over $M$.  Observe that $S$ is an $R$-subalgebra of $\mathcal R$. From $N\cap R=M$, we infer that $S/N\cong R/M$ and that $N \in \mathrm{Max}(S)$. Since $R\subseteq\mathcal R$ is an integral extension, so is $S\subseteq\mathcal R$. Moreover, each $N'_i$ lies over $N$. Hence $(S,N)$ is a quasi-local ring. 

Let $Q\in\mathrm{Spec}(S)$, there is some $P\in\mathrm{Spec}(\mathcal R)$ lying over $Q$,  of the form  $P:=P'_i\times\prod_{j= 1,j\neq i}^nR_j$, for some $P'_i\in\mathrm{Spec}(R_i)$. Since $Q\subseteq N$, we get $Q\subseteq(P'_i\times\prod_{j=1,j\neq i}^nR_j)\cap (\prod_{k=1}^nN_k)=P'_i\times\prod_{j=1,j\neq i}^nN_j\subseteq S\cap P=Q$, so that $Q=P'_i\times \prod_{j=1,j\neq i}^nN_j$. Conversely, any ideal of the form $P'_i\times\prod_{j=1,j\neq i}^nN_j$, for some $i$ and  $P'_i\in\mathrm{Spec}(R_i)$ is in $\mathrm{Spec}(S)$, since $P'_i\times \prod_{j=1,j\neq i}^nR_j$ lies over it. 

Since $R\subseteq S$ is a subextension of $R\subseteq\mathcal R$, (1) entails that $R\subseteq S$ is infra-integral. But $\prod_{i=1}^nN_i$ is also an ideal of $\mathcal R$, so that $N=(S:\mathcal R)$. To end, $\mathcal R/N\cong(R/M)^n$ and $S/N\cong R/M$ give that $S/N\subseteq\mathcal R/N$ is seminormal by Proposition~\ref{2.1}, and so is $S\subseteq\mathcal R$. Then, $\substack{+\\\mathcal R}R \subseteq S$.

(3) Assume $\dim(R)=0$. Then, $\mathrm{Spec}(S)=\{\prod_{i=1}^nN_i\}=\{N\}$. Then $S/N\cong R/M$ shows that $R\subseteq S$ is a subintegral extension and $S= \substack{+\\\mathcal R}R $.

(4) If each $R_i$ is  Noetherian and f.g$.$ over $R$, then, each $N_i$ is a f.g$.$ $R_i$-module, and also a f.g$.$ $R$-module. Hence, $R+N$ is a f.g$.$ $R$-module.
\end{proof} 

\begin{remark}\label{2.9} Contrary to the t-closure, the seminormalization of a diagonal morphism is not the product of the seminormalizations. We can compare these results with \cite[Lemma 5.6]{GP1}, which says that seminormalization and t-closure commute with finite products of morphisms. 
\end{remark}

\begin{proposition}\label{2.10} Let $R\subseteq R_i,\ i=1,\ldots,n$ be finitely many  integral extensions and $\mathcal R := \prod_{i=1}^nR_i$, where $(R,M)$ is a quasi-local ring. Then:

\begin{enumerate}
\item  $\substack{t\\\mathcal R}R \subseteq \mathcal R$ has FCP (resp$.$FIP) if each $R\subseteq R_i$ has.

\item If $\dim(R)=0$ and each $R\subseteq R_i$ has FCP, then, $\substack{+\\\mathcal R}R \subseteq \substack{t\\\mathcal R}R $  has FIP.

\item If $\dim(R)=0$ and  each  $R\subseteq R_i$ has FCP (resp$.$ FIP), then $R\subseteq \mathcal R$ has FCP (resp$.$ FIP) if and only if $R\subseteq{}_{\mathcal R}^+R$ has FCP (resp$.$ FIP). 
\end{enumerate}
 \end{proposition}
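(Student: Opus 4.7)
The strategy is to analyse the canonical decomposition $R\subseteq B\subseteq T\subseteq\mathcal R$ with $B:=\substack{+\\\mathcal R}R$ and $T:=\substack{t\\\mathcal R}R$. By Proposition~\ref{2.6}(1), $T=\prod_{i=1}^nT_i$ and $B=\substack{+\\\mathcal S}R$, where $\mathcal S=\prod_{i=1}^nS_i$ and $S_i:=\substack{+\\R_i}R$. Each $R\subseteq S_i$ is subintegral, so each $S_i$ is quasi-local, and with $\dim R=0$, Proposition~\ref{2.8}(2)--(3) applied to the family $\{R\subseteq S_i\}$ identifies $B=R+\prod\mathrm{Max}(S_i)$ as a quasi-local ring with residue field $R/M$. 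An integral FCP extension is module-finite (by Theorem~\ref{1.1}, since every integral minimal extension is of type~(b)), and the FCP hypothesis forces $R$ Noetherian which, combined with $\dim R=0$, makes $R$ Artinian; thus $\mathcal R$, $T$, and $B$ are f.g.\ $R$-modules and are themselves Artinian. Part~(1) then coincides with Proposition~\ref{2.6}(2), the assumption transferring from $R\subseteq R_i$ to $T_i\subseteq R_i$ through the canonical decomposition.

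For part~(2), the extension $B\subseteq T$ is finite (both are f.g.\ $R$-modules), seminormal (since $B\subseteq\mathcal R$ is seminormal by definition of $B$, and seminormality descends to a subextension with the same lower ring), and infra-integral (since $R\subseteq T$ is infra-integral by definition of $T$, and this passes to an intermediate pair $B\subseteq T$). If $R/M$ is infinite, then $B$ is local Artinian with infinite residue field $R/M$, and Proposition~\ref{1.6}(2) applied to $B\subseteq T$ yields FIP. If instead $R/M$ is finite, then the local Artinian ring $R$ is itself a finite set, whence the f.g.\ $R$-module $T$ is finite, so $[B,T]$ is trivially finite.

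For part~(3), one first observes that $B=\substack{+\\T}R$: because $T\subseteq\mathcal R$ is t-closed (hence seminormal), any $C\in[R,T]$ with $C\subseteq T$ seminormal is also seminormal in $\mathcal R$, forcing $C\supseteq B$; the reverse inclusion follows from $B\subseteq T$ being seminormal (part~(2)). When $R/M$ is infinite, $R\subseteq T$ is finite and infra-integral, so Proposition~\ref{1.6}(3) gives FIP of $R\subseteq T$ iff FIP of $R\subseteq B$; combining this with part~(1) (that $T\subseteq\mathcal R$ has FCP, resp.\ FIP) and with FCP/FIP transitivity for finite integral extensions (in the spirit of \cite[Theorem 3.13]{DPP2} and the argument used in the proof of Proposition~\ref{1.6}(3)), one concludes that $R\subseteq\mathcal R$ has FCP (resp.\ FIP) if and only if $R\subseteq B$ does. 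When $R/M$ is finite, $R$ and a fortiori $\mathcal R$ are finite sets and the equivalence is automatic. The main obstacle is the FIP half of this transitivity step, since FIP is not transitive for arbitrary extensions; it is rescued here by the Artinian finite integral structure and by the tailor-made Proposition~\ref{1.6}(3) handling the infra-integral part of the decomposition.
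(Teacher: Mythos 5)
Your reduction to the canonical decomposition $R\subseteq B\subseteq T\subseteq\mathcal R$ is the right frame, and parts of the argument (seminormality and infra-integrality of $B\subseteq T$, the identification $B=R+\prod N_i$ via Proposition~\ref{2.8}, the use of Proposition~\ref{1.6}(3) for the infra-integral piece in (3)) are sound. But there is a genuine gap at the point where you assert that ``the FCP hypothesis forces $R$ Noetherian which, combined with $\dim R=0$, makes $R$ Artinian.'' The hypothesis is only that each $R\subseteq R_i$ has FCP, and this does \emph{not} imply $R$ is Noetherian: take $R$ any non-Noetherian zero-dimensional quasi-local ring and $R_i=R$ (or any minimal extension of $R$); each $R\subseteq R_i$ trivially has FCP. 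By \cite[Theorem 4.2]{DPP2} one only gets that each $R_i$ is module-finite with $R/(R:R_i)$ Artinian, which says nothing about $R$ itself. This matters because everything downstream leans on it: Proposition~\ref{1.6}(2) and (3) require a \emph{local Artinian} base ring with infinite residue field, so you cannot apply them to $B\subseteq T$ (resp.\ to $R\subseteq T$) without knowing $B$ (resp.\ $R$) is Artinian; and your fallback for finite $R/M$ (``$R$ is itself a finite set, whence $T$ is finite'') again uses Artinianness. The paper's proof of (2) sidesteps this entirely: it shows $N:=\prod N_i$ is a common ideal of $B$ and $T$, maximal in $B$, and then passes to $B/N\subseteq T/N\cong k^{\sum n_i}$ with $k=R/M$, which has FIP by Proposition~\ref{2.1} — no Artinian hypothesis on $R$ is needed. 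Your argument can be repaired the same way (reduce modulo $N$, after which the base is a field), but as written the key hypothesis of Proposition~\ref{1.6} is unverified and in general false.

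A secondary weakness is the transitivity step in (3). Gluing FIP of $R\subseteq T$ with FIP of $T\subseteq\mathcal R$ to get FIP of $R\subseteq\mathcal R$ is exactly the content of \cite[Theorem 5.8]{DPP2} (FIP along the canonical decomposition $R\subseteq\substack{+\\ S}R\subseteq\substack{t\\ S}R\subseteq S$), which is what the paper cites; \cite[Theorem 3.13]{DPP2} concerns the integral closure, not the $t$-closure, and ``the argument used in the proof of Proposition~\ref{1.6}(3)'' is itself just an appeal to that same Theorem 5.8. You correctly flag that FIP is not transitive, but you do not actually supply the result that rescues it.
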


\begin{proof} (1) Proposition~\ref{2.6} gives that $\substack{t\\\mathcal R}R \subseteq \mathcal R$ has FCP (resp$.$ FIP). 

(2) Set $T_i:=\substack{t\\ R_i}R,\ S_i:=\substack{+\\ R_i}R,\ T:=\prod_{i=1}^nT_i=\substack{t\\\mathcal R}R$. Now, each $R\subseteq S_i$ is subintegral. It follows from Proposition~\ref{2.8} and \cite[Lemma 5.6]{GP1} that $S:=R+\prod_{i=1}^nN_i=\substack{+\\\mathcal R}R$, where $N_i$ is the maximal ideal of $S _i$ for each $i$. Moreover, $N_i\subseteq(S_i:T_i)$ holds for each $i$ by \cite[Proposition 4.9]{DPP2} and $S_i$ and $T_i$ share the ideal $N_i$, since $S_i\subseteq T_i$ is seminormal and infra-integral. Actually, $N_i=(S_i:T_i)$ when $S_i\neq T_i$ and $(S_i:T_i)=S_i$ when $S_i=T_i$. Therefore we get $ N:=\prod_{i=1}^nN_i\subseteq(S:T)$ and $N$ is a common ideal of $S$ and $T$, maximal in $S$ by Proposition~\ref{2.8}. Set $k:=R/M\cong S/N\cong S_i/N_i\cong T_i/N_{i,j}$, for each maximal ideal $N_ {i,j}$ of $T_i $. For each $i$, we have $N_i=\cap_{j=1}^{n_i}N_{i,j}$, for some $n_i$, \cite[Proposition 4.9]{DPP2}, so that $T_i/N_i\cong\prod_{j=1}^{n_i}T_i/N_{i,j}$. Then the extension $S/N\subseteq(\prod_{i=1}^nT_i)/N\cong\prod_{i=1}^n(T_i/N_i)$ can be identified to $k\subseteq k^{\sum n_i}$, which has FIP (and then FCP) by Proposition~\ref{2.1}. It follows that ${}_{\mathcal R}^+R\subseteq{}_{\mathcal R}^tR$ has FIP (and then FCP) by \cite[Proposition 3.7]{DPP2}. 

(3) By \cite[Theorem 4.6 and Theorem 5.8]{DPP2}, $R\subseteq \mathcal R$ has FCP (resp$.$ FIP) if and only if $R\subseteq \substack{+\\\mathcal R}R ,\ \substack{+\\\mathcal R}R \subseteq \substack{t\\\mathcal R}R $ and $\substack{t\\\mathcal R}R \subseteq \mathcal R$ have FCP (resp$.$ FIP) if and only if $R\subseteq \substack{+\\\mathcal R}R $ has FCP (resp$.$ FIP) by (1) and (2). 
\end{proof} 

The FCP case is now completely solved with the following theorem.

\begin{theorem}\label{2.11} Let $R\subseteq R_i,\ i=1,\ldots,n,\ n\geq 2$ be finitely many  extensions and $\mathcal R:=\prod_{i=1}^nR_i$. Then $R\subseteq\mathcal R$ has FCP if and only if $R$ is an Artinian ring and each extension $R \subseteq R_i$ has FCP. 
 \end{theorem}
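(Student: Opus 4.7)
The forward direction is immediate from Proposition~\ref{2.2}(2). For the converse, assuming $R$ is Artinian and each $R\subseteq R_i$ has FCP, the plan is to chain together several reductions already established in this section, so as to arrive at a situation where module-finiteness over an Artinian ring makes FCP automatic via finite length.

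First I would reduce to the integral case: letting $\overline{R_i}$ denote the integral closure of $R$ in $R_i$, each $R\subseteq\overline{R_i}$ has FCP as a subextension, and by Proposition~\ref{2.4}, $\overline{\mathcal R}=\prod_{i=1}^n\overline{R_i}$ is the integral closure of $R$ in $\mathcal R$ and $\overline{\mathcal R}\subseteq\mathcal R$ has FCP. Since FCP passes through composites (\cite[Theorem 3.13]{DPP2}), it suffices to show $R\subseteq\prod\overline{R_i}$ has FCP, so I may assume each $R\subseteq R_i$ is integral. Next, since $R$ is Artinian, $\mathrm{Supp}(\mathcal R/R)=\mathrm{Spec}(R)$ by Proposition~\ref{2.2}(1) is finite, so Proposition~\ref{2.7}(2) lets me localize and assume $(R,M)$ is local Artinian with $\dim R=0$. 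Then Proposition~\ref{2.10}(3) reduces the goal to showing $R\subseteq\substack{+\\ \mathcal R}R$ has FCP.

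To identify this seminormalization, I would apply Proposition~\ref{2.6}(1) to write $\substack{+\\ \mathcal R}R=\substack{+\\ \mathcal S}R$, where $\mathcal S=\prod_{i=1}^nS_i$ and $S_i:=\substack{+\\ R_i}R$. Each $R\subseteq S_i$ is subintegral, so Proposition~\ref{2.8}(1)(3), with $\dim R=0$, yields $\substack{+\\ \mathcal S}R=R+\prod_{i=1}^nn_i$, where $n_i$ is the maximal ideal of $S_i$. Each $R\subseteq S_i$ is integral and has FCP as a subextension, hence $S_i$ is a f.g. $R$-module, and since $R$ is Artinian (in particular Noetherian), $S_i$ is Noetherian. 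Proposition~\ref{2.8}(4) then gives that $R+\prod n_i$ is a f.g. $R$-module, and over the Artinian ring $R$ every f.g. module has finite length. Hence every chain of $R$-submodules of $R+\prod n_i$, in particular every chain of $R$-subalgebras, is finite, giving FCP; back-tracking through the reductions completes the proof.

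The main obstacle will be assembling the chain of reductions correctly and checking at each step that the hypotheses of the invoked propositions really hold --- in particular that integrality, FCP, seminormalization, and the identification $\mathrm{Supp}(\mathcal R/R)=\mathrm{Spec}(R)$ all survive localization at the finitely many maximal ideals of $R$. The critical technical ingredient is Proposition~\ref{2.8}(4): it is this passage from finite generation of each $S_i$ to finite generation of $R+\prod n_i$ that converts the problem into a length-counting argument and thereby delivers FCP in the reduced case.
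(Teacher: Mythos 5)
Your proof is correct, but it takes a genuinely different route through the middle of the argument. You and the paper share the first reduction: by Proposition~\ref{2.4} and \cite[Theorem 3.13]{DPP2} it suffices to treat $R\subseteq\prod_{i=1}^n\overline{R_i}$. From there the paper stays global and simply interposes $R^n$: it observes that $R\subseteq R^n$ has FCP by Proposition~\ref{1.5}, that $R^n\subseteq\prod\overline{R_i}$ has FCP as a finite product of the FCP extensions $R\subseteq\overline{R_i}$ (\cite[Proposition 3.7]{DPP2}), and then composes via \cite[Corollary 4.3]{DPP2}. You instead localize at the finitely many maximal ideals, invoke Proposition~\ref{2.10}(3) to pass to the seminormalization, identify it as $R+\prod n_i$ via Propositions~\ref{2.6}(1) and \ref{2.8}(3), and finish by a length count using Proposition~\ref{2.8}(4). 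Every step checks out (in particular, integrality, FCP and the zero-dimensionality hypothesis all survive localization, and there is no circularity in citing Proposition~\ref{2.10}), but the detour is heavier than necessary: your own closing observation --- a f.g.\ module over an Artinian ring has finite length, so chains of subalgebras, being chains of submodules, are finite --- applies directly to $\prod\overline{R_i}$, which is module-finite over $R$ because each integral FCP extension $R\subseteq\overline{R_i}$ is module-finite. That one-line application of \cite[Theorem 4.2]{DPP2} is essentially what Proposition~\ref{1.5} packages for the paper, and it would let you delete the localization and seminormalization steps entirely. What your longer route buys is an explicit local description of $\substack{+\\ \mathcal R}R$, which the theorem does not need but which is the engine of the FIP results later in the section.
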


\begin{proof} The``only if"  implication is  Proposition~\ref{2.2}(2).

Conversely, assume that $R$ is an Artinian ring and each $R \subseteq R_i$ has FCP. From Proposition~\ref{2.4}, we infer that $\overline {\mathcal R}\subseteq\mathcal R$ has FCP. Moreover $R^n\subseteq\overline{\mathcal R}=\prod_{i= 1}^n\overline{R}_i$ has FCP by \cite[Proposition 3.7]{DPP2} and $R\subseteq R^n$ has FCP by Proposition~\ref{1.5}, giving that $R\subseteq\overline{\mathcal R}$ has FCP by \cite[Corollary 4.3]{DPP2}. To end, use \cite[Theorem 3.13]{DPP2} to get that $R\subseteq \prod_{i=1}^nR_i$ has FCP.
\end{proof} 

We  now consider the FIP property for the product    of two FIP extensions. The case of $n>2$ FIP extensions is studied in Section 4.

\begin{proposition}\label{2.12} Let $R\subset R_1,R_2$ be two subintegral FIP extensions and set $ \mathcal R:=R_1\times R_2$. Assume that $(R,M)$ is quasi-local such that $|R/M|=\infty$. Then $R\subseteq \mathcal R$ has not FIP. 
 \end{proposition}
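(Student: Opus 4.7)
My plan is to argue by contradiction. Assume $R\subseteq\mathcal{R}:=R_1\times R_2$ has FIP. Proposition~\ref{2.2}(2) then forces $R$ to be Artinian, so in particular $\dim(R)=0$. Since each $R\subset R_i$ is subintegral with $R\subsetneq R_i$, each $R_i$ is quasi-local with maximal ideal $N_i$ satisfying $N_i\cap R=M$ and $R_i/N_i\cong R/M$; moreover $N_i\not\subseteq R$, for otherwise $R_i=R+N_i\subseteq R$ would force $R_i=R$. By Proposition~\ref{2.8}(3), the seminormalization is $S:=\substack{+\\ \mathcal{R}}R=R+(N_1\times N_2)$, and $R\subseteq S$ is subintegral. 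Because FIP passes to subextensions, $R\subseteq S$ is FIP, and Proposition~\ref{1.6}(1) now applies---$(R,M)$ being local Artinian with $R/M$ infinite and $R\subset S$ being subintegral FIP---yielding that $[R,S]$ is linearly ordered by inclusion.

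To reach a contradiction, I would exhibit two incomparable elements of $[R,S]$. Consider
\[
T_1:=R+(N_1\times\{0\})\qquad\text{and}\qquad T_2:=R+(\{0\}\times N_2).
\]
Each $T_j$ is visibly an $R$-subalgebra of $\mathcal{R}$ (closure under multiplication is routine from $N_j\cdot N_j\subseteq N_j$ and the diagonal action of $R$), and $T_j\subseteq R+(N_1\times N_2)=S$. Pick $n_1\in N_1\setminus M$, which is nonempty because $R\cap N_1=M$ while $N_1\not\subseteq R$. If $(n_1,0)\in T_2$, writing $(n_1,0)=(r,r)+(0,n_2)$ with $r\in R$ and $n_2\in N_2$ forces $n_1=r\in R$, whence $n_1\in N_1\cap R=M$, a contradiction. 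Symmetrically, choosing $n_2\in N_2\setminus M$ exhibits an element of $T_2$ not in $T_1$; hence $T_1$ and $T_2$ are incomparable, contradicting the linear order on $[R,S]$.

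The pivotal step is locating a subintegral subextension to which Proposition~\ref{1.6}(1) applies: the diagonal extension $R\subseteq\mathcal{R}$ is only infra-integral (not subintegral, since $\mathcal{R}$ has two maximal ideals lying over $M$), so one cannot conclude linear order directly. The workaround is to pass to the seminormalization $S$, whose explicit description $R+(N_1\times N_2)$ is furnished by Proposition~\ref{2.8}(3) and is large enough to contain two obviously incomparable witnesses. Notably, the hypothesis $|R/M|=\infty$ is not used to construct an infinite family of subalgebras directly; it only enters through the linear-order conclusion of Proposition~\ref{1.6}(1).
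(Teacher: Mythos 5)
Your proposal is correct and follows essentially the same route as the paper: identify the seminormalization $S=R+(N_1\times N_2)$ via Proposition~\ref{2.8}, invoke Proposition~\ref{1.6}(1) to get that $[R,S]$ is linearly ordered, and contradict this with two incomparable subalgebras. The only (inessential) difference is your choice of witnesses $R+(N_1\times\{0\})$ and $R+(\{0\}\times N_2)$ where the paper uses $R+(N_1\times M)$ and $R+(M\times N_2)$; both verifications are equally routine.
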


\begin{proof} Let $N_i$ be the maximal ideal of $R_i$. The infra-integrality of $R\subset R_i$ implies that $M\neq N _i$. It follows that $S_1:=R+(N_1\times M)$ and $S_2:=R+(M\times N_2)$ are incomparable $R$-subalgebras of $S:=R+(N_1\times N_2)$, because $(x, 0)\in S_1\setminus S_2$ for $x \in N_1\setminus M$ and $(0,y) \in S_2\setminus S_1$ for $y\in N_2\setminus M$.

Assume now that $R\subset\mathcal R$ has FIP. In this case, $R\subset S$ has FIP and $R$ is Artinian by Proposition~\ref{2.2}. It follows that $S={}_{\mathcal R}^+R$ by Proposition~\ref{2.8}, so that $R\subset S$ is a subintegral extension. From Proposition~\ref{1.6}, we deduce that $S_1$ and $S_2$ are comparable, a contradiction and $R\subset \mathcal R$ has not FIP. 
\end{proof} 

In order to settle the main Theorem~\ref{2.17} of the section, we begin to clear the way by studying when $R\subseteq \mathcal R$ has not FIP. We can suppose that $R_1=R$, because $R\times R_2 \subseteq R_1\times R_2$. By Proposition~\ref{2.2} and Proposition~\ref{2.3}, we need only to consider a $\Sigma$PIR $(R,M)$ in view of \cite[Proposition 3.7]{DPP2}. Indeed, the case of a field $R$ has already been studied in \cite{ADM}. Note that if $(R,M)$ is a local Artinian ring, then $R$ is finite if and only if $R/M$ is finite, since $M^n=0$ for some integer $n$. In such a case, any finite extension of $R$ has FIP. We first look at  minimal ramified extensions.  Before, we give a useful lemma.     

\begin{lemma}\label{2.13} Let $R\subset S$ be a ring extension, where $(R,M)$ is a  quasi-local ring with $|R/M|=\infty$. Let $\mathcal{F}$ be a set of representative elements of $R/M$. If there exists a family $\{R_{\alpha}\}$ of elements of $[R,S]$ such that $R_{\alpha}\neq R_{\beta}$ for each $\alpha\neq \beta\in\mathcal{F}$, then $R\subset S$ has not FIP.
 \end{lemma}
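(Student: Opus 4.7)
The proof is essentially a one-line observation once the cardinalities are unpacked, so my plan is mostly just to spell out what ``representative elements of $R/M$'' buys us. First I would note that by definition $\mathcal{F}$ is in bijection with $R/M$ via the canonical surjection $R \twoheadrightarrow R/M$, so the hypothesis $|R/M|=\infty$ gives $|\mathcal{F}|=\infty$.

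Next I would interpret the second hypothesis: the family $\{R_\alpha\}_{\alpha\in\mathcal{F}}$ is indexed by $\mathcal{F}$, and the condition $R_\alpha\neq R_\beta$ whenever $\alpha\neq\beta$ in $\mathcal{F}$ says precisely that the assignment $\alpha\mapsto R_\alpha$ is injective. Hence the image of this map is a subset of $[R,S]$ of cardinality $|\mathcal{F}|=\infty$.

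Finally I would conclude: since $[R,S]$ contains an infinite subset, it is itself infinite, and so by definition $R\subset S$ does not satisfy FIP. There is no genuine obstacle here; the content of the lemma is purely a packaging device, isolating the pattern (``produce an $\mathcal{F}$-indexed injective family of $R$-subalgebras'') that will be invoked repeatedly in subsequent non-FIP arguments, so that later proofs need only exhibit the family $\{R_\alpha\}$ without reiterating this counting step.
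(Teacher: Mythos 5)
Your argument is correct and is exactly the counting observation the paper has in mind — indeed, the paper's own proof of this lemma is the single word ``Obvious.'' Spelling out that $\alpha\mapsto R_\alpha$ is an injection from the infinite set $\mathcal{F}$ into $[R,S]$, so that $[R,S]$ is infinite and FIP fails, is all there is to it.
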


\begin{proof} Obvious.
\end{proof} 

\begin{lemma}\label{2.14} Let $R\subset S$ be a minimal ramified extension, where $(R,M)$ is a SPIR. 

\begin{enumerate}
\item There exists $t\in M$ such that $M=Rt$ and $t^p=0$, with $t^{p-1}\neq 0$, for some integer $p> 1$. 
\item Let $N$ be the maximal ideal of $S$. There exists $x\in S\setminus R$ such that $S=R+Rx,\ N= Rt+Rx$. Moreover, there are  some unique positive integers $p \geq k,q\geq 1$ and some $a,b\in R\setminus M$ such that  $x^2=at^k,\ tx=bt^q$. Then, $(R:_Rx)=Rt= M = (R:S)$.

\item  $q\geq 2$ holds. 
\end{enumerate}
\end{lemma}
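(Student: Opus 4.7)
I would organize the proof along the three parts of the statement.

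Part (1) is just the unpacking of the definition of a SPIR: $M=Rt$ is nilpotent of index $p$; since a SPIR is not a field, $M\neq 0$ rules out $p=1$, so $p>1$ and $t^{p-1}\neq 0$.

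For Part (2), my first task is to show $S$ is quasi-local with maximal ideal $N=M'$, the distinguished ideal from Theorem~\ref{1.2}(c). Because $R\subset S$ is minimal ramified, $M=(R:S)$ and, by Theorem~\ref{1.1}(b), $MS=M$ (the extension is module-finite). By Theorem~\ref{1.1}(d), every maximal ideal of $S$ either equals $(R:S)=M$ or lies over $M$; the former is excluded since $[S/M:R/M]=2$ and $M\subsetneq M'$ imply $M$ is not maximal in $S$. Thus every maximal ideal lies over $M$, and $(M'/M)^2=0$ together with the $2$-dimensional structure of $S/M$ makes $S/M$ local, hence $S$ is local with $N=M'$. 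I then pick $x\in N\setminus M$: since $\mathrm{L}_R(N/M)=1$, $\bar x$ generates $N/M$, giving $N=Rt+Rx$; since $x\notin R$ (else $x\in R\cap N=M$), $\{1,\bar x\}$ is an $R/M$-basis of $S/M$, so $S=R+Rx$. For the conductors, $Mx\subseteq MS=M\subseteq R$ gives $M\subseteq(R:_Rx)$, while $x\notin R$ forces $(R:_Rx)\neq R$; locality yields $(R:_Rx)=M$, and $M=(R:S)$ by Theorem~\ref{1.2}. For the equations, $x^2\in N^2\subseteq M$ and $tx\in MS=M$ both lie in $Rt$; the SPIR unique-decomposition property (nonzero elements are $ut^k$ with $u$ a unit and $0\leq k<p$, with convention $k=p$ for zero) then gives $x^2=at^k$, $tx=bt^q$ with $a,b$ units and $k,q$ uniquely determined, while $k,q\geq 1$ because $x^2,tx\in N$ are not units.

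For Part (3), I argue by contradiction. Suppose $q=1$, so $tx=bt$ for some unit $b\in R$. Then $t(x-b)=0$. Setting $y:=x-b$, I observe that $y$ is a unit of the local ring $(S,N)$: since $b$ is a unit of $R$, $b\notin M=N\cap R$, hence $b\notin N$; combined with $x\in N$ this gives $y\notin N$. But then multiplying $ty=0$ by $y^{-1}$ yields $t=0$, contradicting $t^{p-1}\neq 0$ from Part (1). Hence $q\geq 2$.

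The main obstacle is the bookkeeping in Part~(2): establishing the quasi-locality of $S$ and the $2$-dimensional local structure of $S/M$ via Theorems~\ref{1.1} and~\ref{1.2}, and then carefully handling the SPIR normal forms for $x^2$ and $tx$ (including the degenerate case where one of these vanishes, which forces $k=p$ or $q=p$). Once this framework is in place, Part~(3) is a short contradiction exploiting the fact that subtracting the unit $b$ from $x$ takes us outside the unique maximal ideal $N$.
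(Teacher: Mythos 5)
Your proof is correct and follows essentially the same route as the paper: part (1) is the SPIR definition, part (2) rests on the structure of a minimal ramified extension (which the paper simply cites from Theorem~\ref{1.2}(c) and \cite[Theorem 2.3(c)]{DPP2}, while you rederive it from Theorems~\ref{1.1} and~\ref{1.2}) together with the uniqueness of SPIR normal forms, and part (3) is word-for-word the paper's contradiction: $q=1$ gives $t(x-b)=0$ with $x-b$ a unit of the local ring $S$, forcing $t=0$.
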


\begin{proof} (1) is the definition of a SPIR (see Section 1). Each element of $R$ is of the form $ut^h$ for some unique integer $h \leq p$ and some unit $u$.

(2) The integers $k$ and $q$ exist by Theorem~\ref{1.2} (c) or \cite[Theorem 2.3 (c)]{DPP2} because $x^2, tx \in M$ and are  unique by (1) since the ideals of $R$ are linearly ordered.

(3) Assume $q=1$. Then, $tx=bt$ implies $t(x-b)=0$. But $x-b\not\in N$ since $b\in R\setminus M$, so that $x-b$ is a unit in $S$, and then $t=0$, a contradiction, which yields $q\geq 2$. In particular, $tx\in Rt^2$. 
\end{proof} 

\begin{proposition}\label{2.15} Let $R\subset S$ be a minimal ramified extension, where $(R,M)$ is a $\Sigma$PIR. We set $\mathcal R := R \times S$ and  $\{N\}:= \mathrm{Max}(S)$.

\begin{enumerate}
\item $T:=\substack{+\\\mathcal R}R =R+(M\times N)$.

\item $R\subset \mathcal R$ has FIP if and only if $N^2=M$ and $MN=M^2=0$.
\end{enumerate}
\end{proposition}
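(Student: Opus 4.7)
For (1), apply Proposition~\ref{2.8}(3) with the two extensions $R\subseteq R$ (trivial) and $R\subseteq S$ (minimal ramified, hence subintegral by the remark following Definition~\ref{1.3}). Both are subintegral; since $R$ is a $\Sigma$PIR, hence zero-dimensional local Artinian, the hypotheses are met and the proposition directly yields $\substack{+\\\mathcal R}R=R+(M\times N)=T$.

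For (2), reduce to $R\subseteq T$ via Proposition~\ref{2.10}(3), applicable since $\dim R=0$ and each of $R\subseteq R$, $R\subseteq S$ has FIP (the latter being minimal). The lattice $[R,T]$ is parametrized as follows: noting $T=\{(r,s)\in R\times S\mid s-r\in N\}$, the map $B\mapsto J(B):=\{n\in N\mid (0,n)\in B\}$ is a bijection from $[R,T]$ onto the set of $R$-submodules $J\subseteq N$ satisfying $J^2\subseteq J$ (since $B=\{(r,r+j)\mid r\in R,\ j\in J(B)\}$ and closure under multiplication of $B$ translates exactly to $J\cdot J\subseteq J$). As $R\subseteq T$ is subintegral and $(R,M)$ is local Artinian with infinite residue field $k:=R/M$, Proposition~\ref{1.6}(1) tells us FIP of $R\subseteq T$ would force $[R,T]$ to be linearly ordered.

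For $(\Leftarrow)$, assume $N^2=M$ and $MN=M^2=0$. Then $MN=0$ makes $N$ a $k$-vector space, and $\dim_k M=1$ together with $\dim_k(N/M)=1$ (the latter from $S/R\cong R/M$ by minimality of $R\subset S$, using $(R:S)=M$, so $N/M\cong(R+N)/R=S/R$) gives $\dim_k N=2$. By Lemma~\ref{2.14}, $N=kt\oplus kx$ with $x^2=at$ for some unit $a$. Enumerating $k$-subspaces: $\{0,\ M=kt,\ N\}$ are all valid, while any other $1$-dimensional subspace has the form $k(m+x)$ with $m\in M$ and yields $(m+x)^2=at$, so $J^2=M$ which fails $M\subseteq k(m+x)$ since $t\notin k(m+x)$. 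Hence $|[R,T]|=3$ and FIP holds.

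For $(\Rightarrow)$, argue contrapositively by exhibiting two incomparable valid $J$'s. From associativity $t\cdot x^2=(tx)\cdot x$ in $S$ one derives $at^{k+1}=b^2t^{2q-1}$; inspecting the cases shows that the conditions fail precisely when $p\geq 3$ or $p=2$ with $k=2$. In both situations take $J_1:=M$ and $J_2:=Rx$: $J_1^2=M^2\subseteq J_1$ is clear, while $J_2^2=Rat^k\subseteq Rx$ because either $t^k=0$ (when $k\geq p$) or $t^k=b^{-1}t^{q-1}\cdot x\in Rx$ (the relation forces $k=2q-2\geq q$ when $t^k\neq 0$). Incomparability is immediate: $x\notin M\subseteq R$, and $t\notin Rx$ since $R\cap Rx=Mx\subseteq M^q$ (or $0$) misses $t$. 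Thus $[R,T]$ is not linearly ordered, contradicting FIP. The principal technical difficulty is verifying closure of $J_2=Rx$ under multiplication in the $p\geq 3$ cases, for which the normal-form identities of Lemma~\ref{2.14} are essential.
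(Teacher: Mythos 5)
Your part (1), the reduction of (2) to $R\subseteq T$, and the translation of $[R,T]$ into the set of $R$-submodules $J\subseteq N$ with $J^2\subseteq J$ are all correct, and your $(\Leftarrow)$ direction --- enumerating the three admissible subspaces of the two-dimensional $R/M$-vector space $N$ --- is a valid and more explicit alternative to the paper's appeal to \cite[Theorem 5.18]{DPP2}.

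The $(\Rightarrow)$ direction, however, has a genuine gap: $J_2:=Rx$ need not satisfy $J_2^2\subseteq J_2$. Your justification rests on the claim that $at^{k+1}=b^2t^{2q-1}$ forces $k=2q-2$ whenever $t^k\neq 0$; but that identity only pins down $k+1=2q-1$ when $t^{k+1}\neq 0$, and it gives no information in the case $k=p-1$ (where $t^k\neq 0$ but $t^{k+1}=0$). Concretely, let $K$ be an infinite field, $R:=K[T]/(T^3)$ with $t$ the class of $T$, and $S:=R[X]/(X^2-t^2,Xt)$; arguing as in the paper's example following Corollary~\ref{2.16}, one checks that $R\to S$ is injective and $R\subset S$ is minimal ramified with $p=3$, $k=2$ and $q=3$ (since $tx=0$). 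Here $Mx=Rtx=0$, so $Rx=Kx$, while $x^2=t^2\neq 0$ and $t^2\notin Kx$; thus $R+(0\times Rx)$ is not a ring and your incomparable pair does not exist in $[R,T]$. Since $M^2=Rt^2\neq 0$, the conditions of the proposition do fail for this extension, so it is precisely a case your argument must handle. The defect is repairable inside your framework: $J_2:=Rx+Rt^2$ satisfies $J_2^2=Rt^k+Rt^{q+1}+Rt^4\subseteq Rt^2\subseteq J_2$ whenever $k\geq 2$ and is incomparable with $J_1=Rt$ (note that the case $k=1$, $p\geq 3$ is vacuous, since your associativity relation then gives $t^2(a-b^2t^{2q-3})=0$ with unit second factor, forcing $p=2$). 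This is in effect what the paper does: for $k>1$ it works with $R_{\alpha}=R+R(0,t+\alpha x)+R(0,t^2)$, whose extra generator $(0,t^2)$ is exactly what guarantees multiplicative closure, and it gets the stronger conclusion of an infinite family rather than a single incomparable pair. As written, though, your proof of necessity is incomplete.
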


\begin{proof} (1) The value of $T$ is given in Proposition~\ref{2.8}.
 
(2) We keep the notation of Lemma~\ref{2.14}. There exists $t\in M$ such that $M=Rt$ and $t^p=0$, with $t^{p-1}\neq 0$, for some integer $p> 1$. There exists $x\in S\setminus R$ such that $S=R+Rx,\ N=R t+Rx$. Moreover, there are some positive integers $p\geq k,q\geq 1$ and some $a,b\in R\setminus M$ such that $x^2=at^k,\ tx=bt^q$, with $q\geq 2$. Then, $M^2=Rt^2,\ MN=Rt^2+Rtx=Rt^2$ since $tx\in Rt^2$, so that $M^2=MN$, and $N^2=Rt^2+Rtx+Rx^2=Rt^2+Rt^k$. 

Let $\mathcal{F}$ be a set of representative elements of $R/M$. Then $\mathcal{F}$ is infinite.

Assume first that $k>1$, so that $x^2\in Rt^2$. For $\alpha\in\mathcal{F}$, set $R_{\alpha}:=R+R(0,t+\alpha x)+R(0,t^2)$. Then, $R_{\alpha}\in[R,T]$. Let $\beta\in\mathcal{F}$ be such that $\alpha\neq\beta$, so that $\alpha-\beta\not\in M$. Assume that $R_{\alpha}=R_{\beta}$. We get that $(0,t+\alpha x)=(c,c)+(0,dt+d\beta x)+(0,et^2)$, for some $c,d,e\in R$, giving $0=c$ and $t+\alpha x=c+dt+d\beta x+et^2=dt+d\beta x+et^2$. Since $(\alpha-d\beta)x=(d-1)t+et^2\in M$, we get $\alpha-d\beta\in M\ (*)$ in view of Lemma~\ref{2.14}(2). It follows that there exists $d'\in R$ such that $\alpha-d\beta=d't$, yielding $d'tx=d'b t^q=(d-1)t+et^2$, so that $(d-1)t=d'bt^q-et^2\in Rt^2$, leading to $d-1\in M\ (**)$. But $(*)$ and $(**)$ give $\alpha-\beta\in M$, a contradiction. Then, $R_{\alpha}\neq R_{\beta}$, and $R\subset\mathcal R$ has not FIP in view of Lemma~\ref{2.13}.

It follows that when $R\subset \mathcal R$ has FIP, we must have $k=1$. 

Now, assume that $k=1$. Then, $x^2t=at^2=(tx)x=xbt^q=(xt)bt^{q-1}=b^2t^{2q-1}$, so that $at^2-b^2t ^{2q-1}=t^2(a-b^2t^{2q-3})=0$. But $q\geq 2$ implies $2q-3\geq 1$, giving $a-b^2t^{2q-3}$ is a unit in $ R$. Then, $t^2=0$ and $p=q=2$, with $tx=0$.  

So, when $R\subset\mathcal R$ has FIP, then $k=1$ and $p=q=2$, which give $M^2=MN=0$ and $N^2=Rt=M$. 

Assume now that $N^2=M$ and $MN=M^2=0$. Then, $Rt=Rt^2+Rt^k$, giving $k=1$, and $Rt^2=0$, giving $p=q=2$. Observe that $R\subset\mathcal R$ is an integral FCP extension by Proposition~\ref{2.11}. Using notation and statement of \cite[Theorem 5.18]{DPP2}, set $R_1:=R+TM=R$. Then, $ T=R[(0,x)],\ (0,x)^3=0\in M$, and, with $T':=R[(0,x)^2]=R[(0,t)]$ and $T'':=R+T'M=R$, we have $T'= T''[(0,t)]$, with $(0,t)\in T$, and $(0,t)^3=0\in T'M$. We can conclude that $R\subset\mathcal R$ has FIP.  
\end{proof} 

\begin{corollary}\label{2.16} Let $R\subset S$ be a non minimal subintegral FIP extension, where $(R,M)$ is a $\Sigma$PIR. Then, $R\subset R\times S$ has not FIP.
 \end{corollary}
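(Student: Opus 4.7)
The plan is to chain Proposition~\ref{2.15}(2) with itself: first apply it to a minimal ramified sub-step $R \subset B_1$ inside $S$ and either conclude directly, or deduce very rigid numerical data that let us apply the same proposition to the next minimal ramified step $B_1 \subset B_2$, which then provably fails FIP. A transfer argument finally brings this failure down to $R \subset R \times S$.

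Since $R$ is a $\Sigma$PIR (local Artinian with infinite residue field) and $R \subset S$ is subintegral FIP, Proposition~\ref{1.6}(1) makes $[R,S]$ linearly ordered. Non-minimality lets us pick a chain $R \subsetneq B_1 \subsetneq B_2 \subseteq S$ where both $R \subset B_1$ and $B_1 \subset B_2$ are minimal ramified (subintegrality). Apply Proposition~\ref{2.15}(2) to $R \subset B_1$: if $R \subset R\times B_1$ does not have FIP we are done, since $R \times B_1 \subseteq R \times S$. Otherwise $N_1^2 = M$ and $MN_1 = M^2 = 0$. Combined with Lemma~\ref{2.14}, this forces $B_1 = R + Rx$ with $x^2 = at$ (for $M=Rt$ and $a\in R$ a unit) and $tx = 0$, whence $N_1 = B_1 x$ is principal, $x^3 = a(tx) = 0$, and $x^2 = at \ne 0$. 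So $B_1$ is itself a SPIR of nilpotency index~$3$, with $N_1^2 = M$; and since $B_1/N_1 \cong R/M$ is infinite, $B_1$ is a $\Sigma$PIR. Applying Proposition~\ref{2.15}(2) now to the minimal ramified extension $B_1 \subset B_2$, FIP of $B_1 \subset B_1 \times B_2$ would demand $N_1^2 = 0$, but $N_1^2 = M \ne 0$. Hence $|[B_1, B_1\times B_2]| = \infty$.

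It remains to transfer this to $R \subset R \times B_2$ (then to $R \subset R \times S$ via $R \times B_2 \subseteq R \times S$). The key observation is that $T \mapsto T':=\{b \in B_2 : (0,b) \in T\}$ gives a bijection between $[R, R \times B_2]$ and the set of $R$-submodules of $B_2$ closed under multiplication, with inverse $T' \mapsto R + (0 \times T')$; the analogous bijection holds for $[B_1, B_1 \times B_2]$ using multiplicatively closed $B_1$-submodules. Because every $B_1$-submodule of $B_2$ is a fortiori an $R$-submodule, we obtain an injection $[B_1, B_1 \times B_2] \hookrightarrow [R, R\times B_2]$, so the right-hand side is infinite, and $R \subset R \times S$ fails FIP. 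The delicate point is this last step: one must recognise that passing from the ambient ring $B_1$ to the smaller $R$ only enlarges the class of multiplicatively closed submodules, so restriction of scalars provides the injection without collision.
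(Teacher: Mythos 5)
Your proof is correct, and its second half takes a genuinely different route from the paper's. Both arguments open identically: choose a chain $R\subset B_1\subset B_2\subseteq S$ of two minimal ramified steps and apply Proposition~\ref{2.15}(2) to $R\subset B_1$ to force the special relations $x^2=at$, $tx=0$, $x^3=0$. From there the paper computes by hand: it writes $B_2=B_1+B_1y$, derives $y^2=bx^2$ and $yx\in Rx^2$, and exhibits the explicit infinite family $R_\alpha=R+R(0,x+\alpha y)+R(0,x^2)$ of pairwise distinct subalgebras, concluding by Lemma~\ref{2.13}. You instead bootstrap Proposition~\ref{2.15} itself: the special relations make $(B_1,N_1)$ a $\Sigma$PIR, with $N_1=B_1x$ of nilpotency index $3$, infinite residue field $B_1/N_1\cong R/M$, and $N_1^2=B_1t=M\neq 0$, so the same proposition applied to the minimal ramified step $B_1\subset B_2$ shows at once that $[B_1,B_1\times B_2]$ is infinite; you then push this down to $[R,R\times B_2]\subseteq[R,R\times S]$ via the correspondence $T\mapsto\{b:(0,b)\in T\}$ between $[A,A\times B]$ and multiplicatively closed $A$-submodules of $B$ (the product analogue of Proposition~\ref{5.3}), restriction of scalars from $B_1$ to $R$ being injective on such submodules. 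Both the bootstrap and the transfer check out: $T=A+(0\times T')$ recovers $T$ from $T'$, and distinct $B_1$-submodules remain distinct $R$-submodules. What your route buys is the elimination of the second round of element manipulations and a reusable lattice-comparison lemma; what the paper's buys is an explicit infinite antichain in the spirit of Lemma~\ref{2.13}, without needing the one fact you leave implicit (true, but worth a line): a local Artinian ring whose maximal ideal is principal is a principal ideal ring, which is what makes $B_1$ literally a SPIR in the paper's sense.
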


\begin{proof} Since $R\subset S$ has FIP, there is $S_1\in[R,S]$, such that $R\subset S_1$ is a minimal extension, necessarily ramified. Assume that $R\subseteq R\times S$ has FIP, then so has $R\subset R\times S_1$. Using the notation of Lemma~\ref{2.14} and Proposition~\ref{2.15} for $R\subseteq S_1$, we have $M=Rx^2$, $S_1=R+Rx$, $N=Rx^2+Rx$, where $N$ is the maximal ideal of $S_1$ and $x^3=0 $, $x^2\neq 0$. There exists $S_2\in[S_1,S]$ such that $S_1\subset S_2$ is a minimal extension, necessarily ramified. Let $P$ be the maximal ideal of $S_2$. In view of \cite[Theorem 2.3(c)]{DPP2}, there is $y\in S_2$ such that $S_2=S_1+S_1y=R+Rx+Ry+Rxy$ and $P=N+S_1y=Rx^2+Rx+Ry+Rxy$. Moreover, $(S_1:y)=N$. But, $NP\subseteq N$ gives $xy\in N$ and $P^2\subseteq N$ gives $y^2\in N$, so that $P=Rx^2+Rx+Ry$ and there exist $b,c,d,e\in R$ such that $y^2=bx^2+cx\ (*)$ and $yx=dx^2+ex\ (**)$. It follows that $yx^2=x(dx^2+ex)=ex^2$, so that $(y-e)x^2=0$. If $e\not\in M$, then $e\not\in P$ and $e-y$ is a unit in $S_2$, giving $x^2=0$, a contradiction. But $e\in M$ implies that $e x^2\in Rx^4=0$, so that $yx^2=0$. Now, $(*)$ gives $xy^2=bx^2x+cx^2= dx^2y+exy=cx^2$. But $e\in M=Rx^2$ entails $ex\in Rx^3=0$, so that $xy^2=dx^2y=0$, whence $cx^2=0$, from which we infer that $c\in M=Rx^2$. Therefore, we get $y^2=bx^2$ since $x^3=0$. Let $\mathcal{F}$ be a set of representative elements of $ R/M$. For $\alpha\in\mathcal{F}$, set $R_{\alpha}:=R+R(0,x+\alpha y)+R(0,x^2)$. Then, $R_{\alpha}\in [R,R+(R\times S_2)]$ since $(x+\alpha y)^2=(1+2\alpha d+\alpha^2b)x^2$. Let $\beta\in\mathcal{F}$ be such that $\alpha\neq\beta$, so that $\alpha-\beta\not\in M$. Assume that $R_{\alpha}=R_{\beta}$. We get that $(0,x+\alpha y)=(c,c)+(0,dx+d\beta y)+(0,ex^2)$, for some $c,d,e\in R$, giving $0=c$ and $x+\alpha y=c+dx+d\beta y+ex^2=dx+d\beta y+ex^2$. Since $(\alpha-d\beta)y=(d-1)x+ex^2\in N$, we get $\alpha-d\beta\in N\cap R=M\ (\dag)$. It follows that there exists $d'\in R$ such that $\alpha-d\beta=d'x^2$, yielding $0=d'x^2y=(d-1)x+ex^2$, so that $(d-1)x\in M$, leading to $d-1\in M\ (\dag\dag)$. But $(\dag)$ and $(\dag\dag)$ give $\alpha-\beta\in M$, a contradiction. Then, $R_{\alpha}\neq R_{\beta}$, and $R\subset R\times S$ has not FIP in view of Lemma~\ref{2.13}.
\end{proof} 

To shorten, a minimal ramified (subintegral) extension $(R,M)\hookrightarrow(S,N)$ between quasi-local rings is called {\it special} if $M^2=MN=0$ and $N^2=M$, as in Proposition~\ref{2.15}. Such extensions exist. Any minimal ramified extension $R\subset S$ such that $R$ is a field is special. Here is another example. Let $K$ be a field and $R:=K[T]/(T^2)$. If $t$ is the class of $T$ in $R$, let $S:=R[X]/(X^2-t,Xt)$. The natural map $R\to S$ is injective. This follows from the fact that $R[X]$ is a free $K[X]$-module with basis $\{1,t\}$ and some easy calculations. Let $x$ be the class of $X$ in $S$. Then, $M:=Rt$ is the only maximal ideal of $R$, so that $(R,M)$ is a quasi-local ring. Moreover, $S=R[x]$, with $x\in S\setminus R$ satisfying $x^2\in M$ and $Mx\subseteq M$, so that $R\subset S$ is a minimal ramified extension \cite[Theorem 2.3]{DPP2}. It follows that the only maximal ideal of $S$ is $N:=Rx+Rt$, and we have the following relations: $t^2=x t=0$ and $x^2=t$, giving $N^2=Rx^2=Rt=M$ and $MN=Rt^2+Rtx=Rt^2=M^2=0$. Then, $R\subset S$ is a special minimal ramified extension. 

\begin{theorem}\label{2.17} Let $R\subseteq S_1,S_2$ be FIP extensions, $\Sigma_i:= \substack{+\\ S_i}R$ for $i=1,2$ and $\mathcal R:=S_1\times S_2$. Then $R\subseteq\mathcal{R}$ has FIP if and only if $ R$ is an FMIR such that $\mathrm{Supp}(\Sigma_1/R)\cap\mathrm{Supp}(\Sigma_2/R)\cap\Sigma\mathrm{Max}(R)=\emptyset$, and, for each $M\in\mathrm{Supp}(\Sigma_i/R)\cap\Sigma\mathrm{Max} (R),\ i\in\{1,2\}$, either $R_M\subset(\Sigma_i)_M$ is a special minimal ramified extension or $R_M$ is a field. 
\end{theorem}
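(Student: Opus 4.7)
My plan is to reduce, via Propositions~\ref{2.2}(3),~\ref{2.7},~\ref{2.10}(3) and~\ref{2.6}(1), to a local question about subintegral FIP extensions of a quasi-local FMIR, and then to extract the two conditions using Propositions~\ref{2.12},~\ref{2.15}, Corollary~\ref{2.16} and an elementary analysis in the residue-field case. Concretely, Proposition~\ref{2.2}(3) forces $R$ to be an FMIR whenever $R \subseteq \mathcal{R}$ has FIP, so $R$ has only finitely many maximal ideals and Proposition~\ref{2.7} reduces FIP of $R \subseteq \mathcal{R}$ to FIP of each $R_M \subseteq \mathcal{R}_M$ for $M \in \mathrm{MSupp}(\mathcal{R}/R)$. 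Since $R_M$ is then an Artinian local FMIR with each $R_M \subseteq (S_i)_M$ inheriting FIP, Proposition~\ref{2.10}(3) combined with Proposition~\ref{2.6}(1) further reduces FIP of $R_M \subseteq \mathcal{R}_M$ to FIP of the subintegral extension $R_M \subseteq (\Sigma_1)_M \times (\Sigma_2)_M$; thus it suffices to decide FIP for a pair of subintegral FIP extensions of a quasi-local FMIR.

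For necessity, I localise at any $M \in \Sigma\mathrm{Max}(R)$, so $R_M$ is either an infinite field or a $\Sigma$PIR. If both $(\Sigma_i)_M \neq R_M$, the two strict subintegral FIP extensions $R_M \subset (\Sigma_i)_M$ contradict FIP of $R_M \subseteq (\Sigma_1)_M \times (\Sigma_2)_M$ by Proposition~\ref{2.12}; hence $\mathrm{Supp}(\Sigma_1/R) \cap \mathrm{Supp}(\Sigma_2/R) \cap \Sigma\mathrm{Max}(R) = \emptyset$. If instead $M \in \mathrm{Supp}(\Sigma_i/R) \cap \Sigma\mathrm{Max}(R)$ with $R_M$ not a field, then $R_M$ is a $\Sigma$PIR and $(\Sigma_j)_M = R_M$ for $j \neq i$ by what precedes; FIP of $R_M \subseteq R_M \times (\Sigma_i)_M$ combined with Corollary~\ref{2.16} forces $R_M \subset (\Sigma_i)_M$ to be minimal ramified, and Proposition~\ref{2.15}(2) then forces this minimal ramified extension to be \emph{special}.

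For sufficiency, I verify FIP locally at each $M \in \mathrm{MSupp}(\mathcal{R}/R)$ and reassemble via Proposition~\ref{2.7}. If $M \notin \Sigma\mathrm{Max}(R)$, then $R_M$ is finite and each $(\Sigma_i)_M$, being an integral FIP extension, is a finitely generated $R_M$-module, hence a finite ring; so $R_M \subseteq (\Sigma_1)_M \times (\Sigma_2)_M$ has FIP automatically. If $M \in \Sigma\mathrm{Max}(R)$, the first hypothesis gives $(\Sigma_j)_M = R_M$ for some $j$, and after reordering it remains to show $R_M \subseteq (\Sigma_1)_M \times R_M$ has FIP. If $(\Sigma_1)_M = R_M$ as well, this reduces to $R_M \subseteq R_M^2$, which is FIP by Corollary~\ref{2.5}; if $R_M \subset (\Sigma_1)_M$ is a special minimal ramified extension, this is exactly Proposition~\ref{2.15}(2). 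The remaining subcase, $R_M$ an infinite field with $(\Sigma_1)_M$ an arbitrary subintegral FIP extension, is what I expect to be the main obstacle, since it is the only local configuration not directly handled by the structural propositions; I would treat it by analysing each $B \in [R_M, (\Sigma_1)_M \times R_M]$ via the two projections. Setting $A_1 := \pi_1(B) \in [R_M, (\Sigma_1)_M]$, either $(0,1) \in B$, whence $B = A_1 \times R_M$, or $\pi_1|_B$ is injective and $B$ is the graph of the unique $R_M$-algebra map $A_1 \to A_1 / \mathrm{rad}(A_1) \cong R_M$; since $[R_M, (\Sigma_1)_M]$ is finite, there are at most $2\,|[R_M, (\Sigma_1)_M]|$ such $B$, completing the proof.
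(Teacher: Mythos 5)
Your overall architecture is the paper's: localize via Propositions~\ref{2.2} and~\ref{2.7}, pass to the seminormalizations, and extract the two conditions from Propositions~\ref{2.12}, \ref{2.15} and Corollary~\ref{2.16}; the necessity half and the $\Sigma$PIR and finite cases of sufficiency coincide with the paper's argument step for step. One link in your reduction is stated too quickly, however: Proposition~\ref{2.10}(3) (and Proposition~\ref{2.6}) is only available for \emph{integral} extensions, whereas the $S_i$ are merely assumed to be FIP extensions of $R$. The paper first replaces $\mathcal{R}_M$ by the integral closure $(\overline{S_1}\times\overline{S_2})_M$ using Proposition~\ref{2.4} together with \cite[Theorem 3.13]{DPP2}, and only then invokes Proposition~\ref{2.10}; since the seminormalization of $R$ in $S_i$ equals that in $\overline{S_i}$, your target $R_M\subseteq(\Sigma_1)_M\times(\Sigma_2)_M$ is unaffected, so the gap is easily repaired, but it must be filled. (Two smaller points: that product extension is infra-integral rather than subintegral, and the literal output of Proposition~\ref{2.10}(3) is the seminormalization $R_M+N_1\times N_2$ of Proposition~\ref{2.8}, which is FIP-equivalent to your target by one further application of Proposition~\ref{2.10}(3).)

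Where you genuinely depart from the paper is the case of an infinite residue field with $(\Sigma_1)_M\neq R_M$. The paper quotes \cite[Theorem 3.8]{ADM} to write $(\Sigma_1)_M=R_M[\alpha_1]$ with $\alpha_1^3=0$ and then observes that the seminormalization is $R_M[(\alpha_1,0)]$, which has FIP by the same theorem. Your projection argument is a correct and more elementary substitute: with $k:=R_M$, $A:=(\Sigma_1)_M$ and $B\in[k,A\times k]$, if $(0,x)\in B$ for some $x\neq 0$ then $k+k(0,x)=k\times k$ gives $(0,1)\in B$, hence $B=\pi_1(B)\times k$; otherwise $\pi_1|_B$ is injective and, since every $A_1\in[k,A]$ is local with residue field $k$ (subintegrality passes to subextensions), $B$ must be the graph of the unique $k$-algebra map $A_1\to k$, namely the residue map. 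This yields $|[k,A\times k]|\leq 2\,|[k,A]|$ with no primitive-element input, and it is arguably cleaner and more self-contained than the appeal to \cite{ADM}; the price is that it is special to the field case, whereas the paper's route via $\alpha^3=0$ reuses machinery already needed elsewhere.
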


\begin{proof} For a maximal ideal $M$ of $R$, we denote by $S(M)$ the seminormalization of $R_M$ in $(S_1\times S_2)_M$.

Assume that $R\subseteq S_1\times S_2$ has FIP. In view of Proposition~\ref{2.2},  $R$ is an FMIR, and so is a finite direct product $\prod_{i=1}^nR_i$ of fields, finite local rings and SPIRs  that are localization  of $R$ at some maximal ideal $M$ of $R$ by Proposition~\ref{2.3}. Hence $R_M\subseteq(S_1\times S_2)_M=(S_1)_M\times(S_2)_M$ has FIP by Proposition~\ref{2.7}.   Assume that $R_M$ is not a finite ring. Then, $R_M$ is either an infinite field or a $\Sigma$PIR. 

Let $M\in\Sigma\mathrm{Max}(R)$, so that $|R_M/M'|=\infty$ for $M':=MR_M$ (see the remark before Lemma~\ref{2.13}). For $j\in\{1,2\}$, we have that $R_M\subseteq(\Sigma_j)_M$ is a subintegral FIP extension with $(R_M,M')$ a quasi-local ring. Assume first that $R_M$ is a $\Sigma$PIR. Using Propositions~\ref{2.12}, ~\ref{2.15} and Corollary~\ref{2.16}, we get that $R_M=(\Sigma_j)_M$ for some $j\in\{1,2\}$, so that $M\not\in\mathrm{Supp}(\Sigma_j/R)$ and, for $l\in\{1,2\}\setminus\{j\}$, either $R_M =(\Sigma_l)_M$ or $R_M\subset(\Sigma_l)_M$ is a special minimal ramified extension. Assume now that $R_M$ is an infinite field. Using Proposition~\ref{2.12}, we get that $R_M=(\Sigma_j)_M$ for some $j\in\{1,2\}$ and, for $l\in\{1,2\}\setminus\{j\}$, there exists $\alpha\in(\Sigma_l)_M$ which satisfies $(\Sigma_ l)_M=R_M[\alpha]$ and $\alpha^3=0$ by \cite[Theorem 3.8]{ADM} since $R_M\subseteq(\Sigma_l)_M$ has FIP. Then, $M\not\in\mathrm{Supp}(\Sigma_1/R)\cap\mathrm{Supp}(\Sigma_2/R)$ and $\mathrm{Supp}(\Sigma_1/R)\cap\mathrm{Supp}(\Sigma_2/R)\cap\Sigma\mathrm{Max}(R)=\emptyset$. 

Conversely, assume that $R$ is an FMIR, and so a finite direct product $\prod_{i=1}^nR_i$ of fields, finite local rings and SPIRs such that $\mathrm{Supp}(\Sigma_1/R)\cap\mathrm{Supp}(\Sigma_2/R)\cap\Sigma\mathrm{Max}(R)=\emptyset$, with, for each $M\in\mathrm{Supp}(\Sigma_i/R)\cap\Sigma\mathrm {Max}(R),\ i\in\{1,2\}$, either $R_M\subset(\Sigma_i)_M$ is a special minimal ramified extension or $R_M$ is an infinite field. Observe first that for each $i$, there is $M\in\mathrm{Max}(R)$ such that $R_i=R_M$. 

Since $R$ is a quasi-semilocal ring, $\mathrm{MSupp}((S_1\times S_2)/R)$ is finite. Then, $R\subseteq S_1\times S_2$ has FIP if and only if $R_M\subseteq(S_1\times S_2)_M$ has FIP for each $M\in\mathrm{MSupp}((S_1\times S_2)/R)$ by Proposition~\ref{2.7}. Moreover, $R_M\subseteq(S_j)_M$ is an FIP extension for $j=1,2$. Fix $M\in\mathrm{MSupp}((S_1\times S_2)/R)$. Proposition~\ref{2.4} tells us that $\overline{\mathcal{R}}_M=(\overline{S_1})_M\times(\overline{S_2})_M=(\overline{S_1}\times\overline{S_2})_M\subseteq\mathcal{R}_M$ has FIP, where $\overline{\mathcal{R}}_M$ (resp$.$ $(\overline{S_i}) _M$) is the integral closure of $R_M$ in $({S_1})_M\times({S_2})_M=({S_1}\times{S_2})_ M$ (resp$.$ $({S_ i})_M$). Then, in view of \cite[Theorem 3.13]{DPP2}, $R_M\subseteq({S_1}\times{S_ 2})_M $ has FIP if and only if $R_M\subseteq(\overline{S_1}\times\overline{S_2})_M$ has FIP. From Proposition~\ref{2.10}, we deduce that $R_M\subseteq(S_1\times S_2)_M$ has FIP if and only if $R_M\subseteq S(M)$ has FIP. But, $S(M)=\substack{+\\ (\Sigma_1)_M\times(\Sigma_2)_M}R_M$ by Proposition~\ref{2.6}. Therefore, $S(M)$ is module finite over the Artinian ring $R_M$ by Proposition~\ref{2.8}. 

(1) If $R_M$ is an infinite field, then $M\in\Sigma\mathrm{Max}(R)$. We have $R_M=(\Sigma_l)_M$ for some $l\in\{1,2\}$ since $\mathrm{Supp}(\Sigma_1/R)\cap\mathrm{Supp}(\Sigma_2/R)\cap\Sigma\mathrm{Max}(R)=\emptyset$. Let $j\neq l$. Since $R_M\subseteq(\Sigma_j)_M$ has FIP, there is $\alpha_j\in(\Sigma_j)_M$ such that $(\Sigma_j)_M=R_M[\alpha_j]$, with $\alpha_j^3=0$ by \cite[Theorem 3.8]{ADM}. Moreover, $R_M[\alpha_j]$ is a quasi-local ring with maximal ideal $\alpha R_M[\alpha_j]$. Set $\alpha_l:=0$ and $\alpha:=(\alpha_1,\alpha_2)$. In view of Proposition~\ref{2.8}, we get $S(M)=R_ M[\alpha]$, with $\alpha^3=0$, so that $R_M\subseteq S(M)$ has FIP by \cite[Theorem 3.8]{ADM}. Indeed, $S(M)=R_M+(\alpha_jR_M [\alpha_j]\times 0)=R_M+\alpha R_M$.

(2) If $R_M$ is a $\Sigma$PIR, then $M\in\Sigma\mathrm{Max}(R)$, there is some $j\in\{1,2\}$ such that $(\Sigma_j)_M=R_M$, with, for $l\in\{1,2\}\setminus\{ j\}$, either $R_M=(\Sigma_l)_M$ or $R_M\subset (\Sigma_l)_M$ is a special minimal ramified extension. Then, $R_M\subseteq S(M)$ has FIP by either Proposition~\ref{2.15} or Corollary~\ref{2.5}.

(3) If $R_M$ is a finite ring, then $S(M)$ is a finite ring since a finitely generated $R_M$-module, and $R_M\subseteq S(M)$ has FIP.

In every case, $R_M\subseteq S(M)$ has FIP, and so has $R\subseteq R_1\times R_2$.
\end{proof} 

\begin{corollary}\label{2.18} Let $R\subseteq S_1,S_2$ be seminormal FIP extensions and $\mathcal R:=S_1\times S_2$. Then $R\subseteq\mathcal{R}$ has FIP if and only if $ R$ is an FMIR. 
\end{corollary}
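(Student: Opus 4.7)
The plan is to derive Corollary~\ref{2.18} as a direct specialization of Theorem~\ref{2.17}. The point is that seminormality of each extension $R\subseteq S_i$ collapses all of the subtle side conditions in the theorem.

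First I would observe that if $R\subseteq S_i$ is seminormal, then $\Sigma_i = \substack{+\\ S_i}R = R$. Indeed, $\substack{+\\ S_i}R$ is by definition the smallest $B\in[R,S_i]$ such that $B\subseteq S_i$ is seminormal; since $R\subseteq S_i$ is already seminormal, $B = R$ works and hence is the minimum. (Equivalently, $\substack{+\\ S_i}R$ is the largest element of $[R,S_i]$ subintegral over $R$, and seminormality of $R\subseteq S_i$ forces the only subintegral subextension of $R$ in $S_i$ to be $R$ itself.) Therefore $\Sigma_i/R = 0$ and $\mathrm{Supp}(\Sigma_i/R) = \emptyset$ for $i=1,2$.

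Next I would feed this into Theorem~\ref{2.17}. The intersection condition
\[
\mathrm{Supp}(\Sigma_1/R)\cap\mathrm{Supp}(\Sigma_2/R)\cap\Sigma\mathrm{Max}(R)=\emptyset
\]
holds trivially because the first two sets are empty. Likewise, the local condition imposed on every $M\in\mathrm{Supp}(\Sigma_i/R)\cap\Sigma\mathrm{Max}(R)$ is vacuous. Hence Theorem~\ref{2.17} reduces, in the seminormal case, to the single requirement that $R$ be an FMIR. This gives the ``if'' direction: assuming $R$ is an FMIR and $R\subseteq S_1,S_2$ are (seminormal) FIP extensions, Theorem~\ref{2.17} yields FIP for $R\subseteq S_1\times S_2$.

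For the converse, if $R\subseteq\mathcal{R}$ has FIP, then Proposition~\ref{2.2}(3) immediately gives that $R$ has finitely many ideals, i.e.\ $R$ is an FMIR. There is no real obstacle here; the only thing one must check carefully is the identification $\Sigma_i = R$, which is a direct consequence of the definition of seminormalization, so the corollary follows painlessly from Theorem~\ref{2.17}.
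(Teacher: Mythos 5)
Your proof is correct and follows essentially the same route as the paper: the paper likewise observes that seminormality forces $\substack{+\\ S_i}R=R$, so the support conditions of Theorem~\ref{2.17} become vacuous and the theorem reduces to requiring that $R$ be an FMIR. Your write-up merely spells out the vacuousness of the local conditions and the converse via Proposition~\ref{2.2}(3) more explicitly than the paper does.
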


\begin{proof} Since $R=\substack{+\\ S_i}R$ for $i=1,2$, we get $\mathrm{Supp}(\Sigma_1/R)\cap\mathrm{Supp}(\Sigma_2/R)\cap\Sigma\mathrm{Max}(R)=\emptyset$. Then, use Theorem~\ref{2.17}.\end{proof}

\section{FCP or FIP extensions and the CRT}

The aim of this section is to get an extension of the Chinese Remainder Theorem (CRT) in the following sense. Let $R$ be a ring, $n>1$ an integer and $I_1,\ldots, I_n$ ideals of $R$ distinct from $R$, but not necessarily distinct, such that $\cap_{j=1}^nI_j=0$. Such a family $\{I_1,\ldots,I_n\}$ of ideals of $R$ is called a {\it separating family}, a reference to Algebraic Geometry where a finite family of morphisms $\{f_j:M \to M_j \mid j= 1,\ldots , n\}$ of $R$-modules is called separating if $\cap_{j=1}^n\ker f_j= 0$. We intend to study the ring extension $R\subseteq \prod_{j=1}^n(R/I_j)=: \mathcal R$ associated to a separating family, denoting by $C:=(R: \mathcal R)$ its conductor, also called the {\it conductor of the separating family}. We set $J_j := (\cap _{k=1,k\neq j}^nI_k)$, or more generally $J_E: =\cap_{k=1, k\notin E}^n I_k$ for any subset $E$ of $\{1,\ldots,n\}$. We also denote by $e_i$ the element of $\mathcal{R}$ whose ith coordinate is $1$ and the others are $0$ and call $\{e_1,\ldots,e_n\}$ the ``canonical basis". The above extension is an isomorphism if $C=R$ (Chinese Remainder Theorem). If not, either $|[R,\mathcal R]|$ or $\ell [R,\mathcal R] $ measures in some sense how $R$ is far from to $\mathcal R$.

\begin{proposition}\label{3.1} Let $R$ be a ring and $\{I_1,\ldots,I_n\}$ a separating family of ideals of $R$. Then:

\begin{enumerate}
\item $R\subseteq \mathcal R$ is an infra-integral extension.

\item $C=\cap_{j=1}^n(I_j+J_j)=\sum_{j=1}^n J_j$.

\item $R\subseteq\mathcal R$ has FCP if and only if $R/ C$ is  Artinian.
\end{enumerate}
 \end{proposition}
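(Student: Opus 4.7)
My plan is to exploit the canonical idempotents $e_1,\ldots,e_n\in\mathcal R=\prod_{j=1}^nR/I_j$, the injectivity of the diagonal map $\delta\colon R\to\mathcal R$ (which follows from $\bigcap_jI_j=0$), and the $\delta(R)$-module decomposition $\mathcal R=\sum_{i=1}^n\delta(R)e_i$.

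For (1), every prime ideal of $\mathcal R$ has the form $Q_{j,P}=(P/I_j)\times\prod_{k\neq j}(R/I_k)$ for some $j$ and some $P\in\mathrm{Spec}(R)$ containing $I_j$; it contracts under $\delta$ to $P$, and the residual morphism $R/P\to\mathcal R/Q_{j,P}\cong(R/I_j)/(P/I_j)\cong R/P$ is the identity. Since $\mathcal R$ is module-finite over $R$ (generated by the $e_i$'s over $\delta(R)$), the extension $R\subseteq\mathcal R$ is integral with trivial residual extensions, hence infra-integral.

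For (2), $r\in C=(R:\mathcal R)$ iff $\delta(r)e_i\in\delta(R)$ for each $i$; since $\delta(r)e_i=(0,\ldots,r+I_i,\ldots,0)$, this amounts to the existence of $s_i\in J_i$ with $s_i\equiv r\pmod{I_i}$, i.e., $r\in J_i+I_i$, whence $C=\bigcap_i(J_i+I_i)$. Each $J_j\subseteq C$ (for $r\in J_j$, $\delta(r)=re_j$ and $\delta(r)x=rx_je_j\in\delta(J_j)$ for any $x\in\mathcal R$), giving $\sum_jJ_j\subseteq C$; conversely, for $r\in C$ with $\delta(s_i)=\delta(r)e_i$ and $s_i\in J_i$, one has $\delta(r)=\sum_i\delta(r)e_i=\delta(\sum_is_i)$, and the injectivity of $\delta$ gives $r=\sum_is_i\in\sum_jJ_j$.

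For (3), the bijection $T\mapsto T/C$ between $[R,\mathcal R]$ and $[R/C,\mathcal R/C]$ reduces matters to the case $C=0$. If $R/C$ is Artinian, $\mathcal R$ is module-finite over $R$ with zero conductor, and \cite[Theorem 4.2]{DPP2} (applied exactly as in the proof of Proposition~\ref{1.5}) gives FCP. For the converse, I would associate to each ideal $I$ of $R$ and each index $i$ the $R$-subalgebra $R+Ie_i\in[R,\mathcal R]$ (a subring because $e_i^2=e_i$ and $I$ is an ideal), and verify by a short coordinate calculation that $R+Ie_i=R+I'e_i$ iff $I+(J_i+I_i)=I'+(J_i+I_i)$. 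Hence the ideals of $R/(J_i+I_i)$ embed into $[R,\mathcal R]$, and FCP forces each $R/(J_i+I_i)$ to be Artinian. Because $C=0$, the natural map $R\hookrightarrow\prod_iR/(J_i+I_i)$ is an integral module-finite embedding into an Artinian ring; Eakin-Nagata yields that $R$ is Noetherian, integrality yields $\dim R=0$, and so $R=R/C$ is Artinian. The main obstacle is this converse direction: extracting a chain condition on $R$ from FCP on $R\subseteq\mathcal R$, which the subalgebras $R+Ie_i$ together with the fiber analysis accomplish, Eakin-Nagata then closing the gap.
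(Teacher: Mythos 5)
Your proof is correct; the difference from the paper is that where the paper proves by citation, you prove by hand. For (1) both arguments are the same (read off $\mathrm{Spec}(\mathcal R)$ and the residual maps from the product structure). For (2) the paper simply cites \cite[Lemma 2.25]{V}, whereas your idempotent computation ($r\in C$ iff $\delta(r)e_i\in\delta(R)$ for all $i$, which unwinds to $r\in I_i+J_i$, and the splitting $\delta(r)=\sum_i\delta(r)e_i$ with $\delta(r)e_i=\delta(s_i)$, $s_i\in J_i$) is a correct self-contained proof of the same identity. For (3) the paper invokes \cite[Theorem 4.2]{DPP2} for \emph{both} directions at once: since $\mathcal R$ is automatically module-finite over $R$ (generated by the $e_i$), that theorem says FCP holds iff $R/C$ is Artinian, so your Eakin--Nagata detour for necessity is not needed if you are willing to use the full strength of the citation. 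That said, your necessity argument is sound: the subalgebras $R+Ie_i$ are rings, your criterion $R+Ie_i=R+I'e_i\Leftrightarrow I+(I_i+J_i)=I'+(I_i+J_i)$ checks out coordinatewise, so FCP forces each $R/(I_i+J_i)$ to be Artinian, and then $R/C\hookrightarrow\prod_i R/(I_i+J_i)$ module-finite plus Eakin--Nagata and zero-dimensionality give $R/C$ Artinian. This is exactly the pattern of the paper's own proof of Proposition~\ref{1.5} (the chains $R+(0\times I_j)$ in $R^2$), so what your version buys is independence from the ``only if'' half of the cited theorem, at the cost of length.
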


\begin{proof} (1) Clearly, $R\to\prod_{j=1}^n(R/I_ j)$ is an integral ring extension (actually, module finite), that is infra-integral because of the form of elements  of $\mathrm{Spec}(\mathcal R)$(cf. the remark following Definition~\ref{1.3}).

(2) is \cite[Lemma 2.25]{V}.

(3) In view of \cite[Theorem 4.2]{DPP2}, we have that $R\subseteq \mathcal R$ has FCP if and only if $R/C$ is an Artinian. 
 \end{proof} 
 
An immediate consequence is the following. Let $R$ be a ring, $n>1$ an integer and $I_1,\ldots,I_n$ ideals of $R$ distinct from $R$, but not necessarily distinct. Set $C:=\sum_{j=1}^n J_j$. Then, $R/(\cap_{j=1}^nI_j)\subseteq\prod_{j=1}^n(R/I_j)$ has FCP if and only if $R/C$ is an Artinian ring. 

In the rest of the section, we examine the FIP property. The case of a separating family with two elements is easy to solve.

 \begin{proposition}\label{3.2} Let $R$ be a ring, with two ideals $I$ and $J$ such that $I, J\neq R$ and $I\cap J=0$. Then  $R\subseteq R/I\times R/J$ is a $\Delta_0$-extension, which  has FIP if and only if $R/(I+J)$ is an FMIR. 
 \end{proposition}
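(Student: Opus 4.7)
The plan is to realize the extension $R\subseteq R/I\times R/J$ as a quadratic extension so that Proposition~\ref{1.4} applies directly. Writing $S:=R/I\times R/J$, $R$ embedded diagonally, and $e_1:=(1+I,0+J)\in S$, I first check that $S=R+Re_1$. Indeed, for any $(a+I,b+J)\in S$, setting $r:=b$ and $s:=a-b$ gives $(r+I,r+J)+s\cdot e_1=(a+I,b+J)$, so every element of $S$ lies in $R+Re_1$. Thus the extension is quadratic with generator $t=e_1$.

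Next I identify the conductor. By Proposition~\ref{3.1}(2) applied to the separating family $\{I,J\}$ (with $J_1=J$ and $J_2=I$), the conductor is $C=(R:S)=I+J$. Alternatively one checks directly: $r\in C$ iff $r\cdot e_1=(r+I,0)\in R$ iff there is $r'\in R$ with $r'\equiv r\pmod{I}$ and $r'\equiv 0\pmod{J}$, which holds exactly when $r\in I+J$.

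Now I invoke Proposition~\ref{1.4} with the above $t$ and $C=I+J$: it asserts that every $R$-submodule between $R$ and $S$ is already a ring, which is precisely the $\Delta_0$-property (every $R$-submodule of $S$ containing $R$ lies in $[R,S]$), and it yields a bijection
\[
[R,S]\ \longleftrightarrow\ \{\text{ideals of }R/(I+J)\}.
\]
Consequently $R\subseteq S$ has FIP if and only if $R/(I+J)$ has only finitely many ideals, i.e.\ is an FMIR.

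I don't expect any genuine obstacle here; the whole argument reduces to exhibiting the single element $e_1$ that generates $S$ as an $R$-module together with $R$, and then quoting Gilbert's result (Proposition~\ref{1.4}) to convert the $R$-subalgebra lattice of $S$ into the ideal lattice of the conductor quotient. The mild point worth noting is that Proposition~\ref{1.4} simultaneously yields both conclusions of the statement (the $\Delta_0$ property and the FIP criterion), so no separate argument is needed for the $\Delta_0$ half.
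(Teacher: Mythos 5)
Your proof is correct and follows essentially the same route as the paper: both exhibit $S=R/I\times R/J$ as $R+Re_1$ and then invoke Gilbert's result (Proposition~\ref{1.4}) to identify $[R,S]$ with the ideals of $R/(I+J)$, which gives the $\Delta_0$ property and the FIP criterion simultaneously. Your explicit verification of the conductor $C=I+J$ is a welcome extra detail that the paper leaves implicit.
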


\begin{proof}  For $x\in R$, we denote by $\bar x$ its class in $R/I$ and by $\tilde x$ its class in $R/I$. 
 Set $e_1:=(\bar 1,\tilde 0)$, $e_2:=(\bar 0,\tilde 1)$, so that $\{e_1,e_2\}$ is a generating set of the $R$-module $R/I\times R/J$. From $e_i^2=e_i$ and $e_1e_2=0$ follow that $R/I\times R/J=R+Re_1$. Hence there is a bijection between the set of ideals of $R$ containing $ I+J$ and  $[R, R/I\times R/J]$ by Proposition~\ref{1.4} and  $R\subseteq R/I\times R/J$ has FIP if and only if $R/(I+J)$ is an FMIR. 
 \end{proof}

Next lemma  shows that we can reduce our study to a zero conductor extension. 

\begin{lemma}\label{3.3} Let $R$ be a ring and $\{I_1,\ldots,I_n\}$ a separating family of ideals of $R$. Then $R\subseteq \mathcal R$ has FIP if and only if the zero conductor extension $R/(\sum_{j=1}^nJ_j ) \subseteq\prod_{j=1}^n(R/(I_j+J_j))$ has FIP. 
 \end{lemma}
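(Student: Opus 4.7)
The plan is to quotient by the conductor $C := (R : \mathcal{R}) = \sum_{j=1}^n J_j$ (by Proposition~\ref{3.1}(2)) and invoke the standard bijection between $[R,\mathcal{R}]$ and $[R/C, \mathcal{R}/C]$ coming from the fact that any $T\in[R,\mathcal{R}]$ contains $C$.

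First I would check that $C$ is an ideal of $\mathcal{R}$ (indeed, by the very definition of the conductor, $C$ is the largest ideal of $\mathcal{R}$ contained in $R$), so the quotient ring $\mathcal{R}/C$ makes sense. Next I would identify $\mathcal{R}/C$ with $\prod_{j=1}^n R/(I_j+J_j)$. For this, view $C$ inside $\mathcal{R}=\prod R/I_j$: since $J_k\subseteq I_j$ for every $j\neq k$, the image of $J_k$ in $\mathcal{R}$ is concentrated in the $k$-th coordinate and equals $(J_k+I_k)/I_k$ there; multiplying by the idempotents $e_k$ and summing gives that $C=\sum_k J_k$, as an ideal of $\mathcal{R}$, equals the product ideal $\prod_k (J_k+I_k)/I_k$. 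Taking the quotient,
\[
\mathcal{R}/C \;\cong\; \prod_{k=1}^n \bigl(R/I_k\bigr)\big/\bigl((J_k+I_k)/I_k\bigr) \;\cong\; \prod_{k=1}^n R/(I_k+J_k),
\]
while on the other side $R/C=R/\sum_j J_j$. Composing with the natural inclusion, the extension $R/C\subseteq\mathcal{R}/C$ is exactly the announced one $R/(\sum_j J_j)\subseteq\prod_j R/(I_j+J_j)$.

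I would then verify that this latter extension has conductor zero. Indeed, for $\bar y\in\mathcal{R}/C$, one has $\bar y\cdot(\mathcal{R}/C)\subseteq R/C$ if and only if $y\mathcal{R}\subseteq R+C=R$, i.e.\ if and only if $y\in (R:\mathcal{R})=C$; so $(R/C : \mathcal{R}/C)=0$, which explains the terminology ``zero conductor extension''.

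Finally, I would apply the standard bijection $T\mapsto T/C$ from $[R,\mathcal{R}]$ onto $[R/C,\mathcal{R}/C]$. This map is well defined and bijective because every $T\in[R,\mathcal{R}]$ satisfies $C\subseteq R\subseteq T$ and $C$ is an ideal of $\mathcal{R}$, hence of $T$; inclusion is preserved in both directions. Therefore $|[R,\mathcal{R}]|=|[R/C,\mathcal{R}/C]|$, so one set is finite if and only if the other is, which yields the FIP equivalence. The only real piece of bookkeeping is the identification of $\mathcal{R}/C$ with $\prod_j R/(I_j+J_j)$; everything else is formal.
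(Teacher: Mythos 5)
The proposal is correct and takes essentially the same approach as the paper: both pass to the quotient by the conductor $C=\sum_{j=1}^nJ_j$, identify $C$ inside $\mathcal{R}$ as the product ideal $\prod_{j=1}^n(I_j+J_j)/I_j$ so that $\mathcal{R}/C\cong\prod_{j=1}^nR/(I_j+J_j)$, and conclude via the bijection $T\mapsto T/C$ between $[R,\mathcal{R}]$ and $[R/C,\mathcal{R}/C]$. The only cosmetic difference is that you prove this last bijection (and the zero-conductor claim) by hand, where the paper simply cites \cite[Proposition 3.7]{DPP2}.
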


\begin{proof} By \cite[Proposition 3.7]{DPP2},  $R\subseteq \mathcal R$, with conductor $C$,  has FIP if and only if $R/C\subseteq \mathcal R/C$ has FIP. Since $C$ is an ideal of $\mathcal R $, for each $j\in\{1,\ldots,n\}$, there exists an ideal $C_j$ of $R$ containing $I_j$ such that $C= \prod_{j=1}^n C_j/I_j$. Now, there is a natural isomorphism $\mathcal R/C\cong \prod_{j=1}^n(R/C_j)$. For each  $j$, we get that $C_{j}/I_{j}=(I_{j}+ J_j)/I_j$ because $I_{j}+\sum_{i=1}^n J_i= J_j+(\sum_{i=1,i\neq j}^n J_i)+I_{j}=I_{j}+ J_j$.  Then, $R/C_{j}\cong(R/I_{j})/(C_{j}/I_{j})\cong(R/I_{j})/((I_{j}+J_j)/I_{j})\cong R/(I_{j}+J_j)$ giving the wanted result.
\end{proof} 

\begin{proposition}\label{3.4} Let $R$ be a ring and $\{I_1,\ldots,I_n\}$ a separating family of ideals of $R$ with zero conductor. Then: 

\begin{enumerate}
\item $J_j=0$ for each $j$.

\item If $R\subseteq \mathcal R$ has FIP, then $R/(J_{ \mathcal{P}_1}+ J_{ \mathcal{P}_2})$ is an FMIR for any partition $\{\mathcal{P}_1,\mathcal{P}_2\}$ of $\{1,\ldots, n\}$ as well as $R/I_j$  for each $j$. In that case, $R$ is  an Artinian ring.
\end{enumerate}
 \end{proposition}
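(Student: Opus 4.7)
The plan is to derive (1) directly from Proposition~\ref{3.1}(2) and to obtain (2) by applying the two-factor result Proposition~\ref{3.2} to well-chosen subextensions of $R\subseteq\mathcal R$. For (1), Proposition~\ref{3.1}(2) gives $C=\sum_{j=1}^{n}J_j$, and since $C=0$ and each $J_j$ is an ideal of $R$, this forces $J_j=0$ for every $j$.

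For the first half of (2), fix a partition $\{\mathcal P_1,\mathcal P_2\}$ of $\{1,\ldots,n\}$ with both parts nonempty. The projection $R\to\prod_{k\in\mathcal P_2}R/I_k$ has kernel $\cap_{k\in\mathcal P_2}I_k=J_{\mathcal P_1}$, giving an injection $R/J_{\mathcal P_1}\hookrightarrow\prod_{k\in\mathcal P_2}R/I_k$, and symmetrically for $\mathcal P_2$. Grouping the factors of $\mathcal R$ according to the partition identifies $R/J_{\mathcal P_1}\times R/J_{\mathcal P_2}$ with an $R$-subalgebra of $\mathcal R$ containing the diagonal image of $R$; the diagonal map $R\to R/J_{\mathcal P_1}\times R/J_{\mathcal P_2}$ is injective because $J_{\mathcal P_1}\cap J_{\mathcal P_2}=\cap_{k=1}^{n}I_k=0$. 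Therefore $R\subseteq R/J_{\mathcal P_1}\times R/J_{\mathcal P_2}$ is a subextension of $R\subseteq\mathcal R$ and inherits FIP. Both $J_{\mathcal P_1}$ and $J_{\mathcal P_2}$ are proper (each is contained in some $I_k$, which is proper by the definition of a separating family), so Proposition~\ref{3.2} applies and yields that $R/(J_{\mathcal P_1}+J_{\mathcal P_2})$ is an FMIR.

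The statement for $R/I_j$ is then the special case of the partition $\mathcal P_1=\{j\}$, $\mathcal P_2=\{1,\ldots,n\}\setminus\{j\}$: by (1), $J_{\{j\}}=J_j=0$, while $J_{\{1,\ldots,n\}\setminus\{j\}}=I_j$, so $J_{\mathcal P_1}+J_{\mathcal P_2}=I_j$ and $R/I_j$ is an FMIR. Finally, since FIP implies FCP, Proposition~\ref{3.1}(3) together with $C=0$ gives that $R=R/C$ is Artinian. The only mildly delicate point is the compatible identification of $R/J_{\mathcal P_1}\times R/J_{\mathcal P_2}$ inside $\mathcal R$, which is a straightforward bookkeeping check that composing the diagonal of $R$ with the partition-grouping isomorphism of $\mathcal R$ recovers the canonical embedding $R\hookrightarrow\mathcal R$.
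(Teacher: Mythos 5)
Your proof is correct and follows essentially the same route as the paper's: part (1) from $C=\sum_j J_j=0$, and part (2) by exhibiting $R\subseteq R/J_{\mathcal P_1}\times R/J_{\mathcal P_2}$ as a subextension of $R\subseteq\mathcal R$ (the paper writes $K_i:=J_{\mathcal P_i}$ and uses the composite $R\subseteq R/K_1\times R/K_2\subseteq\mathcal R$), then invoking Proposition~\ref{3.2} and, for the $R/I_j$ claim, the partition $\{\{j\},\{1,\ldots,n\}\setminus\{j\}\}$ together with $J_j=0$. The only difference is that you spell out the embedding and properness checks that the paper leaves implicit.
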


\begin{proof} (1) By Proposition~\ref{3.1},   $C=\sum_{j=1}^n J_j$, so that $J_j= 0$.

(2) Set $K_i:=J_{\mathcal{P}_i}$ for $i=1,2$. Then, $K_1\cap K_2=0$, so that we have the extensions $R\subseteq R/K_1\times R/K_2$ and $R/K_i\subseteq\prod_{j\in\mathcal{P}_l}(R/I_j)$ for $l\neq i,\ l\in\{1,2 \}$ leading to the composite $R\subseteq R/K_1\times R/K_2\subseteq\mathcal R$. If $R\subseteq\mathcal R$ has FIP, then so has $R\subseteq R/K_1\times R/K_2$. By Proposition~\ref{3.2}, $R/(K_1+K _2)$ is an FMIR. The second statement follows from (2) and $J_j= 0$. To complete the proof, use Proposition~\ref{3.1} since $C= 0$.
\end{proof} 

The following result shows that the case of a nonlocal Artinian ring $R$ is very different from the local case.

\begin{proposition}\label{3.5} Let $R$ be a ring containing a set of $p> 1$  orthogonal idempotents $\{e_1,\ldots,e_p\}$, generating the  ideal $R$. Then $R$ is an FMIR if $R \subseteq \mathcal{R}$ has FIP for each separating family $\{I_1,\ldots, I_n\}$ of ideals of $R$. In particular, an Artinian nonlocal ring $R$ is an FMIR if  $R \subseteq \mathcal {R}$ has FIP for each separating family  of ideals of $R$. The converse holds if no local ring of $R$ is a SPIR.
\end{proposition}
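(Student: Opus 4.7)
The forward direction I would prove by a single well-chosen instance: apply the hypothesis to the separating family $I_{1}=I_{2}=0$ (legitimate since $0\neq R$ and $0\cap 0=0$). The associated extension is $R\hookrightarrow R/0\times R/0=R^{2}$, which has FIP by assumption, so Corollary~\ref{2.5} immediately forces $R$ to be an FMIR. The orthogonal-idempotent hypothesis $p>1$ plays no essential role here; it is recorded to single out the non-degenerate setting and to deliver the ``In particular'' statement, since the standard decomposition of an Artinian nonlocal ring as a product of at least two local Artinian factors furnishes $p\geq 2$ nontrivial orthogonal idempotents summing to $1$.

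For the converse, assume $R$ is an FMIR in which no local ring is a SPIR. Proposition~\ref{2.3} then writes $R=\prod_{i=1}^{m}R_{i}$, where each $R_{i}$ is a local ring of $R$ that is either a finite local ring or an infinite field. Fix a separating family $\{I_{1},\ldots,I_{n}\}$ of $R$ and set $\mathcal{R}:=\prod_{j=1}^{n}R/I_{j}$. Writing each $I_{j}$ componentwise as $I_{j}=\prod_{i}I_{j,i}$, at the maximal ideal $M_{i}$ of $R$ corresponding to $R_{i}$ one has $R_{M_{i}}=R_{i}$ and $\mathcal{R}_{M_{i}}\cong\prod_{j=1}^{n}R_{i}/I_{j,i}$. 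Since $R$ is semilocal Artinian, $|\mathrm{MSupp}(\mathcal{R}/R)|<\infty$, so by Proposition~\ref{2.7}(2) it suffices to verify FIP at each $M_{i}$. If $R_{i}$ is finite, then $\mathcal{R}_{M_{i}}$, being a finitely generated $R_{i}$-module, is also finite, whence FIP is automatic. If $R_{i}$ is an infinite field, then each $I_{j,i}$ equals $0$ or $R_{i}$, so each factor $R_{i}/I_{j,i}$ is $R_{i}$ or $0$; the relation $\bigcap_{j}I_{j,i}=0$ (inherited from $\bigcap_{j}I_{j}=0$ by projecting on the $i$-component) forces at least one $I_{j,i}=0$, so after discarding the zero factors $\mathcal{R}_{M_{i}}\cong R_{i}^{k}$ for some $k\geq 1$, and FIP follows from Proposition~\ref{2.1}. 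Combining the two cases with Proposition~\ref{2.7}(2) yields FIP for $R\subseteq\mathcal{R}$.

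I do not foresee a genuine obstacle. The forward direction is essentially free once one spots the trivial separating family $\{0,0\}$, and the converse is a routine localization argument anchored by Propositions~\ref{2.3}, \ref{2.1} and~\ref{2.7}. The role of the extra hypothesis ``no local ring of $R$ is a SPIR'' becomes transparent in the localization step: an infinite-SPIR factor $R_{i}$ would give rise to separating families whose localizations are $R_{i}\subseteq R_{i}^{n}$ for arbitrarily large $n$, which lose FIP as soon as $n\geq 3$ by Theorem~\ref{4.2}.
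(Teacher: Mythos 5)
Your proof is correct, but it takes a genuinely different route from the paper's on both halves, so a comparison is worth recording. For the forward direction the paper does not use the family $I_1=I_2=0$: it applies the FIP hypothesis to the separating family $\{Re_1,\ldots,Re_p\}$, deduces from Proposition~\ref{3.4} that each $R/Re_i$ is an FMIR, hence that $S:=\prod_{i=1}^pR/Re_i$ is an FMIR, and then descends the FMIR property along the faithfully flat map $R\to S$ using $IS\cap R=I$ --- this descent step is the only place where the orthogonal idempotents actually enter the paper's argument. Your instantiation at $\{0,0\}$ combined with Corollary~\ref{2.5} (or just Proposition~\ref{2.2}(3)) is shorter, and it also covers a case where the paper's chosen family gives no information: for $p=2$ the conductor of $\{Re_1,Re_2\}$ is $Re_1+Re_2=R$ rather than $0$ (the extension is then the Chinese Remainder isomorphism), so Proposition~\ref{3.4} does not apply there, whereas your family works uniformly. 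You are also right that the idempotent hypothesis is then inessential for this implication. For the converse the paper offers essentially no argument beyond the assertion; your localization proof --- decompose $R$ via Proposition~\ref{2.3} into finite local rings and infinite fields once SPIR factors are excluded, reduce to each maximal ideal by Proposition~\ref{2.7}(2), and conclude by finiteness in the first case and by Proposition~\ref{2.1} applied to $R_i\subseteq R_i^{k}$ in the second (the case $k=1$ being trivial) --- is complete and correct, and is consistent with what Proposition~\ref{3.7} and Theorem~\ref{4.2} say about why the SPIR exclusion cannot be dropped.
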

\begin{proof}  Consider the faithfully flat extension $R \subseteq \prod_{i=1}^p R/Re_i=: S$  with zero conductor (Proposition~\ref{3.1}). If $R \subseteq S$ has FIP, then each $R/Re_i$ is an FMIR by Proposition~\ref{3.4} and so is $S$. Then observe that if $R \to S$ is a faithfully flat ring morphism, $R$ is an FMIR if $S$ is, because $IS \cap R= I$ for each ideal $I$ of $R$. Now if $R$ is Artinian nonlocal, then $R$ has $p> 1$ idempotents generating the ideal $R$ by the Structure Theorem of Artinian rings.
\end{proof}

Now let $(R,M)$ be a local Artinian ring with $|R/M|<\infty$. Then $|R|<\infty$ (see the remark before Lemma~\ref{2.13}), so that $R\subseteq \mathcal R$ has $\mathrm{FIP}$ for each separating family, since $|\mathcal R|<\infty$. 

We know that $|\mathrm{MSupp}(S/R)|<\infty$ if $R\subseteq S$ has FIP (Proposition~\ref{2.7}(1)). By  Proposition~\ref{2.7} and former results of the section,  the FIP property study can be reduced to the next proposition hypotheses.

If $(R,M)$ is an Artinian local ring, we denote by $n(R)$ the nilpotency index of $M$.

\begin{proposition}\label{3.6} Let $(R,M)$ be an Artinian local ring with $|R/M|=\infty$  and  a separating family $\{I_1,\ldots,I_n\}$ of ideals, with $C=0$.

\noindent We set $ T:= R+M\mathcal R,\ \mathcal{C}:=(R:T),\ n(R/\mathcal{C}) =p$, and for each $i>0,\ M_i:=M+TM^{i}=M+\mathcal RM^{i+1}$, $R_i:=R+TM^{i}=R+\mathcal RM^{i+1}$. Then, 
\begin{enumerate}

\item $T=\substack{+\\\mathcal R}R$ and $R\subseteq \mathcal R$ has $\mathrm{FIP}$ if and only if $R\subseteq T$ has FIP. 

\item $\mathcal{C}=(0:M)$.

\item $R\subseteq T$ has FIP if and only if either $R=T$, or $R_1=T$, or $R_1\subset T$ is minimal (ramified), with, in the two last situations, either $M=(R:T)$, or ${\mathrm L}_R(M_i/M_{i+1})=1$ for all $1\leq i\leq p-1$.
\end{enumerate}

The case $R=T$ corresponds to an extension of the form $K\subseteq K^n$, where $K$ is a field, and the case $M=\mathcal{C}$ to $M^2=0$.
 \end{proposition}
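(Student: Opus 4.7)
My plan handles the three claims in sequence, with Part (3) being the main technical challenge.

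For Part (1), I first exploit the canonical idempotents $e_1,\ldots,e_n \in \mathcal{R}$ to identify $M\mathcal{R}$ with $\prod_j(M/I_j)$, the nilradical of $\mathcal{R}$: each $(m_j+I_j)_j \in \prod_j(M/I_j)$ equals $\sum_i m_i e_i$. The identity $R \cap M\mathcal{R} = M$ (which uses $I_j \subseteq M$) then makes $T = R + M\mathcal{R}$ quasi-local with maximal ideal $M\mathcal{R}$ and residue field $K := R/M$, rendering $R \subseteq T$ subintegral. Modulo the common ideal $M\mathcal{R}$, the extension $T \subseteq \mathcal{R}$ reduces to the diagonal $K \subseteq K^n$, seminormal by Proposition~\ref{2.1}; so $T \subseteq \mathcal{R}$ is seminormal and therefore $T = \substack{+\\\mathcal R}R$. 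The FIP equivalence follows from Proposition~\ref{1.6}(3) applied to the finite infra-integral extension $R \subseteq \mathcal{R}$ (Proposition~\ref{3.1}(1)), using that $R/M$ is infinite.

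For Part (2), $r \in \mathcal{C}$ means $rM\mathcal{R} \subseteq R$. Testing on $(rm)e_i$ with $m \in M$: its presence in diagonally-embedded $R$ requires some $a \in R$ with $a \in I_j$ for $j \neq i$ and $a \equiv rm \pmod{I_i}$, i.e.\ $a \in J_i$. Since $C = \sum_j J_j = 0$ by Proposition~\ref{3.1}(2), $J_i = 0$ and thus $a = 0$, forcing $rm \in I_i$ for every $i$, hence $rm \in \cap_i I_i = 0$ and $r \in (0:M)$. The reverse inclusion is immediate. Note that $M = \mathcal{C}$ is equivalent to $M^2 = 0$ whenever $M \neq 0$, since $(0:M) \subsetneq R$ forces $(0:M) \subseteq M$.

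For Part (3), Proposition~\ref{1.6}(1) forces $[R,T]$ linearly ordered whenever FIP holds (subintegral, $R/M$ infinite). The case $R = T$ is equivalent to $R$ being a field (any $me_1 \in R$ diagonally demands $m \in J_1 = 0$), giving the extension $K \subseteq K^n$ of Proposition~\ref{2.1}. For $R \subsetneq T$, I analyze the descending filtration $T = R_0 \supseteq R_1 \supseteq \cdots \supseteq R_p = R$ and the parallel chain $M_0 = M\mathcal{R} \supseteq M_1 \supseteq \cdots \supseteq M_p = M$. For $i \geq 1$, $M$ annihilates $M_i/M_{i+1}$ (since $M\cdot M_i \subseteq M^2 + (M\mathcal{R})^{i+2} \subseteq M_{i+1}$), so this is a $K$-vector space and intermediate subalgebras $S \in [R_{i+1},R_i]$ correspond to $K$-subspaces of the graded piece. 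Linear orderedness with $K$ infinite then forces $\mathrm{L}_R(M_i/M_{i+1}) \leq 1$ for $1 \leq i \leq p-1$. The top layer $[R_1,T]$ is governed separately by whether $R_1 = T$ or $R_1 \subsetneq T$ with $R_1 \subset T$ minimal ramified (the only subintegral minimal type available). The alternative hypothesis $M = \mathcal{C}$ (i.e., $M^2 = 0$) collapses $M_i = M$ for every $i \geq 1$, and FIP then follows from the $\Delta_0$-type argument of Proposition~\ref{1.4} applied with conductor $M\mathcal{R}$.

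The main obstacle lies in the converse direction of Part (3): showing that under each admissible configuration the chain of $R_i$'s and $M_i$'s, together with the bridge $R_1 \subset T$ when present, truly exhausts the whole of $[R,T]$. This requires a careful analysis of how an arbitrary $R$-subalgebra of $T$ meets the nilpotency filtration of $M\mathcal{R}$, leveraging the explicit idempotent decomposition of $\mathcal{R}$ to rule out non-standard subalgebras; the disjunction ``$M=\mathcal{C}$ or $\mathrm{L}_R(M_i/M_{i+1})=1$'' reflects the two distinct regimes in which the subalgebra lattice remains tractable.
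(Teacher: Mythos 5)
Parts (1) and (2) of your proposal are sound. Your computation of $\mathcal{C}=(0:M)$ via the idempotents and $J_i=0$ is exactly the ``easy calculation'' the paper alludes to, and your identification $T=\substack{+\\ \mathcal R}R$ (local $T$ with maximal ideal $M\mathcal R$, hence $R\subseteq T$ subintegral, and $T\subseteq\mathcal R$ seminormal because it is so modulo the common ideal $M\mathcal R$) is a legitimate, more self-contained route than the paper's, which simply cites \cite[Theorem 5.18]{DPP2}; the reduction of FIP to $R\subseteq T$ via Proposition~\ref{1.6}(3) is the same in both.

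The genuine gap is in part (3), and it is the direction that carries all the content: you never prove that the stated conditions imply FIP. Your forward direction (FIP $\Rightarrow$ $[R,T]$ linearly ordered $\Rightarrow$ each graded piece $M\mathcal R/M_1$ and $M_i/M_{i+1}$ is a $K$-line or zero) is essentially right, modulo two points you leave implicit: (a) linear orderedness only yields ${\mathrm L}_R(M_i/M_{i+1})\le 1$, and to get the stated equality for $1\le i\le p-1$ you must also show $M_i\neq M_{i+1}$, which needs a Nakayama argument together with the observation that $\mathcal RM^{i+1}\subseteq R$ forces $M^{i+1}\subseteq\cap_jI_j=0$; (b) the claim that the top layer is ``governed separately'' by minimality of $R_1\subset T$ needs the same submodule correspondence argument, using $(M\mathcal R)^2\subseteq M_1$. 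But for the converse, controlling the graded pieces of the standard filtration $R=R_p\subseteq\cdots\subseteq R_1\subseteq T$ does not by itself force $[R,T]$ to be a chain: an arbitrary $S\in[R,T]$ corresponds to a multiplicatively closed $R$-submodule of $M\mathcal R$ containing $M$ that need not be comparable to any $M_i$, so one-dimensionality of each quotient $M_i/M_{i+1}$ does not rule out subalgebras transverse to the filtration. Your proposal names this exactly (``the main obstacle\dots requires a careful analysis'') but supplies no argument. The paper avoids this entirely by invoking the general characterization \cite[Theorem 5.18]{DPP2} of FIP for such subintegral extensions (its conditions (i) and (ii)), and then devotes most of its proof of (3) to translating condition (ii) --- the existence of $\alpha$ with $T=R_1[\alpha]$, $\alpha^3\in TM$, plus the analogous condition one level down --- into the equivalence with ``$R_1=T$ or $R_1\subset T$ minimal ramified,'' in both directions. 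Without either that citation or a substitute lattice argument, your part (3) is only half proved.
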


\begin{proof} Let $\{e_1,\ldots,e_n\}$ be the canonical basis of the $R$-module $\mathcal{R}$. 
Since  $(R:\mathcal R) = 0$,  $J_j=0$ for each $j\in\{1, \ldots,n\}$ by Proposition~\ref{3.4}. 

(1) $T= \substack{+\\\mathcal R}R$ follows from \cite[Theorem 5.18]{DPP2} since $\mathrm{Rad}(\mathcal R)=M\mathcal R$ and $R\subseteq \mathcal{R}$ has FCP by Proposition~\ref{3.1}. Since $R\subseteq\mathcal R$ is infra-integral, $R\subseteq \mathcal R$ has $\mathrm{FIP}$ if and only if $R\subseteq T$ has FIP by Proposition~\ref{1.6}. 

(2) is an easy calculation, because each $J_j = 0$, $\cap_{j=1}^n I_j = 0$ and  the unit element of $\mathcal R$ is $e_1+\cdots +e_n$.

(3) Since $R\subseteq R_i\subseteq T$ is finite and subintegral, $(R_i,M_i)$ is local Artinian for each $i>0$. We have $TM=M+\mathcal RM^2=M_1\subseteq\mathcal RM \in \mathrm{Max}(T)$, $R_1 =R+\mathcal RM^2,\ R_2=R+\mathcal RM^3$ and $M_2=M+\mathcal RM^3$. Because $R/M$ is infinite, \cite[Theorem 5.18]{DPP2}, applied with $S:=\mathcal R $, gives that $R\subseteq T$ has FIP if and only if the next two properties hold:

(i) Either $R=T$, or $M=(R:T)$, or ${\mathrm  L}_R(M_i/M_{i+1})=1$ for all $1\leq i\leq p-1$;
 
(ii) If $R\neq T$, there exists $\alpha\in T$ such that $T=R_1[\alpha]$ and $\alpha^3\in TM$, and, with $T':=R_1[\alpha^2]$ and $T'':=R+T'M$, there exists $\beta\in T$ such that $T'=T''[\beta]$ and $ \beta^3\in T'M$.
 
Assume that $T\neq R,R_1$, so that $\alpha\not\in R_1$. We first show that (ii) implies that $R_1\subset T$ is minimal. Let $\alpha\in T$ such that $\alpha^3\in TM=M_1\subseteq\mathcal RM$, giving $\alpha\in\mathcal RM$, so that $\alpha^2\in\mathcal RM^2\subseteq M_1$ and $\alpha M_1\subseteq\mathcal R MM_1=\mathcal RM(M+\mathcal RM^2)\subseteq\mathcal RM^2\subseteq M_1$. Then, $R_1\subset T$ is minimal (ramified) in view of \cite[Theorem 2.3(c)]{DPP2}.

Conversely, we show that $R_1\subset T$ is minimal (ramified), with either $M=(R:T)$, or ${\mathrm L}_ R(M_i/M_{i+1})=1$ for all $1\leq i\leq p-1$ implies (ii). Actually, (i) already holds. Since $R_1\subset T$ is minimal, there is $\alpha\in T$ such that $T=R_1[\alpha]$ and $\alpha^2\in M_1\subset R_1$, with $\alpha M_1\subseteq M_1$. Then, $\alpha^3\in M_1=TM$. Now, we can rewrite (ii) as $T'=R_1[\alpha^2] =R_1$ and $T''=R+T'M=R+R_1M=R+\mathcal RM^3=R_2$. Assume that $M\neq(R:T)=(0:M)$, so that $ M^2\neq 0$. Then, $M_1^2=(M+\mathcal RM^2)^2\subseteq M+\mathcal RM^3=M_2\subset M_1$ (because ${\mathrm L}_R(M_1/M_2)=1$) implies that $R_2\subset R_1$ is minimal ramified by Theorem~\ref{1.2}(c). Arguing as for $\alpha$, we obtain some $\beta\in T$ such that $T'=T''[\beta]$ and $ \beta^3\in T'M$ and (ii) holds.

If $T=R_1$, it is enough to take $\alpha=0$ to get (ii). 

If $R=T$, then $I_j=M$ for each $j$ entails  $M= \cap_{j=1}^nI_j=0$ and $R$ is a field.  Then  $R\subseteq \mathcal{R}$ is  of the form $K\subseteq K^n$, where $K$ is a field, and has  FIP  (see Proposition~\ref{2.1}). Assume that $M=\mathcal{C}$, then $M^2=0$.
\end{proof} 

By Proposition~\ref{3.4}, we know that when $R\subseteq\mathcal{R}$ has FIP, then $R/I_j$ is an FMIR for each $j$. It is natural to ask if the converse holds, and if not, what conditions are needed to get the FIP property. We consider here a simple case which already gives  a rather complicated result.

\begin{proposition}\label{3.6 bis} Let $(R,M)$ be an Artinian local ring such that $M^2= 0$ and $|R/M|=\infty$. Let $\{I_1,\ldots,I_n\}$ be a separating family of ideals, with conductor $0$ and $n\geq 3$. Then, $ R\subseteq\mathcal{R}$ has FIP if and only if $R/I_j$ is an FMIR and $M=I_j+\cap_{k\neq j,l}I_k$, for each $j,l\in\{1,\ldots,n\},\ j\neq l$.  
 \end{proposition}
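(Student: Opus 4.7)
The plan is to apply Proposition~\ref{3.6} directly, reducing the FIP question for $R\subseteq\mathcal{R}$ to a concrete dimension computation on the seminormalization $T:=R+M\mathcal{R}$. Because $M^2=0$, we have $\mathcal{C}=(0:M)=M$ and $R_1=R+\mathcal{R}M^2=R$. Since $R\subseteq\mathcal{R}$ is infra-integral by Proposition~\ref{3.1}(1), Proposition~\ref{1.6}(3) lets me replace $\mathcal{R}$ by $T=\substack{+\\ \mathcal{R}}R$, and Proposition~\ref{3.6}(3), with $R_1=R$ and $M=(R:T)$ automatic, says $R\subseteq T$ has FIP iff $R=T$ or $R\subset T$ is minimal ramified. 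The first alternative forces $M\subseteq I_j$ for every $j$, hence $M=\bigcap_j I_j=0$, which reduces to the $K\subseteq K^n$ case of Proposition~\ref{2.1} where both sides of the equivalence hold trivially. So I may assume $M\neq 0$.

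Next I compute $T/R$. Because $\bigcap_j I_j=0$, the diagonal $M\to\prod_j M/I_j$ is injective with image exactly $\bar{R}\cap M\mathcal{R}$, giving an exact sequence of $R/M$-vector spaces (using $M^2=0$)
$$0\longrightarrow M\longrightarrow\prod_{j=1}^{n}(M/I_j)\longrightarrow T/R\longrightarrow 0.$$
Hence $\dim_{R/M}(T/R)=\sum_j d_j-d$, where $d_j:=\dim_{R/M}(M/I_j)$ and $d:=\dim_{R/M}M$. Because $(M\mathcal{R})^2=M^2\mathcal{R}=0$, every $R$-submodule $V\subseteq M\mathcal{R}$ satisfies $V^2=0$, so $R+V$ is automatically a subring. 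Thus $[R,T]$ is in bijection with $R/M$-subspaces of $T/R$; with $|R/M|=\infty$, $R\subseteq T$ has FIP iff $\dim_{R/M}(T/R)\leq 1$, which under $R\neq T$ means $\sum_j d_j=d+1$.

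Now I translate this numerical criterion into the stated ideal-theoretic conditions. The FMIR property of $R/I_j$ is equivalent to $d_j\leq 1$, because the ideals of $R/I_j$ contained in $M/I_j$ are exactly the $R/M$-subspaces (using $M^2=0$), and $|R/M|=\infty$ forces the finite-ideal condition to $d_j\leq 1$. For the forward direction, I apply Proposition~\ref{3.4}(2) to the partition $\mathcal{P}_1=\{j\}$ to obtain the FMIR conclusion, and to the partition $\mathcal{P}_1=\{j,l\}$ to obtain that $R/(I_j\cap I_l+J_{j,l})$ is an FMIR, where $J_{j,l}:=\bigcap_{k\neq j,l}I_k$; combining this with $I_j\cap J_{j,l}=J_l=0$ (from the conductor-zero hypothesis via Proposition~\ref{3.4}(1)) and the equality $\sum_j d_j=d+1$ forces $M=I_j+J_{j,l}$. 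For the converse, the equalities $M=I_j+J_{j,l}$ together with $I_j\cap J_{j,l}=0$ give $\dim J_{j,l}=d_j$, and a counting argument over all such $J_{j,l}$ recovers $\sum_j d_j=d+1$, hence minimality of $R\subset T$ and FIP.

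The hard part will be this bidirectional translation between the numerical identity $\sum_j d_j=d+1$ (with $d_j\leq 1$) and the collection of equalities $M=I_j+J_{j,l}$ for all $j\neq l$; the bookkeeping is especially subtle when some $I_j$ equal $M$ (so $d_j=0$), and the partitions of $\{1,\ldots,n\}$ used to invoke Proposition~\ref{3.4}(2) have to be chosen carefully to isolate the hyperplane-like subfamilies of the $I_j$'s.
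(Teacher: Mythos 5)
Your reduction is correct and takes a genuinely different, cleaner route than the paper. The paper fixes a candidate subalgebra $R'_j=R+((M/I_j)\times 0)$ and chases elements to decide when $R\subset T$ is minimal; you instead linearize everything: since $(M\mathcal R)^2=0$, the set $[R,T]$ is in bijection with the $R/M$-subspaces of $T/R$, and the exact sequence $0\to M\to\prod_j(M/I_j)\to T/R\to 0$ gives $\dim_{R/M}(T/R)=\sum_j d_j-d$. Combined with Proposition~\ref{3.6}(1), this yields the correct criterion: for $M\neq 0$, $R\subseteq\mathcal R$ has FIP if and only if $\sum_j d_j=d+1$. Everything up to that point checks out, and your converse sketch is also salvageable: the identities $M=I_j+J_{j,l}$ with $I_j\cap J_{j,l}=J_l=0$ give $\dim J_{j,l}=d_j$ for all ordered pairs, and since $J_{j,l}=J_{l,j}$ this forces all $d_j$ equal, hence all equal to $1$ when $M\neq 0$; a rank count on the dual functionals then gives $n=d+1=\sum_j d_j$.

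The genuine gap is the deferred ``translation'' in the forward direction: it is not bookkeeping, it is false, because the displayed condition is strictly stronger than your (correct) numerical criterion. Take $R=k[X]/(X^2)$ with $k$ infinite, $M=Rt$, and $I_1=I_2=0$, $I_3=M$. This is a separating family of proper ideals with conductor $0$ (each $J_j=0$), $n=3$, $M^2=0$, and $\sum_jd_j=1+1+0=2=d+1$, so $R\subseteq\mathcal R$ has FIP --- as Proposition~\ref{3.7} confirms directly, since $I_j=M$ for $n-2$ of the ideals. Yet for the ordered pair $(j,l)=(1,3)$ one has $I_1+\cap_{k\neq 1,3}I_k=I_1+I_2=0\neq M$. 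So no argument can ``force $M=I_j+J_{j,l}$'' for all ordered pairs out of $\sum_jd_j=d+1$; indeed the bounds $d_j+d_l-1\leq\dim J_{j,l}\leq\min(d_j,d_l)$ (the first from $\sum_kd_k=d+1$, the second from $J_{j,l}\cap I_j=J_l=0$) show $\dim J_{j,l}=0\neq d_j$ whenever $d_j=1$ and $d_l=0$. Your criterion thus exposes that the ``only if'' half of the statement fails precisely for pairs with $I_j\neq M=I_l$ --- and these are exactly the pairs the paper's own proof never treats: it derives the identity for pairs whose second index is the fixed $j$ with $I_j\neq M$, for pairs with both ideals different from $M$, and trivially for pairs whose first ideal equals $M$, but never for first ideal $\neq M$ and second ideal $=M$. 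So your plan proves the ``if'' half, but the ``only if'' half cannot be completed without first restricting the stated condition (for instance to pairs $(j,l)$ with $I_l\neq M$).
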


\begin{proof} Set $T:=R+M\mathcal R,\ \mathcal{C}:=(R:T)$, and for each $i>0,\ M_i:=M+TM^{i}=M+\mathcal RM^{i+1},\ R_i:=R+TM^{i}=R+\mathcal RM^{i+1}$. Since $M^2=0$, we get that $R_1=R$ and $ M_1=M=M_2$. Then, applying Proposition~\ref{3.6}, we have that $R\subseteq\mathcal R$ has FIP if and only if $R\subseteq T$ has FIP, if and only if either $R=R_1=T$, or $R\subset T$ is minimal (ramified), with $M=(R:T)$. This last condition is always satisfied since $\mathcal{C}=(0:M)$. Then, $R\subseteq \mathcal R$ has $\mathrm{FIP}$ if and only if  either $R=R_1=T$, or $R\subset T$ is minimal. 

We begin to remark that $M=I_k$ for at least $n-1$ ideals $I_k$ implies that $M=0$, so that $R$ is a field and we are in the situation of Proposition~\ref{2.1}. Indeed, if $n-1$ ideals $I_k$ are equal to $M$, for instance $I_1,\ldots,I_{n-1}$, we get that $\cap_{k\neq n}I_k=M=0$ since $(R:\mathcal R)=0$. In particular, we get that $I_n=0$. Hence, the assertion of Proposition~\ref{3.6 bis} holds.

So, in the following, we may assume that there exist some $I_j,I_l\neq M,\ j\neq l$. Consider the following $R$-subextension of $(R/I_j)\times R$ defined by $R'_j:=R+((M/I_j)\times 0)=\{(\overline x+\overline m,x)\mid x\in R,\ m\in M\}$. Since $\cap_{k\neq j}I_k=0$, we have the ring extension $R\subseteq R+\prod_{k\neq j}M/I_k$. An easy calculation shows that we have a ring extension $R'_j\subseteq T$. Moreover, $ R\neq R'_j$ since $(\overline m,0)\in R'_j\setminus R$ for any $m\in M\setminus I_j$. In particular, $R\neq T$. The canonical map $\varphi:R'_j\to T$ is defined by $\varphi(\overline x+\overline m,x)=(\overline x,\ldots,\overline x)+(\overline m,\overline 0,\ldots,\overline 0)$ (after reindexing the components).

Assume first that $R\subseteq\mathcal R$ has FIP, so that $R\subset T$ is a minimal extension. Then, $ R\neq R'_j$ implies that $R'_j=T$ and $\varphi$ is surjective. Let $y\in M$ and $j'\in\{1,\ldots,n\},\ j'\neq j$. Consider $(\overline 0,\ldots,\overline y,\ldots,\overline 0)\in T$, where all the coordinates are $\overline 0$ except possibly the $j'$th which is $\overline y$. Then, there exist $x\in R,\ m\in M$ such that $(\overline 0,\ldots,\overline y,\ldots,\overline 0)=(\overline x,\ldots,\overline x)+(\overline m,\overline 0, \ldots,\overline 0)$. This gives $y-x\in I_{j'},\ x+m\in I_j$ and $x\in I_k$ for each $k\neq j,j'$. Then, $x\in\cap_{k\neq j,j'}I_k$ and $y\in I_{j'}+\cap_{k\neq j,j'}I_k$, giving $M=I_{j'}+\cap_{k\neq j,j'}I_k$ for any $j' \neq j$. Since there is some $l\neq j$ such that $M\neq I_l$, the same reasoning gives that $M=I_j+\cap_ {k\neq j,l}I_k$. At last, if there exist $j',l'\in\{1,\ldots,n\},\ j'\neq l'$ such that $M\neq I_{j'},I_{l'}$, the same reasoning gives again $M=I_{j'}+\cap_{k\neq j',l'}I_k$. But, when $M=I_{j'}$, we have $M=I_{j'}+\cap_{k\neq j',l'}I_k$ whatever is $I_{l'}$.

Conversely, assume that $R/I_{j'}$ is a FMIR and $M=I_{j'}+\cap_{k\neq j',l'}I_k$, for each $j',l'\in\{1,\ldots, n\},\ j'\neq l'$, with $M\neq I_j$ for some $j$. We are going to show that $R\subset R'_j$ is minimal ramified and that $R'_j=T$.

Since $R/I_j$ is an FMIR with $|R/M|=\infty$ and $M\neq I_j$, there exists some $z\in M\setminus I_j$ such that $M/I_j=(R/I_j)\overline z$, with $\overline z\neq 0$ and $\overline z^2=0$. Set $t:=(\overline z,0)\in R'_j\setminus R$. Using the properties of $R'_j$, we get that $R'_j=R[t]$, with $t^2=0\in M,\ tM=0\subseteq M$, so that $R\subset R'_j$ is a minimal ramified extension by \cite[Theorem 2.3]{DPP2}. 

Let $j'\neq j$ and $x\in M$. Since $M=I_{j'}+\cap_{k\neq j',j}I_k$, there exist $a\in I_{j'}$ and $b\in\cap_{k\neq j',j}I_k$ such that $x=a+b$. Then, $\overline x=\overline b$ in $M/I_{j'}$. It follows that we get $(\overline 0,\ldots,\overline x,\ldots,\overline 0)=(\overline b,\ldots,\overline b,\ldots,\overline b)+(\overline 0,\ldots,-\overline b,\ldots,\overline 0)$, where $\overline x$ stands at the $j'$th component in the first element, and $-\overline b$ stands at the $j$th component in the last element. Indeed, for $k\neq j,j'$, we have $\overline b=\overline 0$ since $b\in\cap_{k\neq j',j}I_k$. We have $(\overline b,\ldots,\overline b,\ldots,\overline b)\in R$ and $(\overline 0,\ldots,-\overline b,\ldots,\overline 0) \in(M/I_j)\times 0$, so that $(\overline 0,\ldots,\overline x,\ldots,\overline 0)\in R'_j$. This holds for any $j' \neq j$ and obviously for $(\overline 0,\ldots,\overline x,\ldots,\overline 0)$ where $\overline x$ stands at the $j$th component by definition of $R'_j$. Then, $T=R+\prod_k(M/I_k)=R+((M/I_j)\times 0)=R'_j$, giving that $R\subset T$ is minimal, so that $R\subset \mathcal{R}$ has FIP. 
\end{proof} 

\begin{remark}\label{3.6ter} When $n=3$, the condition of Proposition~\ref{3.6 bis} becomes $M=I_j+I_l$, for each $j,l\in\{1,2,3 \},\ j\neq l$. Here is an example where $I_j\not\subseteq I_l$ for each $j,l\in\{1,2,3 \},\ j\neq l$.

Let $k$ be an infinite field, and set $R:=k[X,Y]/(X,Y)^2=k[x,y]$, for some indeterminates $X,Y$. Then, $R$ is an Artinian local ring with maximal ideal $M:=(x,y)$ such that $M^2= 0$ and $|R/M|=\infty$. Set $I_j:= k(x+\lambda_jy)$, where $\lambda_1,\lambda_2$ and $\lambda_3$ are three distinct elements of $k$. Then, $I_j\cap I_l=0$ for each $j,l\in\{1,2,3 \},\ j\neq l$. We have $R/I_j=k[\overline x]$, which is a SPIR, although $R$ is not a SPIR, with $M/I_j=k\overline x$.  
\end{remark}

In the following, we are going to consider a kind of converse for Proposition~\ref{3.4}, taking for $R$ a local FMIR. By Proposition~\ref{2.3}, either $R$ is a field, or a finite ring, or a $\Sigma$PIR. The case where $R$ is a field is Proposition~\ref{2.1}. If $R$ is a finite ring, $\mathcal{R}$ being  $R$-module finite, $\mathcal{R}$ is also a finite ring, so that $R\subseteq\mathcal{R}$ has FIP. The last case to consider  is a $\Sigma$PIR $R$. 

\begin{proposition}\label{3.7} Let $(R,M)$ be a $\Sigma$PIR and a separating family  $\{I_1, \ldots,I_n\}$ of ideals,  with  conductor $0$. Then, $R\subseteq\mathcal{R}$ has FIP if and only if either $n=2$,  or $I_j=M$ for $n-2$ ideals $I_j$. 
 \end{proposition}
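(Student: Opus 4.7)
The plan is to exploit the SPIR structure of $R$ to reduce the hypothesis on $\{I_1,\dots,I_n\}$ to a combinatorial condition on the $I_j$'s, then apply Proposition~\ref{3.6} for sufficiency and a direct construction of infinitely many subalgebras for necessity. Write $M=Rt$ with $t^P=0$ and $t^{P-1}\neq 0$; since the ideals of $R$ form the totally ordered chain $R\supset M\supset\cdots\supset M^P=0$, each $I_j=M^{a_j}$ for some $1\le a_j\le P$, and $J_j=\cap_{k\ne j}I_k$ equals the smallest $I_k$ with $k\ne j$. By Proposition~\ref{3.1}(2), the assumption that the conductor is $0$ says $J_j=0$ for each $j$, i.e., for each $j$ some $k\ne j$ has $I_k=0$; equivalently, at least two of the $I_j$'s vanish. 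For $n=2$ this forces $I_1=I_2=0$, so $\mathcal R=R\times R$ and FIP holds by Corollary~\ref{2.5} since an SPIR is an FMIR.

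Assume now $n\ge 3$. For sufficiency, say $I_1=I_2=0$ and $I_j=M$ for $j\ge 3$, so $\mathcal R=R\times R\times k^{n-2}$ with $k=R/M$, and $\mathcal RM^i=M^i\times M^i\times 0^{n-2}$ because $Mk=0$. I would invoke Proposition~\ref{3.6}: the seminormalization is $T=R+M\mathcal R$ and $\mathcal C=(0:M)_R=M^{P-1}$, so $p:=n(R/\mathcal C)=P-1$. A direct computation using the explicit form of $R$, $M\mathcal R$, $R_1=R+\mathcal RM^2$ and $M_i=M+\mathcal RM^{i+1}$ shows $T/R_1\cong M\mathcal R/(M+\mathcal RM^2)\cong M/M^2$ of length $1$, so $R_1\subset T$ is minimal (ramified since it is subintegral as a subextension of $R\subset T$); similarly $M_i/M_{i+1}\cong M^{i+1}/M^{i+2}$ has length $1$ for $1\le i\le P-2=p-1$. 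The hypotheses of Proposition~\ref{3.6}(3) are thus satisfied, giving FIP.

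For necessity, suppose FIP holds but the above configuration fails. Then after reindexing one can arrange $I_1=I_2=0$ and $I_3=M^a$ with $2\le a\le P$ (this covers both the case of a third zero $I_j$, where $a=P$, and the case where some nonzero $I_j$ exceeds $M$). The canonical projection $\pi:\mathcal R\twoheadrightarrow R\times R\times R/M^a$ onto the first three factors is a surjective $R$-algebra map, and pulling back through $\pi$ gives an injection $[R,R\times R\times R/M^a]\hookrightarrow[R,\mathcal R]$; hence FIP for $R\subseteq\mathcal R$ would imply FIP for $R\subseteq R\times R\times R/M^a$. To contradict this, write $\tilde t$ for the image of $t$ in $R/M^a$ and set $x_\alpha:=(0,t^{P-1},\alpha\tilde t^{a-1})$ for $\alpha\in R$. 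Since $P\ge 2$ and $a\ge 2$ imply $2(P-1)\ge P$ and $2(a-1)\ge a$, one has $x_\alpha^2=0$, so $R_\alpha:=R+Rx_\alpha$ is an $R$-subalgebra. A routine calculation using $(0:t^{P-1})_R=M$ (forcing the coefficient $s$ in any equation $x_\alpha=r+sx_\beta$ to satisfy $s-1\in M$) together with $M\tilde t^{a-1}=0$ in $R/M^a$ reduces $R_\alpha=R_\beta$ to $(\alpha-\beta)\tilde t^{a-1}=0$, equivalently $\bar\alpha=\bar\beta$ in $k$; since $k$ is infinite we obtain infinitely many distinct $R_\alpha$'s, violating FIP.

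The main obstacle is the sufficiency argument, where one must keep careful track of the shift $p=P-1$ (nilpotency of $R/\mathcal C$) versus $P$ (nilpotency of $R$) when checking the range of indices in Proposition~\ref{3.6}(3); once the explicit descriptions of $T$, $R_1$ and $M_i$ inside $R\times R\times k^{n-2}$ are recorded, both length-one conditions $\mathrm L_R(T/R_1)=1$ and $\mathrm L_R(M_i/M_{i+1})=1$ follow from the elementary identification $M^{i+1}/M^{i+2}\cong k$ in the SPIR $R$.
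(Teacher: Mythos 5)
Your proof is correct and follows essentially the same route as the paper's: the conductor-zero condition forces at least two of the $I_j$ to vanish, the case $n=2$ is Corollary~\ref{2.5}, sufficiency for $n\ge 3$ goes through Proposition~\ref{3.6} (the paper identifies $T=R+(M\times M)$ with a subring of $R^2$ and quotes Corollary~\ref{2.5}, whereas you verify the length conditions of Proposition~\ref{3.6}(3) directly --- both work), and necessity uses the same family $R_\alpha=R+R(t^{P-1}e_2+\alpha t^{a-1}e_3)$ together with Lemma~\ref{2.13}. The only cosmetic difference is your preliminary projection onto three factors, which the paper avoids by working in the full product with the remaining coordinates set to $0$.
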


\begin{proof} For $n=2$, we get $I_1=I_2=0$ and Corollary~\ref{2.5} gives that $R\subseteq R/I_1\times R/I_2$ has FIP.

Assume that $n>2$. The ideals of the SPIR $R$  are linearly ordered. Thus  we can assume $I_1 \subseteq\cdots\subseteq I_j\subseteq\cdots\subseteq I_n$. By Proposition~\ref{3.4}, we get that $J_j=0$ for each $j\in\{1,\ldots,n\}$. Hence, for $j=1$, we get $I_2= 0$ and $I_1= 0$ for $j\neq 1$. Moreover, there is some $t\in M$ such that $M=Rt$, with $t^p=0,\ t^{p-1}\neq 0$ for some positive integer $p>1$ since $R$ is not a field, and, for each $j\in\{1,\ldots,n\}$, there is an integer $p_j> 0$ such that $I_j=Rt^{p _j}$, with $I_ j\neq Rt^{p_j-1}$. In particular, we have $p=p_1=p_2\geq\cdots\geq p_j\geq\cdots\geq p_n$. 

Assume that $I_3\neq M$, whence $p_3>1$. Let $\{e_1,\ldots,e_n\}$ be the canonical basis of $\mathcal {R}$ over $R$ and $\mathcal{F}$ a set of representative elements of $R/M$. For each $\alpha\in\mathcal {F}$, set $R_{\alpha}:=R+R(t^{p-1}e_2+\alpha t^{p_3-1}e_3)$, which is an $R$-subalgebra of $\mathcal {R}$. Let $\alpha,\beta\in\mathcal{F},\ \alpha\neq\beta$, so that $\alpha-\beta\not\in M$. Assume that $R_ {\alpha}=R_{\beta}$. Then, $t^{p-1}e_2+\alpha t^{p_3-1}e_3\in R_{\beta}$, so that there exist $a,b\in R$ such that $t^{p-1}e_2+\alpha t^{p_3-1}e_3=a\sum_{j=1}^ne_j+b(t^{p-1}e_2+\beta t^{p_3-1}e_3)$. This gives $a=0,\ t^{p-1}(1-b)=0\ (*)$ and $t^{p_3-1}(\alpha-b\beta)\in I_3\ (**)$.  But we get $1-b\in M$ by $(*)$ and $\alpha-b\beta\in M$ by $(**)$, so that $\alpha-\beta\in M$, a contradiction; whence $R_{\alpha}\neq R_{\beta}$, and $R\subseteq\mathcal{R}$ has not FIP by Lemma~\ref{2.13}.

Now, assume that $n>2$ and $I_j=M$ for all $j\geq 3$. Using the notation of Proposition~\ref{3.6}, we get that $T=R+(M\times M)\subseteq R^2$. But $R\subseteq R^2$ has FIP by Corollary~\ref{2.5}, so that $R\subseteq T$ has FIP, inducing that $R\subseteq\mathcal{R}$ has FIP by Proposition~\ref{3.6}.
\end{proof}

\begin{corollary}\label{3.8} Let $(R,M)$ be a quasi-local ring such that $|R/M|=\infty$, and a separating family $\{I_1,\ldots,I_n\}$ of ideals of $R$. Assume that $R/(\sum_{i=1}^n J_j)$ is a SPIR. Then, $R\subseteq \mathcal{R}$ has FIP if and only if either $n=2$,  or $I_j+J_j=M$ for $n-2$ ideals $I_j+J_j$. 
 \end{corollary}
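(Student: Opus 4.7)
The plan is to reduce the statement to Proposition~\ref{3.7} via Lemma~\ref{3.3}, by passing to the quotient by the conductor.

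Set $C := (R:\mathcal R)$. By Proposition~\ref{3.1}(2), $C = \sum_{j=1}^n J_j$, so the hypothesis tells us that $\overline R := R/C$ is a SPIR. First I would apply Lemma~\ref{3.3}, which gives that $R \subseteq \mathcal R$ has FIP if and only if the zero-conductor extension $\overline R \subseteq \prod_{j=1}^n R/(I_j+J_j)$ has FIP. Writing $\overline I_j := (I_j + J_j)/C$, the family $\{\overline I_1,\dots,\overline I_n\}$ is a separating family of ideals of $\overline R$ with conductor zero (by construction of the reduction in Lemma~\ref{3.3}).

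Next I would verify that $\overline R$ satisfies the hypotheses of Proposition~\ref{3.7}, namely that $\overline R$ is a $\Sigma$PIR. Since $\overline R$ is a SPIR by assumption, it suffices to note that $\overline R$ is infinite; this follows from $C \subseteq M$, which gives $\overline R/\overline M \cong R/M$ where $\overline M := M/C$ is the maximal ideal of $\overline R$, and $|R/M|=\infty$ by hypothesis. Thus Proposition~\ref{3.7} applies to $\overline R \subseteq \prod_{j=1}^n \overline R/\overline I_j$ and yields that this extension has FIP if and only if either $n=2$, or $\overline I_j = \overline M$ for $n-2$ of the indices $j$.

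Finally, I would translate the condition $\overline I_j = \overline M$ back to $R$: since both $I_j+J_j$ and $M$ contain $C$, the correspondence theorem gives $\overline I_j = \overline M$ if and only if $I_j + J_j = M$. Combining with the equivalence from Lemma~\ref{3.3} finishes the proof. The only point that requires a moment of care is checking that the reduction really produces a separating family with zero conductor in $\overline R$, but this is precisely what Lemma~\ref{3.3} (together with Proposition~\ref{3.1}(2)) provides, so there is no substantive obstacle.
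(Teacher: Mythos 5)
Your proposal is correct and follows essentially the same route as the paper: reduce to the zero-conductor extension $R/C\subseteq\prod_{j=1}^n R/(I_j+J_j)$ via Lemma~\ref{3.3}, observe that $R/C$ is a $\Sigma$PIR because $|R/M|=\infty$ passes to the quotient, and apply Proposition~\ref{3.7}. The paper's proof is just a terser version of the same argument; your added checks (that the reduced family is separating with zero conductor and that the condition $\overline I_j=\overline M$ translates to $I_j+J_j=M$) are correct and harmless.
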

 
 \begin{proof} Set $R':=R/(\sum_{i=1}^n J_j)=R/C$, where $C:=(R:\mathcal R)$, so that $R\subseteq\mathcal R$ has FIP if and only if $R'\subseteq\prod_{j=1}^n(R/(I_j+J_j))$ has FIP (Lemma~\ref{3.3}). Then, apply Proposition~\ref{3.7} to this extension.
 \end{proof} 

\begin{remark}\label{3.9} Let $(R,M)$ be a local Artinian ring such that $|R/M|=\infty$, and a separating family $I_1,\ldots,I_n$ of ideals of $R$ different from $M$, with $n>2$,  associated extension $R\subseteq\mathcal{R}$ and conductor $C$. We give below such an extension having  FIP while $R/C$ is not an FMIR. 

Let $K$ be an infinite field,  $R:=K[X,Y]/(X,Y)^2$ with maximal ideal $M$. Then $(R,M)$ is a local Artinian ring with $M^2=0$ and  $R/M\cong K$  infinite. Let $x,y$ be the classes of $X,Y$ in $R$, $I_1:=Rx,\ I_2:=Ry,\ I_3:=R(x+y)$ and $\mathcal R:=\prod_{j=1}^3(R/I_j)$. From $I_j\cap I_k=0$ for each $j\neq k\in\{1,2,3\}$, we deduce that $C=0$ by Proposition~\ref{3.1} and also that $\{I_1,I_2,I_3\}$ is a separating family. Let $\overline a$ be the class of $a\in R$ in any $R/I_j$. Observe that $M/I_1=(R/I_1)\overline{y},\ M/I_2=(R/I_2)\overline{x},\ M/I_3=(R/I_3)\overline{x}$, because $y=(x+y)-x$. Hence each $M/I_j$ is a principal ideal with $(M/I_j)^2=0$, so that each $R/I_j$ is a SPIR. Set $e_1:=(\overline{y},\overline{0},\overline{0}),\ \alpha:=e_2:= (\overline{0},\overline{x},\overline{0}),\ e_3:=(\overline{0},\overline{0},\overline{x})$. Using the notation of Proposition~\ref{3.6}, we have $(R:T)=M,\ T=R+\mathcal RM=R+\sum_{i=1}^3Re_j$ and $R_1=R+\mathcal R M^2=R$. Since $(\overline{0},\overline{x},\overline{x})=x\in R$, we get $e_2+e_3=x$, whence $e_3=x-\alpha$. At last, $e_1= (\overline{y},\overline{0},\overline{0})=(\overline{x+y},\overline{0},\overline{0})=(\overline{x+y},\overline{x+y},\overline{x+y})-(\overline{0},\overline{x},\overline{0})=(x+y)-\alpha$. It follows that $T=R [\alpha]$, with $\alpha^2=0$ and $M\alpha=0$, so that $R=R_1\subset T$ is a minimal ramified extension \cite[Theorem 2.3]{DPP2}. Then,   $R\subset T$ and $R=R_1\subset\mathcal R$ have FIP  by Proposition~\ref{3.6}, although $(R,M)$ is a local ring which is not a SPIR: the set of ideals $\{R(x+ay)\mid a\in\mathcal F\}$ is infinite, if $\mathcal F$ is a set of representative elements of $R/M\cong K$.  
\end{remark}

\begin{corollary}\label{3.10} Let $(R,M)$ be a quasi-local ring with $|R/M|=\infty$. Let $I, J$ be ideals of $R$ with $I\cap J=0$ and such that $S:= R/(I+J)$ is a SPIR with nilpotency index $n(S) =p>0$ if $I+J \neq R$. 

\begin{enumerate}
\item Assume that $I+J= R$. Then, $|[R,R/I\times R/J]|=1$.

\item Assume that $I+J\neq R$. Then, $|[R,R/I\times R/J]|=p+1$.
\end{enumerate}

In particular, if $(R,M)$ is a SPIR with $n(R)=q\geq 1$, then $|[R,R^2]|=q+1$ and $[R,R^2]=\{R+M^{i}R^2\}_{i=0,\ldots,q}$.
\end{corollary}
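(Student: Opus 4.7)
The plan is to reduce everything to an application of Proposition~\ref{1.4} via the structure $R/I\times R/J = R+Re_1$ already exploited in Proposition~\ref{3.2}, where $e_1:=(\bar 1,\tilde 0)$. The key preliminary computation is to identify the conductor $(R:R/I\times R/J)$ as an ideal of $R$ (viewing $R$ as embedded diagonally; this embedding is injective because $I\cap J=0$). Since $e_1^2=e_1$ and $e_1 e_2=0$, the conductor equals $\{a\in R\mid ae_1\in R\}$, and $(\bar a,\tilde 0)$ lies in the image of $R$ iff there exists $r\in R$ with $r-a\in I$ and $r\in J$, i.e.\ iff $a\in I+J$. Thus $(R:R/I\times R/J)=I+J$.

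Applying Proposition~\ref{1.4} with $t=e_1$ now gives a bijection between $[R,R/I\times R/J]$ and the set of ideals of $R/(I+J)$, sending each ideal $\mathfrak a\supseteq I+J$ to the $R$-subalgebra $R+\mathfrak a e_1$. For (1), if $I+J=R$ then $R/(I+J)$ has exactly one ideal, yielding $|[R,R/I\times R/J]|=1$ (equivalently, CRT makes the diagonal an isomorphism). For (2), by hypothesis $R/(I+J)$ is a SPIR of nilpotency index $p$, whose ideals are precisely $R/(I+J),\ \bar M,\ \bar M^2,\ldots,\bar M^{p-1},\ 0$, giving the count $p+1$.

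For the ``In particular'' statement, specialize to $I=J=0$, so $R/(I+J)=R$ is a SPIR with $n(R)=q$. The bijection then sends the ideal $M^i$ of $R$ (for $i=0,\ldots,q$) to $R+M^i e_1=R+(M^i\times 0)$. A one-line verification via $(a,b)=(b,b)+(a-b,0)$ shows that $R+(M^i\times 0)=R+(M^i\times M^i)=R+M^iR^2$, yielding $[R,R^2]=\{R+M^iR^2\}_{i=0,\ldots,q}$.

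The only place requiring care is the conductor computation, and the verification that the bijection in Proposition~\ref{1.4} lines up with the explicit submodules $R+M^iR^2$ in the final assertion; everything else is bookkeeping on the $p+1$ ideals of an SPIR. No use is actually made of the hypothesis $|R/M|=\infty$, which is inherited from the ambient setting of the section.
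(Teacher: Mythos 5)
Your proposal is correct and takes essentially the same route as the paper: write $R/I\times R/J=R+Re_1$, identify the conductor as $I+J$ (the paper gets this from Proposition~\ref{3.1} rather than by direct computation), and apply Proposition~\ref{1.4} to put $[R,R/I\times R/J]$ in bijection with the $p+1$ linearly ordered ideals of the SPIR $R/(I+J)$. Your explicit check that $R+M^ie_1=R+M^iR^2$ in the final assertion, and the observation that $|R/M|=\infty$ is not needed, are both consistent with the paper's proof.
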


\begin{proof} (1) If $I+J=R$,  then $|[R,R/I\times R/J]|=1$ by the CRT. 

(2) Assume now that $I+J\neq R$. Since $(S,N)$ is a SPIR with $N:=M/(I+J)$,  $R\subseteq R/I\times R/J$ has FIP by Proposition~\ref{3.2} and its conductor $C= I+J$ by Proposition~\ref{3.1}. Moreover, the proof of Proposition~\ref{3.2} shows that there is a bijection between $[R,R/I\times R/J]$ and the set of ideals of $R/C= S$. Since $(S,N)$ is a SPIR, there is some $t\in S$ such that $N=St$ and the ideals of $S$ are linearly ordered. Then, this set of ideals is $\{St^k\mid k\in\{0,\ldots,p\}\}$ and $|[R,R/I\times R/J]|=p+1$.

Now if $(R,M)$ is a SPIR, with  $n(R) =q\geq 1$, we deduce from (2) that  $|[R,R^2]|=q+1$. Since $(R,M)$ is a SPIR, there exists $x\in R$ such that $M=Rx$ and the ideals of $R$ are the $Rx^ {i}$, for $i=0,\ldots,q$. Moreover, the bijection $\varphi$ between the set of ideals of $R$ and $[R,R^2]$ is given by $\varphi(Rx^{i})=R+x^{i}R^2$. 
 \end{proof}

We  next generalize  some Ferrand-Olivier's result \cite[Lemme 1.5]{FO}.

\begin{theorem}\label{3.11} Let $R$ be a ring, $\{I_1,\ldots,I_n\}$, $n>2$,  a separating family of ideals of $R$. Then, $R\subseteq\mathcal{R}$ is a minimal extension if and only if the following condition $(\dag)$ holds:

$(\dag)$:  There exist $j_0,k_0\in\{1,\ldots,n\},\ j_0\neq k_0$ such that $I_{j_0}+I_{k_0}\in\mathrm{Max}(R)$ and $I_j+I_k=R$ for any $(j,k)\neq (j_0,k_0),\ j\neq k$.

If $(\dag)$ holds, then $\{I_1,\ldots, I_n\}$ satisfies a weak Chinese Remainder Theorem: $I_j+\cap_{k\neq j}I_k=\cap_{k\neq j}(I_j+I_k)$ for each $j\in\{1,\ldots,n\}$.  
\end{theorem}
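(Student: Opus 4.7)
The plan is to reduce each direction to the two-factor case of Proposition~\ref{3.2} via the Chinese Remainder Theorem, together with the classification of minimal (necessarily infra-integral) extensions.

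For ($\Leftarrow$), assume $(\dag)$ and relabel so that $j_0=1$, $k_0=2$. The pairs $(I_j,I_k)$ for $j,k\geq 3$ distinct are coprime by hypothesis, and coprimality of $I_1\cap I_2$ with each $I_k$ ($k\geq 3$) follows from the standard Bezout trick $1=(a_1+b_1)(a_2+b_2)$, $a_i\in I_i$, $b_i\in I_k$, since $a_1a_2\in I_1\cap I_2$. Combined with $\bigcap_j I_j=0$, the classical CRT yields a ring isomorphism $R\cong R/(I_1\cap I_2)\times\prod_{j=3}^n R/I_j$ under which $R\hookrightarrow\mathcal R$ becomes the product of $R/(I_1\cap I_2)\hookrightarrow R/I_1\times R/I_2$ with identities. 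Hence $[R,\mathcal R]$ is in bijection with $[R/(I_1\cap I_2),R/I_1\times R/I_2]$, and Proposition~\ref{3.2}, applied inside $R/(I_1\cap I_2)$, puts this latter set in bijection with the ideals of the field $R/M$, of which there are exactly two. So $R\subset\mathcal R$ is minimal.

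For ($\Rightarrow$), assume $R\subset\mathcal R$ is minimal. By Proposition~\ref{3.1}(1) the extension is infra-integral, so the inert case is excluded. Setting $M:=(R:\mathcal R)$, which is maximal in $R$, I would identify $M$ as an ideal of $\mathcal R$ with $\prod_j (J_j+I_j)/I_j$ (using that the sum $M=\sum J_j$ is actually a direct sum since $J_j\cap J_k\subseteq\bigcap_l I_l=0$ for $j\neq k$), whence $\mathcal R/M\cong\prod_j R/(I_j+J_j)\cong (R/M)^{|\Lambda|}$ for $\Lambda:=\{j\mid I_j+J_j=M\}$, using Proposition~\ref{3.1}(2). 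This quotient is reduced, ruling out the ramified case (in which the maximal ideal $M'/M$ of $\mathcal R/M$ would be nonzero and nilpotent). So the extension is decomposed, forcing $\mathcal R/M\cong (R/M)^2$ and $|\Lambda|=2$; write $\Lambda=\{j_0,k_0\}$.

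The heart of the argument is then to translate the information ``$I_j+J_j=R$ for $j\notin\Lambda$'' and ``$I_{j_0}+J_{j_0}=I_{k_0}+J_{k_0}=M$'' into $(\dag)$. Since $J_j\subseteq I_k$ for $k\neq j$, the first yields $I_j+I_k=R$ whenever $j\notin\Lambda$. For the pair $(j_0,k_0)$, writing $J':=\bigcap_{l\notin\Lambda}I_l$, the coprimalities just obtained together with iterated Bezout give $I_{j_0}+J'=R=I_{k_0}+J'$, so $J_{j_0}=J'\cap I_{k_0}=J'\cdot I_{k_0}\subseteq I_{k_0}$; hence $M=I_{j_0}+J_{j_0}\subseteq I_{j_0}+I_{k_0}$. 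Maximality of $M$ leaves $I_{j_0}+I_{k_0}\in\{M,R\}$, and the value $R$ would, by the same CRT argument, force $I_{j_0}+J_{j_0}=R$, contradicting $I_{j_0}+J_{j_0}=M$. Thus $(\dag)$ holds. The weak CRT under $(\dag)$ is then immediate by case analysis, both sides equaling $R$ when $j\notin\Lambda$ and $M$ when $j\in\Lambda$. The main obstacle throughout is bookkeeping: keeping firm control over the interplay between pairwise coprimality of the $I_j$ and coprimality of $I_j$ with the intersection $J_j$.
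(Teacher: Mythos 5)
Your proof is correct, but it takes a genuinely different route from the paper's in both directions. For sufficiency, the paper collapses $\prod_{j\geq 2}R/I_j$ to $R/J$ with $J:=\cap_{j\geq 2}I_j$, verifies $I_1+J=M$ by an explicit element computation, and invokes Ferrand--Olivier's \cite[Lemme 1.5]{FO}; you instead split off $\prod_{j\geq 3}R/I_j$ as a trivial factor via the classical CRT applied to the pairwise comaximal family $\{I_1\cap I_2, I_3,\ldots,I_n\}$ and count the intermediate rings of the remaining two-factor piece through Proposition~\ref{3.2} (ideals of the field $R/M$). For necessity, the paper analyzes $\mathrm{Max}(\mathcal R)$ and the lying-over structure to show that each maximal ideal of $R$ other than $M$ contains exactly one $I_j$, then reduces once more to the two-ideal case and FO's lemma; you compute the conductor as $\prod_j(I_j+J_j)/I_j$ (essentially the computation of Lemma~\ref{3.3}), obtain $\mathcal R/M\cong(R/M)^{|\Lambda|}$ with $\Lambda=\{j\mid I_j+J_j=M\}$, and use the inert/decomposed/ramified trichotomy of Theorem~\ref{1.2} together with reducedness of this quotient to force the decomposed case and $|\Lambda|=2$. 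Your version makes the ``only if'' direction more conceptual -- the decomposed case is visibly the only option -- at the price of relying on the product decomposition of intermediate rings (\cite[Lemma III.3]{DMPP}, used elsewhere in the paper) and on the conductor identification; the paper's version reuses FO's two-ideal criterion as a black box in both directions. Two small remarks: your detour through $J'$ and iterated Bezout to conclude $J_{j_0}\subseteq I_{k_0}$ is superfluous, since $J_{j_0}=\cap_{l\neq j_0}I_l\subseteq I_{k_0}$ holds by definition (Bezout is genuinely needed only to rule out $I_{j_0}+I_{k_0}=R$); and the final weak-CRT verification, which you rightly call a case analysis, matches the paper's ``easy calculations.''
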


\begin{proof} Assume first that $(\dag)$ holds.  There is no harm to suppose that $j_0=1$, $k_0=2$ and set $J:=\cap_{j=2}^nI_j$. Then $I_j+I_k=R$ for any $j,k\geq 2,\ j\neq k$ gives that $\prod_{j=2}^n(R/I_j)\cong R/J$. So, we are reduced to  the extension $R\subseteq R/I_1\times R/J$. But, $I_1+I_j=R$ for each $j>2$ and $I_1+I_2=M$ give $I_1+J=M$ because $I_1+J\subseteq M$. For the reverse inclusion,  consider in $R/I_1$ the relations $\overline 1=\overline x_j\ (*_j)$ for some $x_j\in I_j$, for any $j>2$. Let $m\in M$. There is $x_2\in I_2$ with $\overline m=\overline x_2$ in $R/I_1$. Using $(*_j)$, we get that $\overline m=\overline x_2\cdots\overline x_n$, so that $ m\in I_1+J$. Then, by \cite[Lemme 1.5]{FO},  $R\subseteq\mathcal{R}$ is a minimal extension since $I_1\cap J=0$.

Conversely, if $R\subseteq\mathcal{R}$ is minimal (integral), then $M:=(R:\mathcal{R})\in\mathrm{Max} (R)$ is an ideal of $\mathcal{R}$. Moreover, there is some $N_1\in\mathrm{Max}(\mathcal{R})$ above $ M$ and possibly only another one $N_2$. There is no harm to suppose that $N_1=M/I_1\times \prod_{k= 2}^nR/I_k$ with $I_1\subseteq M$ and $N_2=R/I_1\times M/I_2\times\prod_{k=3}^n R/I_k$ with $I_2 \subseteq M$. Any other $M'\neq M$ in $\mathrm{Max}(R)$, is lain over by a unique element of $\mathrm {Max}(\mathcal{R})$, of the form $M'S=\prod_{j=1}^n((M'+I _j)/I_j)$ by \cite[Lemma 2.4]{DPP2}. Then, $M'+I_j=R$ for all $j$ but one, so that there is a unique $I_j$ contained in $M'$. Then, for any $j,k>2,\ j\neq k$ and $i=1,2$, we have $I_j+I_k=I_i+I_j=R$, which gives $\prod_{j=2}^n(R/I_j)\cong R/J$ where $ J:=\cap_{j=2}^nI_j$. So, the minimal extension $R\subseteq R/I_1\times R/J$ is involved. By \cite[Lemme 1.5]{FO}, we get that $I_1+J=M''$, for some $M''\in\mathrm{Max}(R)$, whence $I_1,J\subseteq M''$.  Actually, we have $M= M''$. Deny, then $I_j\not\subseteq M''$ for all $j\geq 2$ gives $J\not\subseteq M'' $, a contradiction. A similar proof gives $I_2\subseteq M$ since $J\subseteq M$. From $M=I_1+J\subseteq I_1+I_2\subseteq M$,  we get that  $I_1+I_2=M$ and the proof is complete.

Assume that $(\dag)$ holds,  then easy calculations show that $I_ j+\cap_{k=1,k\neq j}^nI_k$ $=\cap_{k=1,k\neq j}^n(I_j+I_k)$ for each $j\in\{1,\ldots,n\}$, so that $\{I_1,\ldots, I_n\}$ satisfies a weak  Chinese Remainder Theorem. 
 \end{proof}

\section{The case of ring powers}

In this section, we consider   separating families whose ideals are zero. 

\begin{proposition}\label{4.1} Let $(R,M)$ be a $\Sigma$PIR and an integer $n>1$.  Then $R\subseteq R^n$  has FIP if and only if $n=2$.
\end{proposition}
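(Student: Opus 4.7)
The plan is to recognize that $R \subseteq R^n$ fits the framework of Section~3: writing $R^n = \prod_{j=1}^n R/I_j$ with $I_j = 0$ for each $j$, the family $\{I_1,\dots,I_n\}$ is a separating family of proper ideals with zero intersection. Since the diagonal embedding of $R$ into $R^n$ meets each ``coordinate'' ideal $J_1\times\cdots\times J_n$ only in the diagonal, the conductor $(R:R^n)$ is the largest ideal of $R^n$ contained in $R$, which forces $J_1=\cdots=J_n=0$. Hence the conductor is zero, and Proposition~\ref{3.7} applies directly.

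For the \emph{if} direction ($n=2$), I would simply invoke Corollary~\ref{2.5}: since $R$ is a $\Sigma$PIR, Proposition~\ref{2.3} tells us $R$ is an FMIR (it is its own unique local ring), so $R\subseteq R^2$ has FIP.

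For the \emph{only if} direction ($n>2$), I would apply Proposition~\ref{3.7} to the separating family above: FIP holds precisely when either $n=2$ or $I_j = M$ for $n-2$ of the indices. Since $R$ is a SPIR (not a field), $M\neq 0$, so $I_j = 0 \neq M$ for every $j$. The second alternative thus fails, forcing $n=2$.

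The argument is short because all the real work was carried out in Proposition~\ref{3.7}; the only step that requires any checking here is that the conductor of the diagonal embedding $R \hookrightarrow R^n$ is indeed zero (so that Proposition~\ref{3.7} is applicable without first passing through Lemma~\ref{3.3}), and that fact is immediate from the structure of ideals in the product ring.
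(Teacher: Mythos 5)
Your proposal is correct and follows essentially the same route as the paper, whose entire proof is to apply Proposition~\ref{3.7} with $I_j=0$ for each $j$ and observe that $(R:R^n)=0$ while $M\neq 0$, so the alternative ``$I_j=M$ for $n-2$ indices'' is impossible. Your extra checks (that the conductor of the diagonal embedding vanishes, and the $n=2$ case via Corollary~\ref{2.5}) are exactly the ingredients the paper relies on implicitly or inside the proof of Proposition~\ref{3.7}.
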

\begin{proof}  Use Proposition~\ref{3.7} with $I_j=0$ for each $j$. Since $(R:R^n)=0$ and $M\neq 0$, we get the result. 
\end{proof}

We are now in position to get a result in the general case.

\begin{theorem}\label{4.2} Let $R$ be a ring and $n>1$ an  integer. Then $R\subseteq R ^n$ has FIP if and only if $R$ is an FMIR with $n=2$ when $R$ is a $\Sigma$FMIR.

 \end{theorem}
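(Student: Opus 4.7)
The strategy is to localize and reduce the question to a local FMIR, where the three possible local types (finite local rings, infinite fields, and infinite SPIRs) have already been handled in Propositions~\ref{2.1} and~\ref{4.1} and Corollary~\ref{2.5}. The whole statement is then obtained by gluing these local characterizations through Proposition~\ref{2.7}.

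For the ``only if'' direction, assume $R\subseteq R^n$ has FIP. Proposition~\ref{2.2}(3) gives at once that $R$ is an FMIR. Assume in addition that $R$ is a $\Sigma$FMIR, so that by Proposition~\ref{2.3} there exists $M\in\mathrm{Max}(R)$ with $R_M$ an infinite SPIR, i.e., a $\Sigma$PIR. Since FIP passes to localizations (Proposition~\ref{2.7}), the extension $R_M\subseteq (R_M)^n$ has FIP, and Proposition~\ref{4.1} then forces $n=2$.

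For the ``if'' direction, assume $R$ is an FMIR and that $n=2$ whenever $R$ is $\Sigma$FMIR. The case $n=2$ is exactly Corollary~\ref{2.5}. Suppose now $n\geq 3$; then by hypothesis no local ring of $R$ is an infinite SPIR, and Proposition~\ref{2.3} decomposes $R=\prod_{i=1}^m R_i$ where each $R_i$ is either a finite local ring or a (possibly infinite) field. Because $R$ is Artinian, $|\mathrm{MSupp}(R^n/R)|<\infty$, and Proposition~\ref{2.7}(2) reduces FIP for $R\subseteq R^n$ to FIP for each localization $R_{M_i}\subseteq (R^n)_{M_i}$ at the maximal ideals $M_i$ of $R$ corresponding to the factors $R_i$. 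The key compatibility is $R_{M_i}\cong R_i$ and $(R^n)_{M_i}\cong R_i^n$, which follows at once from localization commuting with finite direct products. When $R_i$ is finite, $R_i^n$ is finite and FIP is automatic; when $R_i$ is an infinite field, Proposition~\ref{2.1} supplies FIP.

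I do not anticipate a real obstacle: every ingredient is already in place. The mildly subtle step is the identification $(R^n)_{M_i}\cong R_i^n$, but this is immediate once one remembers that $R_{M_i}\cong R_i$ and that localization commutes with finite products; the rest of the argument is a careful bookkeeping of the local cases.
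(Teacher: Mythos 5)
Your proof is correct and follows essentially the same route as the paper: deduce that $R$ is an FMIR (the paper cites Proposition~\ref{3.4} with $I_j=0$ where you cite Proposition~\ref{2.2}(3), but these give the same conclusion), then localize via Proposition~\ref{2.7} and settle the three local types using Proposition~\ref{2.1}, finiteness, and Proposition~\ref{4.1}. Your separate treatment of $n=2$ via Corollary~\ref{2.5} is a cosmetic reorganization of the paper's uniform local case analysis, not a different argument.
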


\begin{proof} For any maximal ideal $M$ of $R$, we have $(R^n)_M=(R_M)^n\neq R_M$, so that $\mathrm {MSupp}(R^n/R)=\mathrm {Max}(R)$. 

Assume that $R\subseteq R^n$ has FIP. Using Proposition~\ref{3.4} with $I_j=0$ for each $j$ and since $(R:R^n)=0$, we get that $R$ is an FMIR. Moreover, $R_M\subseteq(R_M)^n$ has FIP for each $M\in\mathrm{Max}(R)$ in view of Proposition~\ref{2.7}. Assume that there is some $M\in\mathrm{Max}(R)$ such that $R_M$ is a $\Sigma$PIR.  Since $MR_M\neq 0$, we get that $n\leq 2$ by Proposition~\ref{4.1}, so that $n=2$. 

Conversely, if $R$ is an FMIR, then $|\mathrm {Max}(R)|<\infty$ and $R\subseteq R^n$ has FIP if and only if $R_M\subseteq(R_M)^n$ has FIP for each $M\in\mathrm{Max}(R)$. Let $M\in\mathrm{Max}(R)$. If $R_M$ is a field, then $R_M\subseteq(R_M)^n$ has FIP by Proposition~\ref{2.1}. If $R_M$ is a finite ring, then so is $(R_M)^n$ and $R_M\subseteq(R_M)^n$ has FIP. Assume that $R_M$ is a $\Sigma$PIR, so that $R$ is a $\Sigma$FMIR and $n=2$. Then, Proposition~\ref{4.1} gives that $R_M\subseteq (R_M)^n$ has FIP. Therefore, $R\subseteq R^n$ has FIP.
 \end{proof}

We get now a generalization of Theorem~\ref{2.17}.

\begin{theorem}\label{4.3} Let $R\subseteq S_j,\ j=1,\ldots,n$ be finitely many FIP extensions, $\Sigma_ j:=\substack{+\\ S_j}R$ and $S:=\prod_{j=1}^nS_j$. Then $R\subseteq S$ has FIP if and only if $R$ is an FMIR satisfying the following conditions $(B_1)$ and $(B_2)$:

\noindent $(B_1)\ \mathrm{Supp}(\Sigma_j/R)\cap\mathrm{Supp}(\Sigma_j/R)\cap\Sigma\mathrm{Max}(R)=\emptyset$ for any $j,l\in\{1,\ldots,n\}$ such that $ j\neq l$.

\noindent $(B_2)$ If there exists $M\in\Sigma\mathrm{Max}(R)$ such that $R_M$ is a $\Sigma$PIR, then $n=2$ and, for each such $M$ and each $j\in\{1,2\}$, either $R_M\subset(\Sigma_j)_M$ is a special minimal ramified extension or $R_M=(\Sigma_j)_M$. 
\end{theorem}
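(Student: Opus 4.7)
The plan is to mirror the proof of Theorem~\ref{2.17}, which handles the case $n=2$, and to reduce the general case to quasi-local localizations via Proposition~\ref{2.7}.

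For necessity, I would use Propositions~\ref{2.2} and~\ref{2.3} directly: FIP of $R \subseteq S$ implies that each $R \subseteq S_j$ has FIP and that $R$ is an FMIR, hence $|\mathrm{Max}(R)| < \infty$. To derive $(B_1)$, I would argue by contradiction: if some $M \in \mathrm{Supp}(\Sigma_j/R) \cap \mathrm{Supp}(\Sigma_l/R) \cap \Sigma\mathrm{Max}(R)$ with $j \neq l$, then the surjective $R_M$-algebra map $\pi \colon S_M \twoheadrightarrow (S_j)_M \times (S_l)_M$ transports FIP to its image, and the sub-extension $R_M \subseteq (\Sigma_j)_M \times (\Sigma_l)_M$ has FIP with both $R_M \subsetneq (\Sigma_j)_M$ and $R_M \subsetneq (\Sigma_l)_M$ proper subintegral FIP extensions over the quasi-local $R_M$ with $|R_M/MR_M|=\infty$, contradicting Proposition~\ref{2.12}. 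For $(B_2)$, if $M \in \Sigma\mathrm{Max}(R)$ and $R_M$ is a $\Sigma$PIR, the factorization $R_M \hookrightarrow R_M^n \hookrightarrow S_M$ by the diagonal transfers FIP to $R_M \subseteq R_M^n$, and Theorem~\ref{4.2} forces $n=2$. The structural constraint on each $(\Sigma_j)_M$ is then extracted exactly as in the last paragraph of the proof of Theorem~\ref{2.17}, combining Propositions~\ref{2.12}, \ref{2.15} and Corollary~\ref{2.16}.

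For sufficiency, assume $R$ is an FMIR satisfying $(B_1)$ and $(B_2)$. Proposition~\ref{2.7} reduces matters to checking that $R_M \subseteq S_M$ has FIP for each $M$ in the finite set $\mathrm{MSupp}(S/R)$, and Proposition~\ref{2.3} tells us each $R_M$ is either a finite ring, an infinite field, or a $\Sigma$PIR. The finite case is automatic since $S_M$ is then finite. The $\Sigma$PIR case reduces to Theorem~\ref{2.17} thanks to $(B_2)$ giving $n=2$. In the infinite-field case, $(B_1)$ ensures at most one index $j_0$ satisfies $M \in \mathrm{Supp}(\Sigma_{j_0}/R)$, and $(\Sigma_j)_M = R_M$ for every other $j$. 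Proposition~\ref{2.10}(3) reduces FIP for $R_M \subseteq S_M$ to FIP for the seminormalization, and Propositions~\ref{2.6}(1) and~\ref{2.8} identify the latter with $R_M + N_{j_0} e_{j_0}$, where $N_{j_0}$ is the maximal ideal of $(\Sigma_{j_0})_M$ and $e_{j_0}$ is the $j_0$-th canonical idempotent. Writing $(\Sigma_{j_0})_M = R_M[\alpha]$ with $\alpha^3=0$ via \cite[Theorem 3.8]{ADM} applied to the subintegral FIP extension $R_M \subseteq (\Sigma_{j_0})_M$ over the infinite field $R_M$ gives $R_M + N_{j_0} e_{j_0} = R_M[\alpha e_{j_0}]$ with $(\alpha e_{j_0})^3=0$; a final invocation of \cite[Theorem 3.8]{ADM} then yields FIP.

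The main obstacle I anticipate is the infinite-field case: because seminormalization does not distribute over products of subintegral extensions (Remark~\ref{2.9}), collapsing the seminormalization of $R_M$ in $S_M$ to a single-factor form depends critically on the simplification $(\Sigma_j)_M = R_M$ for $j \neq j_0$ supplied by $(B_1)$ and on the explicit additive description of Proposition~\ref{2.8}. A secondary care is needed to confirm that the FCP hypothesis of Proposition~\ref{2.10}(3) is met, which follows from each $R \subseteq S_j$ having FIP by Proposition~\ref{2.2}(2).
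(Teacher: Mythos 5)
Your proposal tracks the paper's proof of Theorem~\ref{4.3} essentially step for step: necessity via Proposition~\ref{2.2}, the diagonal $R\hookrightarrow R^n\hookrightarrow S$ combined with Theorem~\ref{4.2} to force $n=2$ in the $\Sigma$PIR case, Proposition~\ref{2.12} for $(B_1)$ and Theorem~\ref{2.17} for the structural part of $(B_2)$; and sufficiency by localizing (Proposition~\ref{2.7}), splitting on whether $R_M$ is finite, an infinite field, or a $\Sigma$PIR, and in the infinite-field case computing the seminormalization as $R_M[\alpha e_{j_0}]$ and invoking \cite[Theorem 3.8]{ADM} exactly as in the proof of Theorem~\ref{2.17}.

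The one loose step is your direct appeal to Proposition~\ref{2.10}(3) for $R_M\subseteq S_M$: that proposition is stated for a product of \emph{integral} extensions, whereas the $R\subseteq S_j$ are only assumed to have FIP. The paper inserts an intermediate reduction that you omit: by Proposition~\ref{2.4} the integrally closed extension $\overline{S}_M=\prod_j(\overline{S_j})_M\subseteq S_M$ has FIP, so by \cite[Theorem 3.13]{DPP2} one may replace $R_M\subseteq S_M$ by $R_M\subseteq\overline{S}_M$, which \emph{is} a product of integral extensions (with the same seminormalizations $\Sigma_j$), and only then apply Proposition~\ref{2.10}. Alternatively, one can note that since $R$ is an FMIR, hence Artinian, every FCP extension of $R$ is automatically integral --- a minimal flat epimorphism is impossible over a zero-dimensional base by Theorem~\ref{1.1}(c), as its conductor would be a maximal ideal surviving in the top ring --- but this observation needs to be made explicitly. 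With either patch your argument coincides with the paper's.
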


\begin{proof}  The result can be written under the form (A) $\Leftrightarrow$ $R$ is an FMIR satisfying  conditions $(B_1)$ and $(B_2)$  where (A) is the statement: $R\subseteq S$ has FIP. 

Assume that (A) holds. Then, $R\subseteq R^n$ has FIP. In view of Theorem~\ref{4.2}, $R$ is an FMIR and $n=2$  as soon as $R$ is a $\Sigma$FMIR, in which case  we can use Theorem~\ref{2.17}.

If there exist $j,l\in\{1,\ldots,n\},\ j\neq l$ and $M\in\mathrm{Supp}(\Sigma_j/R)\cap\mathrm{Supp}(\Sigma _l/R)\cap\Sigma\mathrm{Max}(R)$, then $R_M\neq(\Sigma_j)_M,(\Sigma_l)_M$, with $R_M$ infinite. Moreover, $R_M\subset(\Sigma_ j)_M$ and $R_M\subset(\Sigma_l)_M$ are subintegral extensions. In view of Proposition~\ref{2.12}, we get that $R_M\subset(\Sigma_j)_M\times(\Sigma_l)_M$ has not FIP, and so $R_M\subset S_M$ has not FIP, a contradiction. Then, $(B_1)$ holds.

If there exists $M\in\Sigma\mathrm{Max}(R)$ such that $R_M$ is a $\Sigma$PIR, then $R$ is a $\Sigma$FMIR and $n=2$ by Theorem~\ref{4.2}. Moreover, since $R_M$ is not a field, Theorem~\ref{2.17} gives that for each $j\in\{1,2\}$, either $R_M\subset(\Sigma_j)_M$ is a special minimal ramified extension or $R_M=(\Sigma_j)_M$. Then $(B_2)$ holds.

Conversely, assume that $R$ is an FMIR and that $(B_1)$ and $(B_2)$ hold. Clearly, $\mathrm{MSupp}(S/R)$ is finite. Then, $R\subseteq S$ has FIP if and only if $R_M\subseteq S_M$ has FIP for each $M\in\mathrm{MSupp}(S/R)$ by Proposition~\ref{2.7}.

The integral closure of $R$ in $S$ is $\overline S=\prod_{j=1}^n\overline S_j$ by Proposition~\ref{2.4}  and $\overline S\subseteq S$ has FIP. Hence, $\overline S_M\subseteq S_M $ has FIP for each $M\in\mathrm {MSupp}(S/R)$. Then, $R\subseteq S$ has FIP if and only if the module finite extension $R_M\subseteq{\overline S_M}$ has FIP for each $M\in\mathrm{MSupp}(S/R)$  \cite[Theorem 3.13]{DPP2}.

If $R_M$ is finite, so is ${\overline S_M}$ and $R_M\subseteq{\overline S_M}$ has FIP. Now if $R_M$ is an infinite field, $R_M\subseteq\substack{+\\ \overline{S}_M}R_M$ as well as $R_M\subseteq{\overline S _M}$ have FIP. To see this, mimic the proof of Theorem~\ref{2.17}, using the fact that there is at most one $j\in\{1,\ldots,n\}$ such that $R_M\neq(\Sigma_j)_M$, so that $R_M=(\Sigma_l)_M$ for each $l\in\{1, \ldots,n\},\ l\neq j$. As in the proof of Theorem~\ref{2.17}, we get that $R_M\subset\substack{+\\ S_M}R_ M$ has FIP, because $\substack{+\\ S_M}R_M=R_M[\alpha]$, where $\alpha$ is the $n$-uple whose all components are $0$, except the $j$th which is $\alpha_j$ defining $(\Sigma_j)_M=R_M[\alpha_j]$. Lastly, if $R_M$ is a $\Sigma$PIR, then $n=2$ and Theorem~\ref{2.17} gives that $R_M\subseteq {\overline S_M}$ has FIP.

To conclude, $R\subseteq {\overline S}$ has FIP. 
\end{proof}

We can rephrase Theorem~\ref{4.2} in the following way.

\begin{corollary}\label{4.4} Let $R$ be a ring and $n>1$ an integer. Then, $R\subseteq R^n$ has FIP if and only if $R$ is Artinian and setting $\{M_1,\ldots,M_m\}:=\mathrm{Max}(R)$ and $\alpha_i := n(R_{M_i})$, then  for each $i$, one of the following conditions holds:

\begin{enumerate}
\item $\alpha_i=1$.

\item $|R/M_i|<\infty$. 

\item $R_{M_ i}$ is a SPIR and $n\leq 2$ as soon as there exists some $i$ such that $\alpha_i>1$ and $R_{M_ i}$ is a $\Sigma$PIR. 

\end{enumerate}

\end{corollary}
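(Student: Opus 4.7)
The plan is to derive Corollary~\ref{4.4} as a straightforward translation of Theorem~\ref{4.2} via the structural description of FMIRs furnished by Proposition~\ref{2.3}. Recall that that proposition identifies FMIRs with finite direct products of finite local rings, SPIRs, and fields, these being the local rings of $R$; in particular, an FMIR is automatically Artinian and quasi-semilocal. Both of the numbered local conditions and the Artinian hypothesis of the corollary will fall out of unpacking ``FMIR'', while the clause on $n\leq 2$ corresponds directly to the $\Sigma$FMIR clause of Theorem~\ref{4.2}.

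For the forward direction, I would assume $R\subseteq R^n$ has FIP and apply Theorem~\ref{4.2} to conclude that $R$ is an FMIR, hence by Proposition~\ref{2.3} a finite direct product of fields, finite local rings, and SPIRs, and that $n=2$ whenever $R$ is a $\Sigma$FMIR. Reading off this decomposition shows that $R$ is Artinian with $\mathrm{Max}(R)=\{M_1,\ldots,M_m\}$ finite and with each $R_{M_i}$ being a field (so $\alpha_i=1$, case (1)), a finite local ring (so $|R/M_i|<\infty$, case (2)), or a SPIR (case (3)). The key small observation is that a SPIR is by definition not a field, so whenever $R_{M_i}$ is a SPIR one has $\alpha_i>1$; consequently the existence of some $i$ with $\alpha_i>1$ and $R_{M_i}$ a $\Sigma$PIR is equivalent to $R$ being a $\Sigma$FMIR, and Theorem~\ref{4.2} then delivers $n\leq 2$.

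For the converse, I would start from the hypothesis that $R$ is Artinian and invoke the classical structure theorem for Artinian rings to write $R\cong\prod_{i=1}^m R_{M_i}$. Conditions (1), (2), (3) say exactly that each Artinian local factor $R_{M_i}$ is a field, a finite local ring, or a SPIR. Here the only minor point to verify is that, given $R_{M_i}$ Artinian local with $|R/M_i|<\infty$, the ring $R_{M_i}$ itself is finite, which follows from the finiteness of $\mathrm{L}_{R_{M_i}}(R_{M_i})$ and the fact that each composition factor is $R/M_i$. Hence Proposition~\ref{2.3} yields that $R$ is an FMIR. If moreover some $R_{M_i}$ is a $\Sigma$PIR, then $R$ is a $\Sigma$FMIR and the hypothesis on $n$ forces $n=2$; thus Theorem~\ref{4.2} applies and gives that $R\subseteq R^n$ has FIP.

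No serious obstacle is expected: once Theorem~\ref{4.2} and Proposition~\ref{2.3} are available, the proof is essentially a dictionary exercise, the only careful point being to match ``$R_{M_i}$ a $\Sigma$PIR'' with ``$\alpha_i>1$ and $R_{M_i}$ infinite SPIR'' so that the $\Sigma$FMIR condition of Theorem~\ref{4.2} agrees with the condition phrased in terms of the nilpotency indices $\alpha_i$.
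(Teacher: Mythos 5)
Your proposal is correct and follows essentially the same route as the paper: both directions reduce to Theorem~\ref{4.2} via the decomposition of an FMIR (equivalently, of an Artinian ring satisfying (1)--(3)) into a finite product of fields, finite local rings, and SPIRs from Proposition~\ref{2.3}, with the same dictionary matching ``$\alpha_i=1$'' to fields, ``$|R/M_i|<\infty$'' to finite local factors, and ``$\alpha_i>1$ with $R/M_i$ infinite'' to the $\Sigma$PIR/$\Sigma$FMIR clause. The two auxiliary observations you flag (a SPIR has nonzero nilpotent maximal ideal, and an Artinian local ring with finite residue field is finite) are exactly the ones the paper uses.
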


\begin{proof} By Theorem~\ref{4.2}, $R\subseteq R^n$ has FIP if and only if $R$ is a finite direct product $\prod_{i=1}^mR_{M_i}$ of finite local rings, SPIRs, and fields, with $n=2$ as soon as there is some $R_{M_i}$ which is a $\Sigma$PIR. Note that $0=\prod_{i=1}^mM_i^{\alpha_i}$ and set $R_i := R_{M_i}$ so that $0=M_i^{\alpha_i}R_i$.

Assume that $R\subseteq R^n$ has FIP and  fix some $i$. Then $R_i$ is a field if and only if $\alpha_i=1$, giving (1). We know that $R_{M_i}$ is a finite ring if and only if $|R/M_i|<\infty$, which gives (2). Assume that $\alpha_i>1$ and $|R/M_i|=\infty$. Then, $R_i$ is a $\Sigma$PIR, so that $n=2$ and we have (3). 

Conversely, assume that $R$ is an Artinian ring and  that for each $i$ one of conditions (1), (2) or (3) holds. It follows that $R$ is a finite direct product $\prod_{i= 1}^mR_i$ of primary rings.  We have just seen that $R_i$ is a field when $\alpha_i=1$. If $|R/M_i|=|R_i/M_iR_i|<\infty$, then $R_i$ is a finite ring. At last, if $\alpha_i>1$ and $|R/M_i|=\infty$, then $R_ {M_i}$ is a $\Sigma$PIR and $n\leq 2$. Now, use Theorem~\ref{4.2} to get that $R\subseteq R^n$ has FIP.
\end{proof}

Extensions of the form $R^p\subseteq R^n$, for some integers $1<p<n$ generalize extensions $R\subseteq R^n$. For $R^p$ and $R^n$ endowed with their canonical structures of $R$-algebras, we show that $\mathrm{Homal}_{R}(R^p,R^n)$ has at least $S(n,p)$ elements (the {\it Stirling number of the second kind} $S(n,p):=|P(n,p)|$ where $P(n,p)$ is the set of  partitions of $\{1,\ldots,n\}$ into $p$  subsets). We set $\mathrm{Exal}_R(R^p,R^n):=\{\varphi\in\mathrm{Homal}_{R}(R^p,R^n)\mid\varphi \ \mathrm{injective}\}$.

\begin{proposition}\label{4.6} Let $R$ be a ring and $1< p <n$ two  integers,  then:

\begin{enumerate}
\item $|\mathrm{Exal}_R(R^p,R^n)| \geq S(n,p)$.

\item If $R$ is connected, $|\mathrm{Exal}_R(R^p,R^n)| = S(n,p)$. 

\item If  $R\subseteq\mathrm{Tot}(R)$ is  t-closed and $\mathrm{Tot}(R)$ Artinian (for instance, if $R$ is Artinian), then $|\mathrm{Exal}_{R}(R^p,R^n)|\leq S(n,p)^{|\mathrm{Min}(R)|}$.
\end{enumerate}  
\end{proposition}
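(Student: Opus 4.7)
The overall plan is to reduce everything to a count of systems of orthogonal idempotents of $R^n$, using that an $R$-algebra map $\varphi:R^p\to R^n$ is completely determined by the images $\varphi(e_1),\ldots,\varphi(e_p)$ of the canonical orthogonal idempotents $e_1,\ldots,e_p$ of $R^p$, which must be orthogonal idempotents of $R^n$ summing to $1$; conversely any such tuple defines an $R$-algebra map. Throughout I denote by $\epsilon_1,\ldots,\epsilon_n$ the canonical idempotents of $R^n$.

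For $(1)$, given a partition $\{A_1,\ldots,A_p\}\in P(n,p)$, I set $\varphi(e_i):=\sum_{j\in A_i}\epsilon_j$. This is an orthogonal idempotent decomposition of $1$, so it determines an $R$-algebra map $\varphi:R^p\to R^n$, and an element $\sum_i r_ie_i$ is sent to the vector with $j$-th coordinate $r_{\sigma(j)}$, where $\sigma(j)$ is the unique block containing $j$. Nonemptiness of the blocks forces $\varphi$ to be injective. Distinct partitions give distinct maps, producing $S(n,p)$ injective $R$-algebra maps.

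For $(2)$, when $R$ is connected the only idempotents of $R$ are $0$ and $1$, so the idempotents of $R^n$ are exactly the $\sum_{j\in A}\epsilon_j$ for $A\subseteq\{1,\ldots,n\}$. Hence any $\varphi\in\mathrm{Homal}_R(R^p,R^n)$ is of the form $\varphi(e_i)=\sum_{j\in A_i}\epsilon_j$ for subsets $A_i$, and the orthogonality and completeness of the $e_i$ translate exactly into $\{A_i\}$ being a partition of $\{1,\ldots,n\}$. Injectivity is equivalent to each $A_i$ being nonempty (else $e_i\in\ker\varphi$), which makes the correspondence with $P(n,p)$ a bijection, so $|\mathrm{Exal}_R(R^p,R^n)|=S(n,p)$.

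For $(3)$, I use the t-closedness of $R\subseteq T:=\mathrm{Tot}(R)$: any idempotent $e\in T$ satisfies $e^2-1\cdot e=0\in R$ and $e^3-1\cdot e^2=0\in R$, so the defining implication of t-closedness yields $e\in R$. Hence $R$ and $T$, and thus $R^n$ and $T^n$, share the same idempotents. Each $\varphi\in\mathrm{Exal}_R(R^p,R^n)$ therefore extends to $\psi:=T\otimes_R\varphi:T^p\to T^n$, whose defining images $\varphi(e_i)$ already lie in $R^n$; flatness of the localization $T$ over $R$ preserves injectivity, and $\varphi=\psi|_{R^p}$ makes $\varphi\mapsto\psi$ an injection $\mathrm{Exal}_R(R^p,R^n)\hookrightarrow\mathrm{Exal}_T(T^p,T^n)$. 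Since $T$ is Artinian, $T=\prod_{i=1}^m T_i$ with each $T_i$ a connected local Artinian ring, and the bijection $\mathrm{Min}(T)\leftrightarrow\mathrm{Min}(R)$ via contraction (every minimal prime of $R$ consists of zero divisors) gives $m=|\mathrm{Min}(R)|$. Decomposing $T$-algebra maps via the central idempotents of $T$ yields $\mathrm{Exal}_T(T^p,T^n)=\prod_{i=1}^m\mathrm{Exal}_{T_i}(T_i^p,T_i^n)$, and $(2)$ gives $S(n,p)$ per factor, whence $|\mathrm{Exal}_R(R^p,R^n)|\leq S(n,p)^{|\mathrm{Min}(R)|}$. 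The main obstacle I foresee is the articulation of $(3)$: one must check cleanly that $\varphi\mapsto\psi$ is a well-defined injection into $\mathrm{Exal}_T(T^p,T^n)$ and that the product decomposition of $T$-algebra maps is bijective on the injective ones, which rests on flatness of $R\to T$ and the coincidence of idempotents on either side.
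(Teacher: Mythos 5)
Your proof is correct and follows essentially the same route as the paper: both arguments reduce the count to complete orthogonal systems of idempotents in $R^n$ (the paper phrases this via matrices $(a_{i,j})$ satisfying $a_{i,j}^2=a_{i,j}$, $a_{i,j}a_{i,k}=0$, $\sum_j a_{i,j}=1$, you via the images $\varphi(e_i)$, which is the same data), identify the $\{0,1\}$-valued ones with $P(n,p)$ for part (1), use connectedness to show these are all of them for part (2), and use t-closedness to transfer idempotents to the Artinian ring $\mathrm{Tot}(R)=\prod_{l}R_{M_l}$ and apply (2) factorwise for part (3). Your part (3) is slightly more explicit than the paper's (which just says ``it is enough to use (2)''), spelling out the injection $\mathrm{Exal}_R(R^p,R^n)\hookrightarrow\mathrm{Exal}_T(T^p,T^n)$ via flat base change, but the underlying idea is identical.
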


\begin{proof} 

Let $\mathcal{C}:=\{f_1,\ldots,f_p\}$  and $\mathcal{B}:=\{e_1,\ldots,e_n\}$ be the canonical bases of the $R$-algebras $R^p$ and $R^n$, that are complete families of orthogonal idempotents.

For  $\varphi \in \mathrm{Homal}_R(R^p,R^n)$, let $\lambda(\varphi) =(a_{i,j}) \in M_{n,p}(R)$ be its matrix  in the bases $\mathcal{C}$  and $\mathcal{B}$ (with the rule $\varphi(f_j)=\sum_{i=1}^na_{i,j}\cdot e_i$ for each $j$). Then $\lambda$ defines an injective map whose image $\Lambda$ we compute.   Applying  the ring morphism $\varphi$ to the relations $f_j^2=f_j$,  $f_jf_k=0$ for each $ j\neq k$ and $\sum_{j=1}^pf_j=1_{R^p}$, we   get  the conditions  $(*_1)$: $a_{i,j}^2=a_{i,j}$, $(*_2)$: $a_{i,j}a_{i,k}=0$ for each $ j\neq k$ and  $(*_3)$: $\sum_{j=1}^pa_{i,j}=1$, for each $i$.
It is easily seen that $\Lambda=\{(a_{i,j})\in M_{n,p}(R)\mid(*_1),(*_2),(*_3) \}$ and that $\lambda: \mathrm{Homal}_R(R^p,R^n) \to \Lambda$ is bijective. Indeed, any element of $\Lambda$ is the matrix of a ring morphism by $(*_1), (*_2), (*_3)$. 
 
(1) Let $H:=\{\varphi\in\mathrm{Exal}_R(R^p,R^n)\mid\lambda(\varphi)\in M_{n,p}(\{0,1\})\}$. For $\varphi\in H$ and $\lambda(\varphi)=(a_{i,j})$, we have $\ a_{i,j}\in\{1,0\}$ for each $(i,j)$ and then $a_{i,k}=0$ as soon as $a_{i,j}=1$ for some $j\neq k$ by $(*_2)$. For each $j\in\{1,\ldots,p\}$, set $A_j:=\{i\in\{1,\ldots,n\} \mid a_{i,j}=1\}$. Since $\varphi$ is injective, $\varphi(f_j)\neq 0$ for all $j$ implies that each $A_j\neq \emptyset$.
Then $(*_2)$ implies $A_j\cap A_k=\emptyset$ for $ j\neq k$ and $(*_3)$ that $\{1,\ldots,n\}=\cup_{j= 1}^pA_j$, since each $i\in\{1,\ldots,n\}$ is in one (and only one) $A_j$, so that $\{A_1,\ldots,A_p\}\in P (n,p)$. Hence, there is a map $\mu:H\to P(n,p)$, where $\mu(\varphi) = \{A_1,\ldots,A_p\}$, such that  $\varphi(f_j)=\sum_{i\in A_j}e_i$  for each $j$.
Then  $\mu$ is bijective because any element $\{A_1,\ldots,A_p\}$ of $P(n,p)$ defines some $\varphi \in H$ by the relations $\varphi(f_j)=\sum_{i\in A_j}e_i$  for each $j$.

(2) If $R$ is connected, $(*_1)$ implies  that $H = \mathrm{Exal}_R(R^p,R^n)$. 

(3)  If  $T:= \mathrm{Tot}(R)$ is  Artinian, then $T\cong\prod_{l=1}^mR_{M_l}$, where $\mathrm{Min}(R):=\{M_1,\ldots,M_m\}$. Since $R\subseteq T$ is  t-closed, the idempotents of $R$ and $T$ coincide. Then it is enough to use (2).
\end{proof}

We show that anything is possible when $R$ is   a $\Sigma$PIR.

\begin{proposition}\label{4.7}  Let $(R,M)$ be a $\Sigma$PIR, $p,n$ be two integers such that $1<p<n$ and $\varphi \in \mathrm{Exal}_R(R^p,R^n)$. The following statements hold:
\begin{enumerate}
\item If $n=p+1$,  $\varphi $ has FIP.

\item If $p+2\leq n\leq 2p$,   $\varphi$ has FIP in some cases and not FIP in some others. 

\item If $n\geq 2p+1$, then  $\varphi $ has not FIP. 
\end{enumerate}
\end{proposition}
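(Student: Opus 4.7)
The plan is to first invoke Proposition~\ref{4.6}(2): since a SPIR is connected, every $\varphi\in\mathrm{Exal}_R(R^p,R^n)$ corresponds bijectively to a partition $\{A_1,\ldots,A_p\}$ of $\{1,\ldots,n\}$ into $p$ nonempty subsets via $\varphi(f_j)=\sum_{i\in A_j}e_i$. Setting $n_j:=|A_j|$ and reindexing the factors of $R^n$ by these blocks, I would identify $R^n\cong\prod_{j=1}^p R^{n_j}$ so that the image $\varphi(R^p)$ becomes $\prod_{j=1}^p R$, where each factor $R$ sits diagonally in $R^{n_j}$.

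The second step reduces $[\varphi(R^p),R^n]$ to a product. The idempotents $\varphi(f_1),\ldots,\varphi(f_p)$ lie in $\varphi(R^p)$ and form a complete orthogonal system decomposing $R^n$ into the $R^{n_j}$; hence any $\varphi(R^p)$-subalgebra $C$ of $R^n$ splits as $C=\prod_{j=1}^p\varphi(f_j)C$, with each $\varphi(f_j)C$ an element of $[R,R^{n_j}]$ (via the identification $\varphi(f_j)R^n\cong R^{n_j}$). This yields a bijection $[\varphi(R^p),R^n]\cong\prod_{j=1}^p[R,R^{n_j}]$, so $\varphi$ has FIP if and only if every diagonal extension $R\hookrightarrow R^{n_j}$ has FIP. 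By Proposition~\ref{4.1} applied to the $\Sigma$PIR $R$, this holds if and only if $n_j\leq 2$ for every $j$.

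The third step is a combinatorial analysis on the sizes $(n_1,\ldots,n_p)$ with $n_j\geq 1$ and $\sum_j n_j=n$. For $n=p+1$, every such partition has exactly one block of size $2$ and $p-1$ singletons, so all $n_j\leq 2$ and $\varphi$ has FIP. For $p+2\leq n\leq 2p$, both behaviours occur: the partition with $n-p$ blocks of size $2$ and $2p-n$ singletons (admissible since $n-p\leq p$) satisfies all $n_j\leq 2$, while the partition with $A_1=\{1,2,3\}$ and the remaining $n-3\geq p-1$ elements distributed into $p-1$ nonempty blocks has $n_1=3$, giving a non-FIP $\varphi$. For $n\geq 2p+1$, pigeonhole forces some $n_j\geq\lceil n/p\rceil\geq 3$ in every partition, so no $\varphi$ has FIP.

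The principal obstacle is the decomposition $[\varphi(R^p),R^n]\cong\prod_j[R,R^{n_j}]$: once the block-idempotent splitting argument is set up cleanly, the combinatorics and the invocation of Proposition~\ref{4.1} are routine.
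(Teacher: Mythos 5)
Your proposal is correct and follows essentially the same route as the paper: identify $\varphi$ with a partition via Proposition~\ref{4.6}(2), split $R^n$ into blocks $R^{n_j}$ along the orthogonal idempotents $\varphi(f_j)$, reduce FIP to each diagonal extension $R\subseteq R^{n_j}$, and finish with the same combinatorics on block sizes. The only cosmetic difference is that you prove the lattice decomposition $[\varphi(R^p),R^n]\cong\prod_j[R,R^{n_j}]$ directly from the idempotent splitting, where the paper cites \cite[Lemma III.3 and Proposition III.4]{DMPP}, and you invoke Proposition~\ref{4.1} where the paper invokes Theorem~\ref{4.2} (equivalent for a $\Sigma$PIR).
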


\begin{proof} We keep notation of Proposition~\ref{4.6}(2). Since $R$ is connected, any extension  $\varphi$ of $R$-algebra  $R^p \subseteq R^n$ comes from some partition $\cup_{j=1}^pA_j$ of $\{1,\ldots,n\}$ with $\varphi(f_j)=\sum_{i\in A_j}e_i$.  In view of \cite[Lemma III.3]{DMPP}, we may identify $S:= R^n$ with $\prod_{j=1}^pS_j$, where $S_j:=\varphi(f_j)S$ is a ring extension of $R$ for each $j$. Moreover, $R^p\subseteq R^n$ has FIP if and only if each $ R\subseteq S_j$ has FIP   \cite[Proposition III.4]{DMPP}. But $S_j$ is the $R$-algebra generated by $\{e_i\mid i\in A_j\}$, and then isomorphic to $R^{|A_j|}$. Consider the following cases and use Theorem~\ref{4.2} for each $R\subseteq S_j$. 

\noindent (1) $n=p+1$. Then, $|A_j|=1$ for all $j$, except one $j_0$ such that $|A_{j_0}|=2$. It follows that  $S_j$ is isomorphic either to $R$, or $R^2$. In both cases, $R\subseteq S_j$ has FIP and $R^p\subseteq R^n$ has FIP.

\noindent (2) $p+2\leq n\leq 2p$.  We consider two subcases:

(a) If $|A_j|=1$ for all $j$, except one $j_0$ such that $|A_{j_0}|=n-p+1\geq 3$, then $R\subset S_{j_0}$ has not FIP, whence also $R^p\subseteq R^n$. 

(b) Set  $k:=n-p\leq p$ and consider a partition $\{A_1,\ldots,A_p\}$   such that $|A_j|=2$ for $j\leq k$ and $|A_j|=1$ for $j>k$.  Then, $R\subseteq S_j$ has FIP for each $j$ and so has  $R^p\subseteq R^n$. We have proved that $R^p\subseteq R^n$ has FIP or not according to the structure of $R^p$-algebra considered for $R^n$. 

\noindent (3) $n\geq 2p+1$. Consider a partition as above. If  $|A_j|\leq 2$ for all $j$, then $n\leq 2p$ is a contradiction. Hence, there is $j_0$ such that $|A_{j_0}|>2$. It follows that $R\subset S_{j_0}$ has not FIP and $R^p\subset R^n$ has not FIP. 
\end{proof}

\begin{proposition}\label{4.8} Let $R$ be a (resp$.$ connected) ring and $1< p <n$  two integers. Then, $\varphi \in \mathrm{Exal}_R(R^p,R^n)$ has FIP if (resp$.$ and only if) $R$ is an FMIR and $n\leq 2p$ when $R$ is a $\Sigma$FMIR. 
\end{proposition}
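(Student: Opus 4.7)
The plan is to reduce the FIP question to Theorem~\ref{4.2} by decomposing $R^n$ along the orthogonal idempotents $\varphi(f_j)$. For any $\varphi\in\mathrm{Exal}_R(R^p,R^n)$, the idempotents $\varphi(f_1),\ldots,\varphi(f_p)$ form a complete orthogonal family in $R^n$, so by \cite[Lemma III.3]{DMPP} I can write $R^n\cong\prod_{j=1}^p S_j$ with $S_j:=\varphi(f_j)R^n$. Under this decomposition the embedding $\varphi$ becomes the diagonal $R^p\cong\prod_{j=1}^p R\hookrightarrow\prod_{j=1}^p S_j$, and \cite[Proposition III.4]{DMPP} then yields that $\varphi$ has FIP if and only if every $R\subseteq S_j$ has FIP.

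For the ``if'' part (general $R$) I will exhibit an explicit $\varphi$ by choosing a suitable partition. Any partition $\{A_1,\ldots,A_p\}$ of $\{1,\ldots,n\}$ into non-empty parts defines some $\varphi\in\mathrm{Exal}_R(R^p,R^n)$ via $\varphi(f_j):=\sum_{i\in A_j}e_i$, and for this $\varphi$ one checks directly (regardless of connectedness) that $S_j\cong R^{|A_j|}$ as $R$-algebras with their canonical diagonal $R$-structure. When $R$ is an FMIR that is not a $\Sigma$FMIR, Theorem~\ref{4.2} asserts that $R\subseteq R^k$ has FIP for every $k$, so any partition works. When $R$ is a $\Sigma$FMIR with $n\leq 2p$, I take a partition with $|A_j|\leq 2$ for all $j$, which is available because $n-p$ parts can be chosen of size $2$ and the remaining $2p-n$ parts of size $1$; each $R\subseteq S_j$ then has FIP by Theorem~\ref{4.2}, trivially when $|A_j|=1$ and via the $n=2$ clause when $|A_j|=2$.

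For the ``only if'' part I assume $R$ is connected and $\varphi$ has FIP. By Proposition~\ref{4.6}(2) the morphism $\varphi$ arises from a partition $\{A_1,\ldots,A_p\}$, so $S_j\cong R^{|A_j|}$. Since $n>p$ some block satisfies $|A_{j_0}|\geq 2$, hence $R\subseteq R^{|A_{j_0}|}$ has FIP and Theorem~\ref{4.2} forces $R$ to be an FMIR. If moreover $R$ is a $\Sigma$FMIR, the same theorem forces $|A_j|\leq 2$ whenever $|A_j|\geq 2$, so $|A_j|\leq 2$ for all $j$ and $n=\sum_{j=1}^p|A_j|\leq 2p$.

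The substantive input throughout is Theorem~\ref{4.2}; the rest is a mechanical reduction once the product decomposition $R^n\cong\prod_{j=1}^p S_j$ is in place. The only genuine subtlety — and the reason for the asymmetry between the two implications — is that for non-connected $R$, Proposition~\ref{4.6}(1) gives only the lower bound $|\mathrm{Exal}_R(R^p,R^n)|\geq S(n,p)$, and there may be morphisms $\varphi$ that do not come from partitions of $\{1,\ldots,n\}$. The ``only if'' direction, which starts with an arbitrary $\varphi$, therefore genuinely requires the connectedness hypothesis in order to invoke Proposition~\ref{4.6}(2).
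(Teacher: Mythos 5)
Your proof is correct and follows essentially the same route as the paper: decompose $R^n$ as $\prod_{j=1}^p S_j$ along the idempotents $\varphi(f_j)$ via \cite[Lemma III.3 and Proposition III.4]{DMPP}, reduce each factor to $R\subseteq R^{|A_j|}$ and apply Theorem~\ref{4.2}, with Proposition~\ref{4.6}(2) supplying the partition in the connected converse. Your explicit partition with $n-p$ blocks of size $2$ and $2p-n$ blocks of size $1$ is exactly the ``easy calculation'' the paper leaves implicit, and your reading of the forward implication as exhibiting a suitable $\varphi$ matches the paper's intent.
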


\begin{proof} We use the notation of the proof of Proposition~\ref{4.7} which holds for an arbitrary ring. Then, $R^p\subseteq R^n$ has FIP if and only if $R\subseteq S_j$ has FIP for each $j$. Fix a partition $\{A_1,\ldots,A_p\}$ of $\{1,\ldots,n\}$, so that $S_j\cong R^{|A_j|}$. Set $k_j=|A_j|$ and $k:=\sup\{k_j\}_{j= 1,\ldots,p}$. It follows that $R\subseteq S_j$ has FIP for each $j$ if and only if $R\subseteq R^k$ has FIP, since there are extensions $R^{k_j}\subseteq R^k$. But Theorem~\ref{4.2} shows that $R\subseteq R^k $ has FIP if and only if $R$ is a FMIR and $k\leq 2$ when $R$ is $\Sigma$FMIR. Assume that $R$ is a $\Sigma$FMIR. An easy calculation using the discussion of the proof of Proposition~\ref{4.7} leads to a partition $\{A_1,\ldots,A_p\}$ of $\{1,\ldots n\}$ such that $|A_j|\leq 2$ for each $j$ if and only if $n\leq 2p$, giving the wanted result. 

If $R$ is  connected,  Proposition~\ref{4.6}  tells us that  $\mathrm{Exal}_R(R^p,R^n)$ is  in bijection with  the set  $P(n,p)$ of  partitions $\{A_1,\ldots,A_p\}$ of $\{1,\ldots,n\}$. Assume that $ \varphi :R^p\hookrightarrow R^n$ has FIP, so that $R\subseteq S_j$ has FIP for each $j\in\{1,\ldots,p\}$. The first part of the proof shows that this holds if and only if $R$ is a FMIR and $k\leq 2$ when $R$ is a $\Sigma$FMIR, whatever is its associated  partition. 
\end{proof}

\section{Idealizations which are FCP or FIP extensions}

Let $M$ be an $R$-module. In this section, we  consider the ring extension $R\subseteq R(+)M$, where $R(+)M$ is the idealization of $M$ in $R$.  

Recall that $R(+)M:=\{(r,m)\mid (r,m)\in R\times M\}$ is a commutative ring whose operations are defined as follows: 

$(r,m)+(s,n)=(r+s,m+n)$ \ \   and  \ \ \ $(r,m)(s,n)=(rs,rn+sm)$

Then  $(1,0)$ is the unit of $R(+)M$, and $R \subseteq R(+)M$ is  a ring morphism defining $R (+)M$ as an $R$-module, so that we can identify any $r\in R$ with $(r,0)$. The following lemma will be useful for all this section. 

\begin{lemma}\label{5.1} Let  $M$  be an $R$-module, then $R\subseteq R(+)M$ is a subintegral extension with conductor $(0:M)$.
\end{lemma}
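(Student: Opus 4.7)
The plan is to verify the three defining conditions of subintegrality (Definition~\ref{1.3}) — integrality, infra-integrality of residual extensions, and bijectivity of the induced spectral map ${}^af$ — and then compute the conductor by a direct calculation from the multiplication rule of $R(+)M$.

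First I would observe that the ideal $0(+)M$ of $R(+)M$ is nilpotent of index $\leq 2$, because $(0,m)(0,n)=(0,0)$ for all $m,n\in M$. Hence $0(+)M\subseteq\mathrm{Nil}(R(+)M)$, so every prime ideal of $R(+)M$ contains $0(+)M$. Since the projection $(r,m)\mapsto r$ yields an isomorphism $R(+)M/(0(+)M)\cong R$, the prime ideals of $R(+)M$ are precisely those of the form $P(+)M$ for $P\in\mathrm{Spec}(R)$, and the contraction to $R$ sends $P(+)M$ to $P$. This simultaneously gives (i) surjectivity of ${}^af$ (and injectivity, since distinct $P$'s produce distinct $P(+)M$'s), hence ${}^af$ is bijective.

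Next, to confirm integrality I would exhibit an explicit monic relation: for every $(r,m)\in R(+)M$, the identity $((r,m)-(r,0))^2=(0,m)^2=(0,0)$ shows that $(r,m)$ is annihilated by $(X-r)^2\in R[X]$, so $R\subseteq R(+)M$ is integral. For the residual extension at $P\in\mathrm{Spec}(R)$, the natural composition $R/P\to(R(+)M)/(P(+)M)$ is the identity isomorphism by the calculation above, so the extension is infra-integral. Combined with the spectral bijectivity, this yields subintegrality.

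Finally for the conductor, I would compute it directly: an element $(r,n)\in R(+)M$ lies in $(R:R(+)M)$ iff $(r,n)(s,m)=(rs,\,rm+sn)$ belongs to $R$, i.e.\ has vanishing second coordinate, for every $(s,m)\in R(+)M$. Taking $s=0$ forces $rm=0$ for all $m\in M$, hence $r\in(0:M)$; taking $s=1,\,m=0$ forces $n=0$. Conversely, if $r\in(0:M)$ then $(r,0)(s,m)=(rs,0)\in R$ for all $(s,m)$. Under the identification $R\hookrightarrow R(+)M$, $r\mapsto(r,0)$, the conductor is thus exactly $(0:M)$. There is no genuine obstacle here: the result follows formally from the structure of the idealization; the only care needed is to keep track of the identification of $R$ with its image in $R(+)M$ and of $(0:M)$ with $(0:M)(+)0$.
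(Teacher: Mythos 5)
Your proof is correct and follows essentially the same route as the paper: the same monic quadratic relation $(r,m)^2-2r(r,m)+r^2=0$ for integrality, the identification $\mathrm{Spec}(R(+)M)=\{P(+)M\mid P\in\mathrm{Spec}(R)\}$ for subintegrality, and the same direct computation of the conductor. The only difference is cosmetic: where the paper cites \cite[Theorem 25.1(3)]{H} for the description of $\mathrm{Spec}(R(+)M)$, you derive it on the spot from the nilpotency of $0(+)M$, which makes the argument self-contained.
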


\begin{proof} If $(r,m)\in R(+)M$, then $(r,m)^2=2r(r,m)-r^2(1,0)$ shows that $R(+)M$ is integral over $R$. Moreover, by \cite[Theorem 25.1(3)]{H}, $\mathrm{Spec}(R(+)M)=\{P(+)M\mid P\in\mathrm{Spec}(R)\}$  implies that $R \subseteq R (+)M$ is subintegral.

Set $S:=R(+)M$ and let $x\in(R:S)$. Then, we have $(x,0)(0,m)=(0,xm)\in R$ for any $m\in M$, so that $ x\in(0:M)$. Conversely, any $x\in(0: M)$ gives $x(r,m)=(xr,0)\in R$ for any $(r,m)\in R(+)M$, which implies $x\in(R:S)$. So, we get $(R:S)=(0:M)$. 
\end{proof}

\begin{proposition}\label{5.2} Let  $M$ be  an $R$-module, then $R\subseteq R(+)M$ has FCP  if and only if ${\mathrm L}_R(M)< \infty$  and, if and only if $R/(0:M)$ is  Artinian and $M$ is f.g$.$ over $R$. 
\end{proposition}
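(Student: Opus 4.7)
The plan is to deduce both equivalences from a single structural observation: the poset $[R, R(+)M]$ is canonically isomorphic to the lattice of $R$-submodules of $M$. Given this, FCP for $R \subseteq R(+)M$ becomes the ascending plus descending chain condition on submodules of $M$, which is equivalent to $\mathrm{L}_R(M) < \infty$.

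First I would establish the bijection. For any $T \in [R, R(+)M]$, set $N_T := \{m \in M \mid (0,m) \in T\}$. Because $T$ is an $R$-submodule of $R(+)M$ containing $R$, and because $(r,m) = (r,0) + (0,m)$, one checks that $N_T$ is an $R$-submodule of $M$ and that $T = R(+)N_T$; moreover the multiplication on $R(+)N_T$ is automatically inherited from $R(+)M$ since $(r,n)(r',n') = (rr', rn'+r'n) \in R(+)N_T$ whenever $N_T$ is $R$-stable. Conversely, for any submodule $N$ of $M$, $R(+)N$ lies in $[R, R(+)M]$. The maps $T \mapsto N_T$ and $N \mapsto R(+)N$ are mutually inverse and inclusion-preserving.

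Consequently, chains in $[R, R(+)M]$ correspond bijectively to chains of $R$-submodules of $M$, so $R \subseteq R(+)M$ has FCP if and only if every chain of $R$-submodules of $M$ is finite, i.e.\ $M$ is both Artinian and Noetherian over $R$, which is equivalent to $\mathrm{L}_R(M) < \infty$.

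For the third characterization, I would argue as follows. If $\mathrm{L}_R(M) < \infty$, then $M$ is Noetherian and hence f.g.; furthermore, taking a composition series $0 = M_0 \subset \cdots \subset M_n = M$ with $M_i/M_{i-1} \cong R/\mathfrak{m}_i$ for some $\mathfrak{m}_i \in \mathrm{Max}(R)$, we get $\mathfrak{m}_1\cdots\mathfrak{m}_n \subseteq (0:M)$. Hence $R/(0:M)$ is a quotient of $R/(\mathfrak{m}_1\cdots\mathfrak{m}_n)$, which has a filtration with simple quotients and thus finite length as an $R$-module; being a ring acting faithfully on itself with finite length, $R/(0:M)$ is Artinian. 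Conversely, if $M$ is f.g. over $R$ and $R/(0:M)$ is Artinian, then $M$ is a finitely generated module over the Artinian ring $R/(0:M)$, hence has finite length over $R/(0:M)$, which coincides with its length over $R$ since the submodule structures agree. The only mildly delicate point is the composition-series argument for Artinianness of $R/(0:M)$; everything else is routine bookkeeping.
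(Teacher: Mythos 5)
Your reduction of FCP to the finiteness of chains of $R$-submodules of $M$ is correct, and the lattice isomorphism $[R,R(+)M]\cong\{R(+)N\mid N\ \text{a submodule of}\ M\}$ that you establish is precisely the content of the paper's Proposition~5.3; the paper instead obtains the first equivalence by citing the general characterization of FCP for integral extensions from \cite[Theorem 4.2]{DPP2} applied to $S/R\cong M$, so your route is more self-contained. However, the second equivalence contains a genuine gap in the direction $\mathrm{L}_R(M)<\infty\Rightarrow R/(0:M)$ Artinian. From a composition series you correctly get maximal ideals with $\mathfrak{m}_1\cdots\mathfrak{m}_n\subseteq(0:M)$, but the claim that $R/(\mathfrak{m}_1\cdots\mathfrak{m}_n)$ ``has a filtration with simple quotients and thus finite length'' fails for non-Noetherian $R$: in the filtration $R\supseteq\mathfrak{m}_1\supseteq\mathfrak{m}_1\mathfrak{m}_2\supseteq\cdots\supseteq\mathfrak{m}_1\cdots\mathfrak{m}_n$ the successive quotients are vector spaces over the fields $R/\mathfrak{m}_i$, but nothing forces them to be finite-dimensional. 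Concretely, let $k$ be a field, $R:=k[x_1,x_2,\ldots]/(x_ix_j)_{i,j\geq 1}$ with maximal ideal $\mathfrak{m}:=(x_1,x_2,\ldots)$, and $M:=R/(x_2,x_3,\ldots)\cong k[x_1]/(x_1^2)$. Then $\mathrm{L}_R(M)=2$ with both composition factors isomorphic to $R/\mathfrak{m}$, so your intermediate ring is $R/\mathfrak{m}^2=R$, which has infinite length over itself since $\mathfrak{m}=\mathfrak{m}/\mathfrak{m}^2$ is an infinite-dimensional $k$-vector space. The conclusion you want is still true here ($R/(0:M)\cong k[x_1]/(x_1^2)$ is Artinian), but not by your argument.

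The standard repair (this is Northcott's result, which the paper invokes in the proof of Proposition~5.8) avoids the product of maximal ideals altogether: write $M=Re_1+\cdots+Re_k$, which is possible since $M$ is Noetherian; note that $(0:M)=\bigcap_{i=1}^k(0:e_i)$, and embed $R/(0:M)\hookrightarrow\prod_{i=1}^kR/(0:e_i)\cong\prod_{i=1}^kRe_i\subseteq M^k$ as $R$-modules. Hence $\mathrm{L}_R(R/(0:M))\leq k\,\mathrm{L}_R(M)<\infty$, and since the ideals of $R/(0:M)$ are exactly its $R$-submodules, $R/(0:M)$ is Artinian. The remaining parts of your proof (the lattice isomorphism, the equivalence of FCP with $\mathrm{L}_R(M)<\infty$, and the converse direction of the last equivalence via finite generation over an Artinian ring) are correct.
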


\begin{proof} Set $S:=R(+)M$. Since $R\subseteq S$ is  integral, $R\subseteq S$ has FCP if and only if ${\mathrm L}_R(S/R)< \infty$  by \cite[Theorem 4.2]{DPP2}. By the same reference, this condition is equivalent to $R/(0:M)\cong R/(R:S)$ is  Artinian  and $R \subseteq S$ is module finite. But $R \subseteq S$ is  module finite implies that $S/R\cong M$ is also  f.g$.$. The converse is obvious. 
\end{proof}

For a submodule $N$ of an $R$-module $M$, we denote by $\llbracket N,M\rrbracket$ the set of all submodules of $M$ containing $N$ and set $\llbracket M\rrbracket:=\llbracket 0,M\rrbracket$. Recall that $M$ is called {\it uniserial} if $\llbracket M \rrbracket$ is linearly ordered. 

\begin{proposition}\label{5.3} Let $M$ be  an $R$-module, then $R\subseteq R(+)M$ is a $\Delta_0$-extension because $[R,R(+)M] = \{ R(+)N \mid N \in \llbracket M \rrbracket\}$.
\end{proposition}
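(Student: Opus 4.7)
The plan is to establish the set equality $[R, R(+)M] = \{R(+)N \mid N \in \llbracket M \rrbracket\}$ at the stronger level of $R$-submodules (not just $R$-subalgebras) of $R(+)M$ containing $R$; this will immediately yield the $\Delta_0$-property, since it shows that every such $R$-submodule is already an $R$-subalgebra. The decisive observation is that any $R$-submodule $T$ of $R(+)M$ with $R \subseteq T$ is completely determined by its ``second-coordinate part'' $N_T := \{m \in M \mid (0,m) \in T\}$.

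First I would check that $N_T$ is an $R$-submodule of $M$: closure under addition follows from the closure of $T$ under addition in $R(+)M$, and closure under scalar multiplication by $R$ follows from the identity $r \cdot (0,m) = (0,rm)$ in $R(+)M$ together with the fact that $T$ is an $R$-submodule. Next, to show that $T = R(+)N_T$, the inclusion $R(+)N_T \subseteq T$ is clear, as $R \subseteq T$ and $(0,n) \in T$ for every $n \in N_T$. For the reverse inclusion, given $(r,m) \in T$, I would use the splitting $(r,m) = (r,0) + (0,m)$ in $R(+)M$: since $(r,0) \in R \subseteq T$, we get $(0,m) = (r,m) - (r,0) \in T$, so $m \in N_T$ and $(r,m) \in R(+)N_T$.

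Finally, for any submodule $N$ of $M$, the set $R(+)N$ is in fact an $R$-subalgebra of $R(+)M$: the product rule $(r,n)(r',n') = (rr', rn' + r'n)$ keeps the second coordinate inside $N$, and the unit $(1,0)$ belongs to $R(+)N$. The correspondence $N \mapsto R(+)N$ is evidently injective since $N$ is recovered from $R(+)N$ as its second-coordinate part. Combining these steps, every $R$-submodule of $R(+)M$ containing $R$ equals $R(+)N$ for a unique $N \in \llbracket M \rrbracket$, and every such $R(+)N$ is an $R$-subalgebra; hence $R \subseteq R(+)M$ is a $\Delta_0$-extension and the claimed bijection holds.

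There is no real obstacle: the argument is almost entirely a bookkeeping consequence of the decomposition $(r,m) = (r,0) + (0,m)$, and the subalgebra structure of $R(+)N$ is immediate from the idealization's multiplication rule.
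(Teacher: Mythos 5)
Your proof is correct and follows essentially the same route as the paper: both arguments take an $R$-submodule $T$ of $R(+)M$ containing $R$, extract the second-coordinate submodule $N$ (the paper uses the image of $T$ under the projection $(r,m)\mapsto m$, you use the fiber over $0$ of the first coordinate, and these coincide once $R\subseteq T$), and conclude $T=R(+)N$ from the decomposition $(r,m)=(r,0)+(0,m)$. Your explicit verification that each $R(+)N$ is a subalgebra and that $N\mapsto R(+)N$ is injective is a harmless elaboration of what the paper leaves implicit.
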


\begin{proof} Set $S:=R(+)M$ and let $T\in\llbracket R,S\rrbracket$. Let $p:S\to M$ be the projection defined by $p(r,m)=m$, for any $(r,m)\in S$ and set $N:=p(T)$. Obviously, $T\subseteq R\times N$. Let $(r,n)\in R\times N$. There exists $x\in R$ such that $(x,n)\in T$. But $(r,0)$ and $(x,0)\in R\subseteq T$. This implies $(0,n)\in T$, which gives $(r,n)\in T$, so that $T=R\times N$ as $R$-submodules of $S$, and $T=R(+)N$ as $R$-subalgebras of $S$. 
\end{proof}

We say that an $R$-module $M$  is  an FMS module if $M$ has finitely many $R$-submodules. An FMS $R$-module $M$ is Noetherian and Artinian and $R/(0:M)$ is a Noetherian and Artinian ring. We denote by $\nu_R(M)$  (or $\nu (M)$) the number of submodules of an  FMS $R$-module $M$. Hence, $\nu(R)$ is the number of ideals of an FMIR $R$.  
\begin{proposition}\label{5.4} Let  $M$  be an $R$-module, then $R\subseteq R(+)M$ has FIP if and only if $M$  is an FMS module.  In this case, $|[R,R(+)M]| =\nu(M)$. 
\end{proposition}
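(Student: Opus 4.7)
The proof will be essentially immediate from Proposition~\ref{5.3}, which already establishes the set-theoretic description $[R, R(+)M] = \{R(+)N \mid N\in \llbracket M\rrbracket\}$. The plan is to upgrade this description to an honest bijection and then read off both assertions.

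First I would observe that the assignment $\Phi\colon \llbracket M\rrbracket \to [R, R(+)M]$ defined by $N\mapsto R(+)N$ is injective. Indeed, intersecting with the ideal $\{0\}(+)M$ of $R(+)M$ recovers $\{0\}(+)N$ from $R(+)N$, so $R(+)N = R(+)N'$ forces $N = N'$. Combined with the surjectivity furnished by Proposition~\ref{5.3}, this makes $\Phi$ a bijection between $\llbracket M\rrbracket$ and $[R, R(+)M]$.

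From this bijection both claims follow at once. The extension $R\subseteq R(+)M$ satisfies FIP precisely when $[R, R(+)M]$ is finite, which via $\Phi$ is equivalent to $\llbracket M\rrbracket$ being finite, i.e.\ $M$ being an FMS module. Moreover, the cardinality equation $|[R, R(+)M]| = |\llbracket M\rrbracket| = \nu(M)$ is immediate from the bijection.

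There is essentially no obstacle here once Proposition~\ref{5.3} is in hand; the only genuine verification is injectivity of $\Phi$, and that is a one-line consequence of the canonical decomposition of elements of $R(+)M$ as pairs. Thus the proof can be written in just a few lines.
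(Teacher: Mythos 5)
Your proof is correct and follows essentially the same route as the paper, which likewise deduces everything from the description of $[R,R(+)M]$ in Proposition~\ref{5.3}; your explicit check that $N\mapsto R(+)N$ is injective is a small detail the paper leaves tacit. Nothing further is needed.
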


\begin{proof} Set $S:=R(+)M$. By Proposition~\ref{5.3}, $[R,R(+)M] = \{ R(+)N \mid N \in \llbracket M \rrbracket\}$. It follows that $R\subseteq S$ has FIP if and only  if $M$ is an FMS module. In this case, $|[R,R(+)M]|= \nu(M)$.
\end{proof}

We now intend to characterize FMS modules by using the previous proposition.

\begin{theorem}\label{5.5} An $R$-module $M$ over a quasi-local ring $(R,P)$ is an FMS module if and only if  conditions $\mathrm{(1)}$ and $\mathrm{(2)}$ hold with $C=(0:M)$:

\begin{enumerate}
\item $M$ is finitely generated, and cyclic when $|R/P|=\infty$.

\item $R/C$ is an FMIR.
\end{enumerate}

If $M$ is an FMS $R$-module, $(R,P)$ is local, $|R/P|=\infty$, and $M=Re$ for some $e\in M$, then $M$ is uniserial, $\llbracket M\rrbracket=\{P^je\ |\ j=0,\ldots,m\}$, with $m:=n(R/C)=\nu(R/C)-1$ and $|[R,R(+)M]|=m+1$.

Assume in addition that $P=(0:M)$. Then $R\subseteq R(+)M$ has FIP if and only if $M$ is simple, if and only if  $R\subseteq R(+)M$ is minimal ramified. 
\end{theorem}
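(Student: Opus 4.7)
The plan is to leverage the bijection from Proposition~\ref{5.3} between $[R,R(+)M]$ and $\llbracket M\rrbracket$ (via $R(+)N \leftrightarrow N$), together with Proposition~\ref{5.4} (FIP $\Leftrightarrow$ FMS, with matching cardinalities). Lemma~\ref{5.1} provides subintegrality of $R\subseteq R(+)M$ with conductor $C=(0:M)$, and Proposition~\ref{5.2} identifies FCP with ``$M$ is f.g$.$ and $R/C$ is Artinian''. Throughout, passing to $R/C$ is harmless because the map $[R,R(+)M]\to[R/C,(R/C)(+)M]$ is bijective and $(0:_{R/C}M)=0$, so reducing to the zero-conductor case is always available.

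For the equivalence with (1) and (2), I would argue as follows. If $M$ is FMS, then by Propositions~\ref{5.4} and~\ref{5.2} the extension $R\subseteq R(+)M$ has FCP, so $M$ is f.g$.$ and $R/C$ is Artinian. When $|R/P|=\infty$, Proposition~\ref{1.6}(1) applied to the subintegral FIP extension $R/C\subset(R/C)(+)M$ forces $[R,R(+)M]$ to be linearly ordered; via Proposition~\ref{5.3} this makes $\llbracket M\rrbracket$ totally ordered, i.e$.$ $M$ uniserial. A f.g$.$ uniserial module is automatically cyclic: any finite generating set $\{m_1,\ldots,m_n\}$ has some $Rm_j$ containing all the others by uniseriality, whence $M=Rm_j$. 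Thus $M\cong R/C$, and the FMS property transfers to $R/C$, yielding (2). When instead $|R/P|<\infty$, the local Artinian ring $R/C$ is automatically finite (filter by powers of $P/C$ with finite $R/P$-vector space quotients), so it is trivially an FMIR. Conversely, assuming (1) and (2): in the infinite residue field case cyclicity gives $M\cong R/C$, whose submodules are the finitely many ideals of the FMIR $R/C$; in the finite residue case Proposition~\ref{2.3} makes $R/C$ a finite ring, hence the f.g$.$ $R/C$-module $M$ is finite, hence FMS.

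The remaining assertions then follow mechanically. With $M=Re$ and $|R/P|=\infty$, Proposition~\ref{2.3} forces $R/C$ to be either an infinite field or an infinite SPIR, whose ideals form the chain $(P/C)^0\supset\cdots\supset(P/C)^m=0$ with $m=n(R/C)$, so $\nu(R/C)=m+1$; pulling back along $I/C\mapsto Ie$ (using $Ce=0$) gives $\llbracket M\rrbracket=\{P^je\mid 0\le j\le m\}$, and Proposition~\ref{5.4} delivers $|[R,R(+)M]|=m+1$. Under the additional assumption $P=(0:M)$, the ring $R/C=R/P$ is a field, $M$ is a nonzero $R/P$-vector space, and $|R/P|=\infty$ forces $M$ to be FMS exactly when $\dim M=1$, i.e$.$ when $M$ is simple; Proposition~\ref{5.3} then turns ``$M$ simple'' into ``$R\subset R(+)M$ minimal'', and subintegrality (Lemma~\ref{5.1}) together with the remark after Definition~\ref{1.3} (a minimal extension is ramified iff subintegral) closes the chain of equivalences. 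The main obstacle is the uniserial-to-cyclic passage after invoking Proposition~\ref{1.6}(1); everything else is a direct consequence of the structural machinery already established.
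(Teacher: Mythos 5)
Your proof is correct, and it reaches the crucial step --- showing that an FMS module over a local ring with infinite residue field is cyclic --- by a genuinely different route. The paper argues directly and combinatorially: it lists the finitely many cyclic submodules $Re_1,\ldots,Re_n$, observes $M=\bigcup Re_i$, and if two of them are incomparable it produces, via an infinite set $\mathcal{F}$ of unit representatives of $R/P$, the elements $m_\alpha:=e_1+\alpha e_2$ which must land in pairwise distinct $Re_i$, contradicting the pigeonhole principle; uniseriality and cyclicity follow. You instead transport the problem to the idealization and invoke Proposition~\ref{1.6}(1) for the subintegral FIP extension $R/C\subset (R/C)(+)M$ to get that $[R,R(+)M]$, hence $\llbracket M\rrbracket$ by Proposition~\ref{5.3}, is linearly ordered, and then use the elementary fact that a finitely generated uniserial module is cyclic. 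This is shorter and arguably cleaner, but it leans on the machinery behind Proposition~\ref{1.6}(1) (namely \cite[Proposition 5.15]{DPP2} and \cite[Theorem 3.8, Lemma 3.6]{ADM}), whereas the paper's pigeonhole argument is self-contained at this point; there is no circularity either way. A second, smaller divergence: in the converse for $|R/P|=\infty$ the paper routes through Gilbert's result (Proposition~\ref{1.4}) applied to $S=R+Rf$ with $f=(0,e)$, while you simply use $M\cong R/C$ and the order isomorphism between $\llbracket M\rrbracket$ and the (finitely many) ideals of the FMIR $R/C$ --- which is all that is needed and also yields the description $\llbracket M\rrbracket=\{P^je\mid 0\le j\le m\}$ and the count $\nu(R/C)=m+1$ directly. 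The remaining parts (finite residue field case via finiteness of the Artinian local ring $R/C$, and the equivalences under $P=(0:M)$ using simplicity, minimality, and the fact that a minimal subintegral extension is ramified) match the paper's treatment.
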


\begin{proof} Note that $R$-submodules and $R/C$-submodules of $M$ coincide.
Assume that $M$ is an FMS module. Then Proposition~\ref{5.4} shows that $R\subseteq R(+)M$ has FIP, whence has FCP. We deduce from Proposition~\ref{5.2} that $M$ is f.g$.$ and $(R/C,P/C)$ is  local Artinian. To prove (2), we consider two cases. If $|R/P|<\infty$, then $|R/C|<\infty$ (see the remark before Lemma~\ref{2.13}),  so that $R/C$ is an FMIR. 

Assume now that $|R/P|=\infty$.  Denote by $Re_1,\ldots,Re_n$ , with $e_i\in M$,  the finitely many cyclic submodules of $M$. Then for any $m\in M$, there is some $i$ such that $Rm=Re_i$. Hence, $M=\cup_{i=1}^nRe_i$. If $n=1$, then $M$ is cyclic and uniserial since $\llbracket M\rrbracket=\{0,M\}$. Assume that $n>1$ and that $M$ is not uniserial. Then $M$ has two incomparable cyclic submodules, for instance $Re_1$ and $Re_2$, and we may assume that $Re_2\not\subseteq Re_i$ for any $i\in\{1, \ldots,n\}\setminus\{2\}$. Let $\mathcal{F}$ be a(n infinite) set of representative of the non-zero elements of $R/P$. Then, each $\alpha\in\mathcal{F}$ is a unit of $R$. For each $\alpha\in\mathcal{F}$, set $m_ {\alpha}:=e_1+\alpha e_2$. Obviously $m_{\alpha}\not\in Re_1\cup Re_2$, so that $m_{\alpha}\in Re_i$, for some $i\neq 1,2$. Let $\alpha,\beta\in\mathcal{F},\ \alpha\neq\beta$. We claim that $m_{\alpha}$ and $m_{\beta}$ are not in the same $Re_i$. Deny, then $m_{\alpha}-m_{\beta}=(\alpha-\beta)e_2\in Re_i$ and $\alpha-\beta$ is a unit implies $e_ 2\in Re_i$, a contradiction. By the pigeonhole principle, this is absurd and $M$ is uniserial and necessarily   cyclic.

Assume that $M$ is an FMS module with $M=Re$ for some $e\in M$; so that $C=(0:e)$. Set $R':=R/C,\ P':=P/C$ and $I_N:=(N:_Re)$ for $N\in\llbracket M\rrbracket$. Then, $I_N\in\llbracket C,R\rrbracket$ and is such that $N=I_Ne$. Conversely, $I\in\llbracket C,R\rrbracket$ is such that $I=I_{Ie}$ with $Ie\in\llbracket M\rrbracket$, since $C\subseteq I$. We define a bijective map $\psi:\llbracket C,R\rrbracket\to\llbracket M \rrbracket$ by $I\mapsto Ie$. It follows that $R'$ is an FMIR (either a field or a SPIR) and $\nu(M)=\nu(R/C)$.

If $R'$ is a SPIR, there is some $x\in P$, whose class $\bar x\in R'$ is such that $P'=R'\bar x$, $\bar x^m =0$ and $\bar x^{m-1}\neq 0$, for $m:=n(R')>1$. It follows that $\llbracket C,R\rrbracket=\{P^j+C|j\in\{0 ,\ldots,m\}\}$ and $\llbracket M\rrbracket=\{P^je|j\in\{0,\ldots,m\}\}$ (to see this, use the bijection $\psi$). If $R'$ is a field, then $P=C$ gives $m=1$.

Now, assume that (1) and (2) hold. There is no harm to suppose that $C=0$ and that $R$ is an FMIR, so that $(R, P)$ is local Artinian. If $|R/P|<\infty$, we get that $|M|<\infty$ and then $M$ is an FMS module. Assume that $|R/P|=\infty$, and that $M=Re$ is cyclic. If $P=0$, then $M$ is a one-dimensional vector space over the field $R$, so that $\nu(M)=2=\nu(R)$. If $P\neq 0$, consider $S:=R(+)M=R+Rf$, where $f=(0,e)$. From \cite[Proposition 4.12]{Gil} we deduce that $|\llbracket R,S\rrbracket|<\infty$, since $R$ is an FMIR and also that there is a bijective map $\llbracket R,S\rrbracket\to\llbracket M\rrbracket$. In fact $\llbracket R,S\rrbracket=\{R(+)N \mid N\in\llbracket M\rrbracket \}$. By Proposition~\ref{5.3},  $M$ is an FMS module.

To end, assume that $(R,P)$ is quasi-local with $|R/P|=\infty$. Let $M$ be a simple $R$-module, with $P=(0:M)$. Then $[R,R(+)M]=\{R,R(+)M\}$ by Proposition~\ref{5.3}. It follows that $R\subseteq R(+)M$ has FIP and is a minimal ramified extension since minimal subintegral.
\end{proof}

\begin{corollary}\label{5.6} Let  $M$ be an $R$-module and $C:=(0:M)$. Then $M$ is an FMS module if and only if the two following conditions hold:

\begin{enumerate}
\item $M$ is f.g$.$ and $M_P$  cyclic  for all $P\in{\mathrm V}(C)$ such that $|R/P|=\infty$.

\item $R/C$ is an FMIR.
\end{enumerate}

In case $\mathrm{(1),(2)}$ both hold, set $\{P_1,\ldots,P_n\} ={\mathrm V}(C)$ and suppose that each $|R/P_i|=\infty$. Then $M$ is generated by some $e_1,\ldots,e_n\in M$, such that $M_{P_i}=R_{P_i}(e_i/1)$ for each $i$. 
\end{corollary}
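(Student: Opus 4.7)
My plan is to reduce to the quasi-local setting of Theorem~\ref{5.5} by localizing at the maximal ideals in $\mathrm{V}(C)$. I would begin by recording the two compatibilities I will need over and over: as $R$-modules, $R(+)M/R \cong M$, and idealization commutes with localization, $(R(+)M)_P = R_P(+)M_P$ with conductor $(0:_{R_P}M_P) = C_P$. Together these give $\mathrm{MSupp}_R(R(+)M/R) = \mathrm{V}(C)\cap\mathrm{Max}(R)$, which is exactly the support on which Proposition~\ref{2.7} lets me transfer the FIP property to and from localizations. By Proposition~\ref{5.4}, ``$M$ is FMS'' is interchangeable with ``$R\subseteq R(+)M$ has FIP'', both globally and locally; this will be the engine of the whole argument.

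For the ``only if'' direction, I would upgrade FIP to FCP and apply Proposition~\ref{5.2} to obtain that $M$ is f.g.\ and $R/C$ is Artinian, which already forces $\mathrm{V}(C)$ to be a finite set of maximal ideals. Proposition~\ref{2.7} then pushes the FIP down to each $R_P\subseteq R_P(+)M_P$, so $M_P$ is FMS over $R_P$, and Theorem~\ref{5.5} delivers both required conclusions locally: $M_P$ is cyclic whenever $|R_P/PR_P|=|R/P|=\infty$, and $R_P/C_P \cong (R/C)_P$ is a local FMIR for every $P\in\mathrm{V}(C) = \mathrm{Max}(R/C)$. Proposition~\ref{2.3} promotes this to the global statement that $R/C$ is an FMIR. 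For the converse, condition~(2) already makes $\mathrm{V}(C) = \{P_1,\ldots,P_n\}$ finite, and I would check that each $M_{P_i}$ fulfills the hypotheses of Theorem~\ref{5.5}: cyclic when $|R/P_i|=\infty$ by~(1), and finite when $|R/P_i|<\infty$ (since then $R_{P_i}/C_{P_i}$ is finite and $M_{P_i}$ is f.g.\ over it), hence FMS in either case. Proposition~\ref{5.4} turns this into FIP for each $R_{P_i}\subseteq R_{P_i}(+)M_{P_i}$, Proposition~\ref{2.7} then reassembles these into FIP for $R\subseteq R(+)M$, and Proposition~\ref{5.4} one last time yields that $M$ is FMS.

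For the final structural claim, under the stated hypotheses $R/C \cong \prod_{i=1}^n (R/C)_{P_i}$ by Proposition~\ref{2.3}. Since $CM=0$, $M$ is a module over $R/C$, and the product decomposition splits it as $M = \bigoplus_{i=1}^n M^{(i)}$ with $M^{(i)}$ naturally identified with $M_{P_i}$. Each $M_{P_i}$ being cyclic, I pick a generator and lift it to $e_i\in M$ supported in the $i$th summand; then $Re_i = M^{(i)}$, so $\{e_1,\ldots,e_n\}$ generates $M$ and $M_{P_i} = R_{P_i}(e_i/1)$. The main obstacle here is mild: it is essentially the localization bookkeeping, and in particular keeping straight that the residue field condition of Theorem~\ref{5.5} at $R_P$ is the same as the one stated here in terms of $R/P$ via the isomorphism $R_P/PR_P \cong R/P$. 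Everything else is a routine assembly of previously established propositions.
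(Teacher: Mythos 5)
Your proposal is correct and follows essentially the same route as the paper: both directions reduce to the quasi-local case of Theorem~\ref{5.5} by localizing at the (finitely many, maximal) primes of $\mathrm{V}(C)$, with Propositions~\ref{5.2}, \ref{5.4} and \ref{2.7} supplying the f.g./Artinian consequences and the local--global transfer, and the final generating-set claim handled by a routine local check. The only cosmetic difference is that the paper verifies the converse directly on submodules via $N=\cap_i\varphi_i^{-1}(N_{P_i})$ and proves $M'=\sum Re_i$ equals $M$ by localization, whereas you route both steps through the idealization and the idempotent decomposition of $R/C$; these are interchangeable.
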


\begin{proof} If $M$ is an FMS module, Proposition~\ref{5.4} shows that $R\subseteq R(+)M$ has FIP, and then has FCP. Then, $M$ is f.g$.$ and $R/C$ is  Artinian by Proposition~\ref{5.2}. Let $P\in{\mathrm V}(C)$, then $M_P$ is an FMS module, so that we can use Theorem~\ref{5.5}. Then $R_P/C_P\cong(R/C)_P$ is an FMIR, and so is $R/C$, since $|{\mathrm V}(C)|<\infty$, which gives (2). Moreover, for  $P\in{\mathrm V}(C)$ with $|R/P|=\infty$, Theorem~\ref{5.5} gives that  $M_P$ is cyclic and (1) holds.

Conversely, if (1) and (2) hold, they also hold for each $M_P$, where $P\in{\mathrm V}(C)$. Theorem~\ref{5.5} gives that $M_P$ is an FMS module for any $P\in{\mathrm V}(C)$. To show that $M$ is an FMS module, there is no harm to suppose that $C= 0$, so that $R$ is Artinian, with $\mathrm{Max}(R)= \{P_1,\ldots,P_n\}$. Now if $N$ is a submodule of $M$, it is well known that $N = \cap_{i=1}^n \varphi_i^{-1}(N_{P_i})$, where $\varphi_i :M \to  M_{P_i}$ is the natural map and $M$ is an FMS module.

Now, assume that (1) and (2) hold and that $|R/P|=\infty$ for any $P\in{\mathrm V}(C)=\{P_1,\ldots,P_n\}$.  For each $j=1, \ldots, n$, there is some $e_j\in M$ such that $M_{P_j} = R_{P_j}(e_j/1)$. Set $M' := Re_1+\cdots+Re_n$. It is easy to show that  $M'_{P_j}=M_{P_j}$ for $j=1\dots,n$.  Observe that ${\mathrm V}(C)=\mathrm{Supp}(M)$, because $M$ is f.g$.$ (\cite[Proposition 17, ch. II,  p.133]{Bki AC}). Now let $P\in\mathrm{Max}(R)\setminus{\mathrm V}(C)$. We get that $M'_P\subseteq M_P=0$ and then $M'=M$.
\end{proof}

Let  $N$  be a submodule of an $R$-module. By Proposition~\ref{5.3}, $R(+)N$ is an $R$-subalgebra of $R(+)M$ and then $R(+)M$ is an ($R(+)N$)-algebra. Even if $R\subseteq R(+)M$ has not FCP (resp$.$ FIP), it may be that $R(+)N\subseteq R(+)M$ has FCP (resp$.$ FIP).  

Any ($R(+)N$)-subalgebra of $R(+)M$ is an $R$-subalgebra of $R(+)M$, and then is of the form $R(+)N' $, for some $N'\in\llbracket N,M\rrbracket$ since $R(+)N\subseteq R(+)N'$. Conversely, for any $R$-subalgebra $N'$ of $M$ containing $N$, $R(+)N'$ is an ($R(+)N$)-subalgebra of $R(+)M$. In particular, $R(+)N\subseteq R(+)M$ is a minimal extension if and only if $M/N$ is a simple module. 

\begin{proposition}\label{5.7} Let  $N$ be a submodule  of an $R$-module $M$. Then:

\begin{enumerate}
\item  $R(+)N\subseteq R(+)M$ is a $\Delta_0$-extension.

\item $R(+)N\subseteq R(+)M$ has FCP if and only if ${\mathrm L}_R(M/N)<\infty$. In this case, $\ell[R(+)N,R(+)M]={\mathrm L}_R(M/N)$.

\item $R(+)N\subseteq R(+)M$ has FIP if and only if $M/N$ is an FMS module. In this case, $|[R(+)N,R(+)M]|= \nu(M/N)$.
\end{enumerate}
\end{proposition}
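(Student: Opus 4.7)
The plan is to reduce everything to an order-preserving bijection between $[R(+)N,R(+)M]$ and $\llbracket M/N\rrbracket$. Recall the observation made immediately before the proposition: any $(R(+)N)$-subalgebra of $R(+)M$ has the form $R(+)N'$ for some $N'\in\llbracket N,M\rrbracket$, and conversely each such $N'$ yields an $(R(+)N)$-subalgebra. So the map $\Phi:\llbracket N,M\rrbracket\to[R(+)N,R(+)M]$, $N'\mapsto R(+)N'$, is a bijection; it is clearly an order-isomorphism since $R(+)N_1\subseteq R(+)N_2$ iff $N_1\subseteq N_2$. Composing with the standard lattice isomorphism $\llbracket N,M\rrbracket\cong\llbracket M/N\rrbracket$ given by $N'\mapsto N'/N$, I obtain an order-isomorphism $\Psi:[R(+)N,R(+)M]\to\llbracket M/N\rrbracket$.

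For (1), I would argue as follows. Let $T$ be an $(R(+)N)$-submodule of $R(+)M$ with $R(+)N\subseteq T\subseteq R(+)M$. Then $T$ is in particular an $R$-submodule of $R(+)M$ containing $R$, and Proposition~\ref{5.3} yields some $N'\in\llbracket M\rrbracket$ with $T=R(+)N'$; the inclusion $R(+)N\subseteq T$ forces $N\subseteq N'$, so $T\in[R(+)N,R(+)M]$. Thus $R(+)N\subseteq R(+)M$ is a $\Delta_0$-extension.

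For (2) and (3), I would simply transport properties along $\Psi$. A chain in $[R(+)N,R(+)M]$ corresponds, via $\Psi$, to a chain in $\llbracket M/N\rrbracket$ of the same length; hence the extension has FCP iff every chain of submodules of $M/N$ is finite, which is equivalent to $\mathrm{L}_R(M/N)<\infty$. Taking suprema over chain-lengths then gives $\ell[R(+)N,R(+)M]=\mathrm{L}_R(M/N)$. Likewise, $[R(+)N,R(+)M]$ is finite iff $\llbracket M/N\rrbracket$ is finite, i.e., iff $M/N$ is an FMS module, and in that case $|[R(+)N,R(+)M]|=|\llbracket M/N\rrbracket|=\nu(M/N)$. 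The only subtle point is the $\Delta_0$ part, and the reduction to Proposition~\ref{5.3} handles it cleanly; the rest is a bookkeeping exercise with an order isomorphism.
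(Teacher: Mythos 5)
Your proposal is correct and follows essentially the same route as the paper: both rest on Proposition~\ref{5.3} for the $\Delta_0$ claim and on the identification of $[R(+)N,R(+)M]$ with $\llbracket N,M\rrbracket\cong\llbracket M/N\rrbracket$, from which FCP, FIP, $\ell[R(+)N,R(+)M]={\mathrm L}_R(M/N)$ and $|[R(+)N,R(+)M]|=\nu(M/N)$ all follow by transport along that order-isomorphism. The only cosmetic difference is that for (2) the paper routes the FCP equivalence through the characterization of FCP for integral extensions in \cite[Theorem 4.2]{DPP2}, whereas you transport chains directly; the substance is the same.
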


\begin{proof} (1) By Proposition~\ref{5.3}, $R\subseteq R(+)M$ is a $\Delta_0$-extension. Since an ($R (+)N$)-submodule $S$ of $R(+)M$ containing $R$ is also an $R$-submodule of $R(+)M$, we get that $S$ is a ring, so that $R(+)N\subseteq R(+)M$ is a $\Delta_0$-extension.

(2) By Lemma~\ref{5.1}, $R\subseteq R(+)M$ is  integral and so is $R(+)N\subseteq R(+)M$. Therefore, the following conditions are equivalent:

- $R(+)N\subseteq R(+)M$ has FC

- there exists a finite chain of minimal finite extensions going from $R(+)N$ to $R(+)M$ (\cite[Theorem 4.2(2)]{DPP2}) 

- there is a finite maximal chain of $R$-submodules of $M$ going from $N$ to $M$ 

-
 ${\mathrm L}_R(M/N)<\infty$. 
 
 In this case, $\ell[R(+)N,R(+)M]={\mathrm L}_R(M/N)$,  the supremum of the lengths of chains of submodules of $M$ containing $N$.

(3) The following conditions are equivalent:

- $R(+)N\subseteq R(+)M$ has FIP 

- there are finitely many $R(+)N$-subalgebras of $R (+)M$ 

- there are finitely many $R$-subalgebras of $R(+)M$ containing $R(+)N$ 

- there are finitely many $R$-submodules of $M$ containing $N$ 

- $M/N$ is an FMS module. 

In this case, $|[R(+)N,R(+)M]|$  is also the number of $R$-subalgebras of $M$ containing $N$, which is also $\nu(M/N)$.
\end{proof}

We consider now the special case where $M$ is  an ideal $I$ of $R$.

\begin{proposition}\label{5.8} Let $I$ be an ideal of a ring $R$, $S:= R(+)R$ and $T:= R(+)I$. Then:

\begin{enumerate}
\item $R\subseteq S$ has FCP if and only if ${\mathrm L}_R(R)<\infty$ if and only if $R$ is Artinian. In this case, $\ell[R,R(+)R]={\mathrm L}_R(R)$.

\item $R\subseteq T$ has FCP if and only if ${\mathrm L}_R(I)<\infty$ if and only if $I$ is finitely generated and $R/(0:I)$ is  Artinian. In this case, $\ell[R,R(+)I]={\mathrm L}_R(I)$.

\item $R\subseteq S$ has FIP if and only if $R$ is an FMIR. In this case, $|[R,R(+)R]|=\nu(R)$.

\item $R\subseteq T$ has FIP if and only if $\llbracket  I \rrbracket$ is finite. In this case, $|[R,R(+)I]|= \nu(I)$.
\end{enumerate}
\end{proposition}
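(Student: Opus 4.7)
The plan is to derive all four statements as immediate specializations of the general results Propositions~\ref{5.2},~\ref{5.4}, and~\ref{5.7} to the modules $M=R$ (for statements (1) and (3)) and $M=I$ (for statements (2) and (4)).

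For (1), I apply Proposition~\ref{5.2} with $M=R$: since $R$ is cyclic over itself (so automatically finitely generated) and $(0:R)=0$, the conjunction ``$M$ is f.g.\ and $R/(0:M)$ Artinian'' collapses to ``$R$ Artinian'', which in turn is equivalent to $\mathrm{L}_R(R)<\infty$ by Hopkins--Levitzki. The length formula $\ell[R,R(+)R]=\mathrm{L}_R(R)$ follows from Proposition~\ref{5.7}(2) with $N=0$, since $R(+)0=R$. For (2), Proposition~\ref{5.2} applied to $M=I$ gives directly the stated FCP equivalence, and the length formula $\ell[R,R(+)I]=\mathrm{L}_R(I)$ again follows from Proposition~\ref{5.7}(2) with $N=0$.

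For (3), I apply Proposition~\ref{5.4} with $M=R$: the extension $R\subseteq R(+)R$ has FIP iff $R$, viewed as an $R$-module, is an FMS module, which by definition means $R$ has finitely many submodules, i.e., finitely many ideals, i.e., $R$ is an FMIR. The cardinality formula is the corresponding specialization $|[R,R(+)R]|=\nu(R)$. For (4), Proposition~\ref{5.4} applied to $M=I$ immediately gives that $R\subseteq R(+)I$ has FIP iff $I$ is an FMS module, i.e., $\llbracket I\rrbracket$ is finite, with $|[R,R(+)I]|=\nu(I)$.

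There is no real obstacle here: the proposition is a consolidation of the previous two results in the two natural cases $M=R$ and $M=I$. The only sliver of verification lies in (1), namely observing that for the $R$-module $R$ the finite generation hypothesis is trivial and the annihilator is zero, so that ``$R/(0:R)$ Artinian'' simplifies to ``$R$ Artinian''.
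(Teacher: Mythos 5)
Your proposal is correct and follows essentially the same route as the paper: both derive all four statements by specializing the general results on $R\subseteq R(+)M$ (the paper leans on Proposition~\ref{5.7} with $N=0$ together with Northcott's length criteria, while you cite Propositions~\ref{5.2} and~\ref{5.4} directly, which already package those equivalences). The specializations to $M=R$ and $M=I$, including the observation that $(0:R)=0$ makes the condition in (1) collapse to ``$R$ Artinian,'' are exactly as in the paper.
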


\begin{proof} Proposition~\ref{5.7} gives most of results, taking $N=0$, because $R(+)0\cong R$ with $M$ equal either to $R$ or $I$. 

(1) By \cite[Theorem 7, p.24]{N}, ${\mathrm L}_R(R)<\infty$ if and only if $R$ is  Artinian.

(2) If ${\mathrm L}_R(I)<\infty$, then \cite[Proposition 9, p.22 and Theorem 7, p.24]{N} give that $I$ is of finite type and it follows that $R/(0:I)$ is Artinian  by \cite[Corollary of Theorem 2, p.181]{N}. Conversely, this corollary gives that ${\mathrm L}_R(I)<\infty$ when $I$ is finitely generated and $R/(0:I)$ is  Artinian.
\end{proof}

\begin{proposition}\label{5.9}  Any f.g$.$ module over a ring $R$ is an  FMS module if and only if $R$ is a finite ring.
\end{proposition}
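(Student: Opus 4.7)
The plan is to treat the two directions separately. For the reverse implication, if $R$ is finite then any finitely generated $R$-module $M = Re_1 + \cdots + Re_n$ is a quotient of the finite $R$-module $R^n$, hence finite, and thus trivially has only finitely many submodules.

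For the forward implication, I would first apply the hypothesis to $R$ itself, viewed as a cyclic module over itself: then $R$ is FMS, i.e.\ an FMIR, so by Proposition~\ref{2.3} we can write $R = \prod_{i=1}^n R_i$ where each $R_i$ is a finite local ring, a SPIR, or a field. It suffices to rule out that any $R_i$ is infinite. Suppose some $R_i$ is infinite; then $R_i$ is either an infinite field or an infinite SPIR. In the SPIR case, with maximal ideal $M_i$ of nilpotency index $p$, the successive quotients $M_i^j/M_i^{j+1}$ are one-dimensional over the residue field $k_i := R_i/M_i$, so $|R_i| = |k_i|^p$; hence $k_i$ must be infinite. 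In the field case, $k_i = R_i$ is infinite. Either way, $k_i$ is infinite.

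The contradiction comes from an explicit counterexample: take $N := k_i \oplus k_i$, viewed as an $R$-module via the composite surjection $R \twoheadrightarrow R_i \twoheadrightarrow k_i$. Then $N$ is f.g.\ over $R$ (by two elements), but the $R$-action factors through $k_i$, so the $R$-submodules of $N$ are exactly its $k_i$-vector subspaces; the lines $k_i \cdot (1,\alpha)$ for $\alpha \in k_i$ provide infinitely many of these, so $N$ is not FMS, contradicting the hypothesis. Alternatively, one can invoke Corollary~\ref{5.6} directly: the annihilator $C = M_i \times \prod_{j \neq i} R_j$ is a maximal ideal of $R$ with $R/C \cong k_i$ infinite, and $N_C \cong k_i^2$ is not cyclic, so condition~(1) of Corollary~\ref{5.6} fails.

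This argument is short and there is no serious obstacle; the key observation is simply that an infinite factor in the FMIR decomposition of $R$ is forced to have an infinite residue field $k_i$, after which a two-dimensional $k_i$-vector space produces the required counterexample.
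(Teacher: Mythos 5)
Your proof is correct, but it takes a genuinely different route from the paper's. The paper argues directly and without any structure theory: assuming every f.g.\ $R$-module is FMS, it forms $S:=R[X,Y]/(X^2,XY,Y^2)=R[x,y]$, which is $R$-free with basis $\{1,x,y\}$, and notes that the submodules $S_{\alpha}:=R(x+\alpha y)$, $\alpha\in R$, are pairwise distinct (freeness forces $x+\alpha y=r(x+\beta y)$ to give $r=1$ and $\alpha=\beta$), so $R$ must be finite; this exhibits $|R|$ distinct submodules of a single explicit f.g.\ module and uses neither Proposition~\ref{2.3} nor Corollary~\ref{5.6}. You instead first apply the hypothesis to $R$ itself to conclude that $R$ is an FMIR, invoke the Anderson--Chun decomposition of Proposition~\ref{2.3}, observe that an infinite local factor forces an infinite residue field $k_i$ (correct, and consistent with the paper's remark preceding Lemma~\ref{2.13} that a local Artinian ring is finite iff its residue field is), and then use the plane $k_i\oplus k_i$ with its infinitely many lines $k_i(1,\alpha)$. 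This is heavier machinery for the same conclusion, but it has the virtue of locating the obstruction precisely --- an infinite residue field at some maximal ideal in the support --- which is exactly the dichotomy that drives Theorem~\ref{5.5} and Corollary~\ref{5.6}; your alternative ending via Corollary~\ref{5.6} makes that explicit. At bottom both counterexamples are the same observation: a free rank-two module over an infinite ring (or field) has infinitely many ``lines'' through $(1,\alpha)$.
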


\begin{proof} If $R$ is  finite, then  $\llbracket M\rrbracket $ is finite for any f.g$.$ $R$-module $M$. 

Conversely, let $R$ be a ring such that any f.g$.$ $R$-module is an FMS module. Set $S:=R [X,Y]/(X^2,XY,Y^2)=R[x,y]$, where $x$ and $y$ are respectively the classes of $X$ and $Y$ in $S$. Then $S$ is an $R$-module with basis $\{1,x,y\}$. For each $\alpha\in R$, set $S_{\alpha}:=R(x+\alpha y)$, which is an $R$-submodule of $S$. If $\alpha,\beta\in R,\  \alpha\neq\beta$, then $S_{\alpha}\neq S_{\beta}$. Therefore,  $|R|=\infty$ gives a contradiction and  $R$ is a finite ring.
\end{proof}

\begin{remark}\label{5.10} If $N$ is a submodule of an $R$-module $M$, Proposition~\ref{5.2} shows that $R\subseteq R(+)M$ has FCP if and only if $R\subseteq R(+)N$ and $R\subseteq R(+)(M/N)$ have FCP. This property does not hold for FIP. It is enough to consider a 2-dimensional vector space $M$ over an infinite field, and a 1-dimensional subspace $N$ because $N$ and $M/N$ are FMS modules, while $M$ is not.
\end{remark}

\begin{example}\label{5.11} In the following examples, we mix properties of the former sections. 

(1) Let $k$ be a field, $n> 1$ an integer, $E$ an $n$-dimensional $k$-vector space with basis $\{e_1, \ldots,e_n\}$ and set $R:=k^n$. We can equip $E$ with the structure of an $R$-module by the following law: for $(a_1,\ldots,a_n)\in R$ and $x=\sum_{i=1}^nx_ie_i,\ x_i\in k$, we set $(a_1,\ldots,a_n)x:=\sum_{i =1}^na_ix_ie_i$. Then $E$ is generated over $R$ by $\{e_1,\ldots,e_n\}$ and faithful, while $R$ is an FMIR. Finally, the prime (maximal) ideals of $R$ are the ideals $P_i:=\{(a_1,\ldots,a_n)\in R\ |\ a_i=0\}$ for $i=1,\ldots,n$, so that $R_{P_i}\cong k$. The canonical base $\{\varepsilon_1,\ldots,\varepsilon_n\}$  of $R$ over $k$ is such that each $\varepsilon_i\notin P_i$. We have $\varepsilon_ie_j=0$ for each $i,j\in \{1,\ldots,n\}$ such that $i\neq j$, so that $e_j/1=0$ in $R_{P_i}$ for $j\neq i$. It follows that $E_{P_i}=\sum_{j=1}^nR_{P_i}(e_j/1)=R_{P_i}(e_i/1)$ is cyclic over $R_{P_i}\cong k$. Then, whatever $|k|$ may be, Corollary~\ref{5.6} gives that $E$ is an FMS $R$-module. But, as soon as $|k|=\infty$ and $n\geq 2$, $|\llbracket E\rrbracket |$ is infinite (as a $k$-module). Since $E_{P_i}\cong k(e_i/1)$ is one-dimensional over $k$, $E_{P_i}$ has only two $R_{P_i}$-submodules). Set $F:=\prod_{i=1}^nE_{P_i}$ and consider the canonical injective morphism of $R$-modules $\varphi:E\to F$ and the projections $\varphi_i:F\to E_ {P_i}$. Any $R$-submodule $N$ of $F$ is of the form $N':=\prod_{i=1}^nN_i$, where $N_i=\varphi_i(N)$, because $N\subseteq N'\subseteq\sum_{i=1}^n\varepsilon_i N$. Now $\varphi$ is a $k$-isomorphism  because $\mathrm{Dim}_k(E)=\mathrm{Dim}_k(F)$, whence an $R$-isomorphism. It follows that $\nu_R (E)=2^n$.

 By Proposition~\ref{5.4},  $k^n\subseteq k^n(+)E$ has FIP, and $k\subseteq k^n$ has FIP by Proposition~\ref{2.1}. 
But, always in view of Proposition~\ref{5.4}, if $|k|=\infty$ and $n\geq 2$, then $k\subseteq k(+)E$ has not FIP, so that $k\subseteq k^n(+)E$ has not FIP.

(1') We keep the context of (1). Set $\mathcal R:=\prod_{i=1}^n(k/(0:e_i))$. Since $(0:e_i)=0$ for each $i$, we get $\mathcal R=k^n$. Then $k\subset k^n$ has FIP while $k\subseteq k(+)E$ has not FIP.

(2) Let $k$ be an infinite field, $n>1$ an integer and $E$ an $n$-dimensional vector space over $k$. Let $u\in\mathrm{End}(E)$ with minimal polynomial $X^n$. Then, $u^n=0$ and $u^{n-1}(e_1)\neq 0$ for  some $e_1\in E$. If $e_i:=u^{i-1}(e_1)$ for any $i\in\{1,\ldots,n\}$, an easy induction shows that $\{e_1, \ldots,e_n\}$ is a basis of $E$ over $k$. Set $R:=k[u]$, then $E$ is a faithful $R$-module with scalar multiplication defined by $P(u)\cdot x:=P(u)(x)$, for $P(X)\in k[X]$ and $x\in E$. Since $R\cong k[X]/(X^n)$ is a SPIR and $E=R\cdot e_1$ because $e_i=u^{i-1}\cdot e_1$ for each $i$, then by Theorem~\ref{5.5}, $E$ is an FMS $R$-module and $R\subseteq R(+)E$ has FIP by Proposition~\ref{5.4}. 

(2') We keep the context of (2). Since $u^n=0,\ u^{n-1}(e_1)\neq 0$ and $e_j=u^{j-1}(e_1)$ for any $j\in\{1,\ldots,n\}$, a short calculation gives $I_j:=(0:_Re_j)=Ru^{n-j+1}$. Then, $\cap_{j=1}^nI_j=0$ because $I_1=Ru^n=0$ and $\{I_1,\ldots,I_n\}$ is a separating family such that $I_j\subset I_{j+1}$ for each $j\in
\{1,\ldots,n-1\}$. Moreover, $R/I_j=R/Ru^{n-j+1}\cong k[X]/(X^{n-j+1})$. Set $M:=Ru,\ \mathcal R:=\prod_ {i=1}^n(R/(0:e_i))$ and $J_j:=(\cap_{k=1,k\neq j}^nI_k)$. Then, $J_1=I_2\cong(X^{n-1})/(X^n)$ and $J_j= 0$ for each $j>1$. Apply Corollary~\ref{3.8}. We have $\sum_{j=1}^nJ_j=I_2$, giving that $R/\sum_{j=1}^ nJ_ j=R/I_2\cong k[X]/(X^{n-1})$ is a SPIR and $|R/M|=\infty$, because $R/M\cong k$. Since $I_1+J_1=I _2\cong(X^{n-1})/(X^n)$ and $I_j+J_j=I_j\cong(X^{n-j+1})/(X^n)$ for each $j>1$, it is enough to take $n>3$ to get that $R\subset \mathcal R$ has not FIP.

(3) Let  $M=\sum_{i=1}^nRe_i$ be a faithful Artinian $R$-module and set $\mathcal R:=\prod_{i=1}^n(R/(0:e_i))$.  Then, $R$ is an Artinian ring in view of \cite[Theorem 2, page 180]{N}. Since $(0:M)=\cap_{i=1}^n(0:e_i)=0$, the family $\{(0:e_i)\}_{i=1,\ldots,n}$  is separating and $R\subseteq \mathcal{R}$ has FCP by Proposition~\ref{3.1}.

Examples (1') and (2') show that for a finitely generated $R$-module $M=\sum_{i=1}^nRe_i$ such that $\{(0:e_1),\ldots,(0:e_n)\}$ is a separating family, we may have only one of the two extensions $R\subseteq R(+)M$ and $R\subseteq \prod_{i=1}^n(R/(0:e_i))$ which has FIP, and not the other one. 

(4) Let $k$ be an infinite field, $n>1$ an integer and $E$ an $n$-dimensional vector space over $k$. Let $u\in\mathrm{End}(E)$ with minimal polynomial $\pi_u(X):=\prod_{i=1}^sP_i^{\alpha_i}(X)$, with each $P _i(X)\in k[X]$  of degree $1$, $P_i(X)\neq P_j(X)$ for $i\neq j$, and such that $n=\sum_{i=1}^s\alpha_i$. For each $i$, set $E_i:=\ker(P_i^{\alpha_i}(u))$. The Kernel Lemma gives that $E=\bigoplus_{i=1}^sE_i\ (*)$, with $\alpha_i=\dim_k(E_i)$. If $R:=k[u]$, then $E$ is a faithful $R$-module for the scalar multiplication defined by $P(u)\cdot x:=P(u)(x)$, for $P(X)\in k[X]$ and $x\in E$. Since $R\cong k[X]/\pi_u (X)$ is an Artinian FMIR, to conclude that $E$ is an FMS module over $R$ by applying Corollary~\ref{5.6}, we need only to show that $E_M$ is cyclic for each $M\in\mathrm{Max}(R)=\{M_1,\ldots,M_s\}$ where $M_i:=P_i(u)R$. We next prove that $E_{M_i}\cong(E_i)_{M_i}$ as $R_{M_i}$-modules. Let $x\in E_j$ for some $j\neq i$, then $P_j^{\alpha_j}(u)(x)=0$ and $P_j^{\alpha_j}(u)$ is a unit in $R_{M_i}$ since $P_j(X)\not\in(P_i(X))$. It follows that $x/1=0$ in $E_{M_i}$, so that $E_{M_i}\cong(E_i)_{M_i}$ by $(*)$. Now, we are reduced to (2) with $P_i^{\alpha_i}(u)=0$ in $(E_i)_{M_i}$, so that each $(E_i)_{M_i}$ is cyclic over $R_{M_i}$  and Corollary~\ref{5.6} holds.
\end{example}

\begin{theorem}\label{5.12}  A faithful $R$-module $M$ is an FMS module if and only if the two following conditions are satisfied:

\begin{enumerate}
\item $R$ is an FMIR which is a direct product of two rings $R'\times R''$, where $|R'|<\infty$ and $|R''/P|=\infty$ for any $P\in\mathrm{Spec}(R'')$.

\item $M$ is the direct product of a finite $R'$-module and a rank one projective $R''$-module.
\end{enumerate}
\end{theorem}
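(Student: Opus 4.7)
The plan is to derive the theorem from Corollary~\ref{5.6} combined with the structure of FMIRs given by Proposition~\ref{2.3}, together with the product decomposition of modules along a product of rings.

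For the ``only if'' direction, assume $M$ is faithful and FMS, so $C:=(0:M)=0$. Corollary~\ref{5.6} gives that $R$ is an FMIR, hence by Proposition~\ref{2.3}, $R$ is a finite product of its local rings, each being a finite local ring, a SPIR, or a field. I would group these local rings into those with finite cardinality and those whose residue field is infinite (i.e., the infinite fields and infinite SPIRs), producing the desired decomposition $R=R'\times R''$ with $|R'|<\infty$ and $|R''/P|=\infty$ for every $P\in\mathrm{Spec}(R'')$ (noting that $\mathrm{Spec}(R'')$ consists of primes in the local factors, each of which has infinite residue field). The corresponding idempotents decompose $M=M'\times M''$ as an $R$-module with $M'$ faithful over $R'$ and $M''$ faithful over $R''$. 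Since $R'$ is finite and $M'$ is finitely generated (by Corollary~\ref{5.6}), $M'$ is finite.

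The heart of the argument is showing $M''$ is rank one projective. By Corollary~\ref{5.6}, $M''_P$ is cyclic for each $P\in\mathrm{Spec}(R'')$, and since $M''$ is faithful and $R''=\prod_{i}R''_{M_i}$ is a finite product of its local rings, faithfulness descends to each local factor. A faithful cyclic module over any ring is isomorphic to the ring itself (kill the annihilator), so $M''_{M_i}\cong R''_{M_i}$ for each maximal ideal $M_i$ of $R''$. Thus $M''$ is locally free of rank one, hence a rank one projective $R''$-module (in fact, free of rank one because $R''$ is semilocal Artinian).

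For the ``if'' direction, assume (1), (2), and faithfulness. Product decomposition of $R=R'\times R''$ forces $M'$ faithful over $R'$ and $M''$ faithful over $R''$. Since $M'$ is finite, its submodules are finite in number. For $M''$: it is rank one projective, hence finitely generated and locally cyclic, while $R''$ is an FMIR and $(0:_{R''}M'')=0$, so Corollary~\ref{5.6} yields that $M''$ is FMS over $R''$. Finally, I would observe that $R$-submodules of $M=M'\times M''$ are exactly products $N'\times N''$ of an $R'$-submodule of $M'$ with an $R''$-submodule of $M''$ (using the orthogonal idempotents), so $\nu_R(M)=\nu_{R'}(M')\cdot\nu_{R''}(M'')<\infty$. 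The main obstacle is the step identifying $M''_P\cong R''_P$ from the conjunction of local cyclicity and global faithfulness; this relies on the fact that $R''$ is a \emph{finite} product of its local rings, so that faithfulness over $R''$ is inherited locally at each maximal ideal.
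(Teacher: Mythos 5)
Your proof is correct and follows essentially the same route as the paper's: decompose $R$ via Proposition~\ref{2.3} into a finite factor $R'$ and a factor $R''$ all of whose residue fields are infinite, split $M$ accordingly by the idempotents, and use Corollary~\ref{5.6} together with local cyclicity and faithfulness to identify $M''$ as rank one projective. The only (harmless) divergence is in the key local step: where the paper checks by an explicit denominator-clearing computation that each local generator $e_i/1$ is free over $R_{P_i}$, you observe that faithfulness passes to each local factor because $R''$ is a finite product of its local rings and then invoke that a faithful cyclic module is free of rank one --- an equivalent argument.
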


\begin{proof} If $M$ is an FMS module, $R$ is an FMIR and $M$ is f.g$.$ over $R$ by Corollary~\ref{5.6}. Then by Proposition~\ref{2.3}, $R=\prod_{i=1}^nR_i$, a product of local rings that are either  finite, or a SPIR, or a field. Let $R'$ be the ring product of the $R_i$ that are finite and $R''$ the product of the others. Then $|R'|<\infty$ and a SPIR factor $(R_i,P_i)$ of $R''$ is such that $|R_i/P_i|=\infty$ because $R_i$ is local Artinian. When $R_i$ is an infinite field, take $P_i=0$. So, (1) holds with $R=R' \times R''$.

Set $M':=R'M=\{(r',0)m\mid r'\in R',\ m\in M\}$ and $M'':=R''M=\{(0,r'')m\mid r''\in R'',\ m\in M\}$. By \cite[Remarque 3, ch.II, p.32]{Bki A1}, we get $M=M'\bigoplus M''\cong M'\times M''$, $R'M''=R''M'=0$ and $(0:_{R''}M'')=0$. Clearly, $|M'|<\infty$ since $M'$ is f.g$.$ over the finite ring $R'$. In the same way, $M''$ is f.g$.$ over $R''$. Now an $R''$-submodule $N$ of $M''$ gives an $R$-submodule of $M$ by the one-to-one function $N \mapsto M'\times N$. It follows that $M''$ is an FMS $R''$-module. Therefore, we can assume that $R$ is an FMIR with $|R/P|=\infty$ for each $P\in\mathrm{Spec}(R)=\{P_1,\dots,P_n\}$. By Corollary~\ref{5.6}, $M$ is generated over $R$ by some $e_1,\ldots,e_n\in M$ such that $M_{P_i}=R_ {P_i}(e_i/1)$ for each $i$. Actually, $e_i/1$ is free over $R_{P_i}$: suppose that $(a/t)(e_i/1)=0$ for $a\in R$ and $t\in R\setminus P_i$. There is some $s_i\in R\setminus P_i$ such that $s_iae_i=0$. Moreover, $ e_j/1\in M_{P_i}$ for $j\neq i$ allows us to pick up some $s_j\in R\setminus P_i$ such that $s_jae_j=0$. Setting $s:=s_1\cdots s_n$, we get $sae_k=0$ for each $k\in\{1,\ldots,n\}$. Since $M$ is faithful, $sa=0$ and $a/t=0$. By \cite[Th\'eor\`eme 2, ch.II, p.141]{Bki AC}, $M$ is a rank one projective $R$-module and (2) follows.

Conversely, assume that (1) and (2) hold and keep the  above notation  with $R=R'\times R''$,  $|R'|<\infty$, $|R''/P|=\infty$ for any $P\in\mathrm{Spec}(R'')$ and $M=M'\times M''$, where $M'$ is a finite $R'$-module and $M''$ is a rank one projective $R''$-module. Then, from \cite[Th\'eor\`eme 2, ch. II, p. 141]{Bki AC}, we deduce  that $M''$ is f.g$.$ over $R''$, with $M''_{P}$ cyclic for each maximal ideal $P$ of $R''$. Since $M'$ is also f.g$.$ over $R'$ because finite, $M$ is f.g$.$ over $R$.  For each $N\in\mathrm{Max}(R)$ such that $|R/N|=\infty$, there exists $P\in\mathrm{Max}(R'')$ such that $N=R'\times P$ and in this case $M_N\cong M''_{P}$ as $R_N$-modules. Indeed, consider the $R_N$-linear isomorphism $u:M_N\cong(M'\times M'')_{R'\times P}\to M''_P$ defined by $u((m',m'')/(s,t))=m''/t$,  using the ring isomorphism $R_N\cong R''_P$. It follows that $M_{N}$ is cyclic over $R_N$. By Corollary~\ref{5.6}, we can conclude that $M$ is an FMS module.
\end{proof}

We end this section by two results about quadratic extensions. According to \cite{CDL1}, an extension $R\subset S$ is called {\it pointwise minimal} if $R\subset R[t]$ is minimal for each $t\in S\setminus R$. 

\begin{proposition}\label{5.13} Let $R\subset S$ be a quadratic seminormal infra-integral FIP extension, where $(R,M, k)$ is a quasi-local ring with $k:= R/M$. Then, $R\subset S$ is pointwise minimal, and  minimal  as soon as $|k|>2$.
\end{proposition}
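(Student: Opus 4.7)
My plan is to first show $(R:S)=M$, then deduce pointwise minimality from the Gilbert bijection of Proposition~\ref{1.4}, and finally, under $|k|>2$, force the quotient $S/M$ to be at most two-dimensional over $k$ by exploiting the quadratic hypothesis.

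I would start by proving $(R:S)=M$. The ideal $C:=(R:S)$ is a radical ideal of $S$ because $R\subseteq S$ is seminormal, and $C\subseteq M$ by quasi-locality. The FIP hypothesis forces FCP, hence integrality plus FCP give that $R/C$ is Artinian and $S/C$ is a module-finite, Artinian, semi-local extension with zero conductor. Because $R\subseteq S$ is infra-integral, every maximal ideal $N$ of $S$ contracts to $M$, so $M\subseteq N$; thus $M/C$ is contained in every maximal of $S/C$, whose intersection is the (still radical) zero conductor. This forces $M=C$.

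Then, for $t\in S\setminus R$, the quadratic hypothesis says $R[t]=R+Rt$, and any $r\in(R:S)$ satisfies $rR[t]\subseteq rS\subseteq R$, so $(R:R[t])\supseteq(R:S)=M$; quasi-locality of $R$ and $R\subsetneq R[t]$ then force $(R:R[t])=M$. Proposition~\ref{1.4} now supplies a bijection between $[R,R[t]]$ and the ideals of the field $R/M=k$; this set has two elements, making $R\subset R[t]$ minimal. Hence $R\subset S$ is pointwise minimal.

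For the last assertion, I would reduce modulo $M=(R:S)$: the inclusion $k=R/M\hookrightarrow S/M$ is seminormal, infra-integral, FIP and quadratic, with zero conductor, so $S/M$ is a finite-dimensional reduced $k$-algebra with all residue fields equal to $k$; by CRT, $S/M\cong k^n$ for some $n\geq 1$. The case $n=1$ would yield $S=R+M=R$, contradicting $R\subsetneq S$. If $n\geq 3$ and $|k|>2$, I would pick three distinct scalars $\alpha_1,\alpha_2,\alpha_3\in k$ and lift $(\alpha_1,\alpha_2,\alpha_3,0,\ldots,0)\in k^n\cong S/M$ to some $t\in S$; the quadratic identity $t^2=a+bt$ with $a,b\in R$ would then reduce modulo $M$ to $\alpha_i^2=\bar a+\bar b\alpha_i$ for $i=1,2,3$, giving three distinct roots of a quadratic polynomial in the field $k$, which is absurd. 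So $n=2$, and the conductor-induced bijection $[R,S]\leftrightarrow[k,k^2]$ together with $|[k,k^2]|=B_2=2$ from Proposition~\ref{2.1} yields $|[R,S]|=2$, i.e., $R\subset S$ is minimal. The main obstacle I anticipate is the first step—extracting $(R:S)=M$ cleanly from the interplay of seminormality (radical conductor), infra-integrality (residues preserved), and FCP (Artinian quotient); once this conductor identification is in hand, the remainder is a routine lifting argument in $S/M\cong k^n$.
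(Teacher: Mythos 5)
Your proof is correct and follows essentially the same route as the paper: identify the conductor with $M$ (the paper does this per subring $R+Rt$ via radicality of the conductor plus Artinianness of the quotient, you do it once globally via $\mathrm{Jac}(S/C)=0$ for the Artinian reduced ring $S/C$), invoke the Gilbert bijection of Proposition~\ref{1.4} for pointwise minimality, and rule out $k\subseteq k^n$ quadratic with $n>2$ by exhibiting too many roots of a degree-two polynomial over $k$. The only cosmetic difference is that the paper uses a single $\alpha\in k\setminus\{0,1\}$ and the element $e_1+\alpha e_2$ where you use three distinct scalars; both are the same root-counting argument.
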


\begin{proof} Since $R\subset S$ is quadratic, the $R$-module $S_t:=R+Rt$ is a ring for each $t\in S\setminus R$. Moreover, $C_t:=(R:S_t)$ is a radical ideal of $S_t$ and $R$ and $R/C_t$ is Artinian by  \cite[Lemma 4.8, Theorem 4.2]{DPP2}, so that $C_t=M$. From $\dim_{k}(S_t/M)\leq 2$, we deduce that $ k\subset S_t/M$ is minimal, and so is $R\subset S_t$. Hence $R\subset S$ is pointwise minimal. In fact, by \cite[Proposition 4.9]{DPP2}, $(R:S)=M=\cap_{i=1}^nM_i$, where $\{M_1,\ldots,M_n\}:=\mathrm{Max}(S)$ and $R/M\subseteq S/M\cong\prod_{i=1}^nS/M_i\cong(R/M)^n$ is also quadratic. We claim that if $k\subseteq k^n$ is quadratic and $|k|>2$, then $n=2$. Deny and let $\{e_1,\ldots,e_n\}$ be the canonical basis of $k^n$. Let $f:=e_1+\alpha e_2$ for $\alpha\in k\setminus\{0,1\}$. Then $f^2=e_1+\alpha^2e_2\in k+kf$, so that $a\sum_{i=1}^ne_i+b(e_1+\alpha e_2)=f^2$ for some $a,b\in k$. Since $n>2$, we get $a=0 $, so that $b=1$ and $\alpha^2=\alpha$, a contradiction. Then, $n=2$, $k\subset k^2$ is minimal, and so is $R\subseteq S$. However, if $k=\{0,1\}$, then $k\subseteq k^n$ is quadratic, seminormal, infra-integral and FIP (each element of $k^n$ is idempotent), even if $n>2$ (Proposition~\ref{2.1}). 
\end{proof}

\begin{remark}\label{5.14} Let $R\subset S$ be a minimal extension. Then $R\subset S$ is a quadratic extension if and only if $R\subset S$ is a $\Delta_0$-extension.
\end{remark}

\begin{proof} A minimal extension is obviously a $\Delta$-extension. Moreover, $R\subset S$ is a $\Delta_0$-extension if and only if $R\subset S$ is a quadratic $\Delta$-extension by \cite[Proposition 5]{HP}, giving the result.
\end{proof}

\section{Etale and separable morphisms}
A finite extension of fields has FIP when it is separable. We now examine separable (\'etale) extensions. For instance, let $R$ be an Artinian reduced ring and  $f_1,\ldots, f_n \in R$ such that $(f_1,\ldots,f_n) = R$; we set $S:= \prod_{i=1}^n R_{f_i}$. It is known that $R\to S$ is a faithfully flat \'etale morphism. For each $P\in\mathrm{Spec}(R)$, we get a morphism of the form $R_ P\to S_P=(R_P)^p$ where $p$ is the number of open subsets $\mathrm{D}(f_i)$ containing $P$ while  $R_P$ is a field. In view of \cite[Proposition 3.7]{DPP2}, $R \to S$ is an FIP extension by Proposition~\ref{2.1}. We generalize below this example.

 Before that, we introduce some terminology for ring morphisms $R\to S$ of finite type ($R$-algebras $S$ of finite type). For some authors, a separable morphism $R\to S$ is such that $S\otimes _RS\to S$ defines $S$ as a projective $S\otimes_RS$-module \cite{DMI}, a definition we keep in the rest of the paper. In case $R\to S$ is of finite type, then $R\to S$ is separable if and only if its $S$-module of $K\ddot{a}hler$ differential $\Omega_{S|R}=0$. In Algebraic Geometry, such a morphism is called either {\it formally neat} or {\it formally unramified}. It is well known that a morphism is \'etale if and only if it is flat of finite presentation and formally neat.
 
 \begin{proposition}\label{6.1} Let $f: R \subseteq  S$ be a separable ring extension such that $R$ is Artinian and reduced. The following statements hold:
\begin{enumerate} 
\item  $R \to S$ is (module-)finite and \'etale.
\item $R \to S$ has FIP and is seminormal.
\end{enumerate}
Moreover, $|[R,S]|\leq\prod_{M\in\mathrm{MSupp}(S/R)}B_{n_M} $, where $n_M=\mathrm{dim}_{R/M}(S/MS)$.
 \end{proposition}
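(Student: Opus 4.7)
The plan is to exploit the decomposition of $R$ as an Artinian reduced ring: $R=\prod_{i=1}^{m}k_{i}$, where $k_{i}=R_{M_{i}}$ is the residue field at each $M_{i}\in\mathrm{Max}(R)$. The orthogonal idempotents $e_{i}\in R$ force $S=\prod_{i=1}^{m}S_{i}$ with $S_{i}:=e_{i}S$ a separable $k_{i}$-algebra (separability is preserved by base change along the central idempotent). The classical structure theorem for separable algebras over a field (DeMeyer--Ingraham) then gives that each $S_{i}$ is a finite product $\prod_{j}L_{ij}$ of finite separable field extensions of $k_{i}$; in particular each $S_{i}$ is a finite $k_{i}$-module, so $S$ is module-finite over $R$. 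To finish (1), note that $R$ is von Neumann regular, so every $R$-module is flat; combined with finite presentation (finite over a Noetherian ring) and formal unramifiedness (the separability hypothesis), this gives that $R\to S$ is \'etale.

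For seminormality in (2), since $S$ is itself a finite product of fields, take $b=(b_{ij})\in S$ with $b^{2},b^{3}\in R$. Then $b_{ij}^{2}=a_{i}\in k_{i}$ and $b_{ij}^{3}=c_{i}\in k_{i}$ are independent of $j$. Either $a_{i}\neq 0$, so $b_{ij}=c_{i}/a_{i}\in k_{i}$ for every $j$, or $a_{i}=0$, so $b_{ij}=0$ in the field $L_{ij}$; in both cases $b\in R$.

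For FIP and the Bell number bound, I would first observe that any $T\in[R,S]$ contains every $e_{i}$, hence decomposes as $T=\prod_{i}e_{i}T$ with $e_{i}T\in[k_{i},S_{i}]$; so $|[R,S]|=\prod_{i}|[k_{i},S_{i}]|$. Fix $i$, set $n_{i}:=n_{M_{i}}=\dim_{k_{i}}S_{i}$, and pick an algebraic closure $\overline{k_{i}}$. Since each $L_{ij}/k_{i}$ is finite separable, $S_{i}\otimes_{k_{i}}\overline{k_{i}}\cong\overline{k_{i}}^{\,n_{i}}$. Faithful flatness of $k_{i}\to\overline{k_{i}}$ implies that for each $T\in[k_{i},S_{i}]$ one has $T=(T\otimes_{k_{i}}\overline{k_{i}})\cap S_{i}$ inside $S_{i}\otimes_{k_{i}}\overline{k_{i}}$, so $T\mapsto T\otimes_{k_{i}}\overline{k_{i}}$ is an injection $[k_{i},S_{i}]\hookrightarrow[\overline{k_{i}},\overline{k_{i}}^{\,n_{i}}]$. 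Proposition~\ref{2.1} bounds the target by $B_{n_{i}}$, so $|[k_{i},S_{i}]|\leq B_{n_{i}}$. The indices $i$ with $S_{i}=k_{i}$ correspond to $M_{i}\notin\mathrm{MSupp}(S/R)$ and contribute only $B_{1}=1$, so the product collapses to $\prod_{M\in\mathrm{MSupp}(S/R)}B_{n_{M}}$.

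The anticipated main obstacle is this final descent step: verifying that distinct $k_{i}$-subalgebras of $S_{i}$ remain distinct after extension of scalars to $\overline{k_{i}}$, for which faithful flatness and the recovery identity $T=(T\otimes_{k_{i}}\overline{k_{i}})\cap S_{i}$ are essential. A secondary non-triviality, on which all three parts of the argument silently rest, is the classical theorem identifying commutative separable algebras over a field with finite products of finite separable field extensions.
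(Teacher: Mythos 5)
Your proof is correct and follows essentially the same route as the paper's: reduce to the field case (you via the idempotent decomposition of the Artinian reduced ring, the paper via localization at maximal ideals, which here amounts to the same thing), invoke the DeMeyer--Ingraham finiteness and structure of separable algebras over a field, and transfer the FIP count from $K\subseteq K^n$ (Proposition~\ref{2.1}) back along a faithfully flat base change to a splitting field. The only cosmetic differences are that you verify seminormality and the descent identity $T=(T\otimes_{k_i}\overline{k_i})\cap S_i$ by hand, where the paper instead cites its references for seminormal descent and for faithfully flat descent of FIP.
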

\begin{proof} (1) We first note that $\mathrm{MSupp}(S/R)$ is finite. Thus it is enough to show that the separable morphism $R_M\to S_M$ is finite for $M\in\mathrm{Spec}(R)$. Since $R_M$ is a field, this follows from \cite[Corollary 2.2, p.48]{DMI}. To complete the proof, observe that $R$ is absolutely flat and Noetherian.
 
 (2) We can assume that $R$ is a field by using \cite[Proposition 3.7]{DPP2}. By \cite[Definition 1, Ch. 5, p.28]{Bki A}, there is a base change $R \to L$, where $L$ is a field extension of $R$, such that $L\otimes_R S \cong L^n$. Then $n$ is necessarily $ \mathrm{dim}_R(S)$.  The result follows from  Proposition 2.1, \cite[Th\'eor\`eme 2.29]{Pic 3} and \cite[Lemma 2.1]{DPP3}, since the base change $R\to L$ is faithfully flat.
 
 The last statement is a consequence of \cite[Theorem 3.6]{DPP2}.
\end{proof}

\begin{theorem}\label{6.2}  Let $R\subseteq S$ be a (module-)finite \'etale extension with conductor $C$. Then 

\begin{enumerate} 
\item  $R\subseteq S$ has FIP and is seminormal if and only if $R/C$ is Artinian and reduced.

\item $R\subseteq S$ has FIP and $R/C$ is reduced if and only if $R\subseteq S$ is seminormal and $R/C$ is Artinian.
\end{enumerate}
\end{theorem}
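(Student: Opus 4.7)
The plan is to reduce both equivalences to the Artinian reduced case handled by Proposition~\ref{6.1}. Set $C := (R:S)$, which is a common ideal of $R$ and $S$, and form the quotient extension $\bar R := R/C \subseteq S/C =: \bar S$. The inclusion is injective because $C \subseteq R$, and since $CS = C$, this is the base change of $R \subseteq S$ along $R \to R/C$; hence $\bar R \subseteq \bar S$ is itself étale and module-finite.

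I will use three small preservation principles to shuttle properties between $R \subseteq S$ and $\bar R \subseteq \bar S$. (a) By \cite[Proposition 3.7]{DPP2}, the map $T \mapsto T/C$ is a bijection $[R,S] \to [\bar R, \bar S]$, so $R \subseteq S$ has FIP iff $\bar R \subseteq \bar S$ does. (b) If $\bar R \subseteq \bar S$ is seminormal, then so is $R \subseteq S$: given $b \in S$ with $b^2, b^3 \in R$, the class $\bar b \in \bar S$ satisfies $\bar b^2, \bar b^3 \in \bar R$, hence $\bar b \in \bar R$, so $b \in R + C = R$. (c) If $R \subseteq S$ is seminormal, then $C = (R:S)$ is a radical ideal of $S$ (recalled in the introduction), and therefore also of $R$, so $R/C$ is reduced. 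Finally I record that since $R \subseteq S$ is integral, FIP implies FCP, and then \cite[Theorem 4.2]{DPP2} yields that $R/C$ is Artinian.

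The heart of the argument is the following consequence of Proposition~\ref{6.1}: whenever $R/C$ is Artinian and reduced, applying that proposition to the étale extension $\bar R \subseteq \bar S$ shows it has FIP and is seminormal, and then (a) and (b) transfer both properties to $R \subseteq S$.

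With this in hand, (1) and (2) are immediate bookkeeping. For (1), the forward direction combines FIP $\Rightarrow R/C$ Artinian (via FCP and \cite[Theorem 4.2]{DPP2}) with seminormal $\Rightarrow R/C$ reduced (principle (c)); the reverse direction is the boxed consequence of Proposition~\ref{6.1} above. For (2)$\Rightarrow$, FIP gives $R/C$ Artinian, and together with the assumed reducedness we invoke Proposition~\ref{6.1} on $\bar R \subseteq \bar S$ to obtain seminormality there, which transfers to $R \subseteq S$ by (b). For (2)$\Leftarrow$, seminormality gives $R/C$ reduced by (c), and together with the assumed Artinian property we apply Proposition~\ref{6.1} once more to obtain FIP of $\bar R \subseteq \bar S$, which transfers to $R \subseteq S$ by (a). The only non-bookkeeping step is principle (b); its three-line verification above is the one place a direct computation is required.
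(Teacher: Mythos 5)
Your proposal is correct and follows essentially the same route as the paper: both reduce to the quotient extension $R/C\subseteq S/C$, apply Proposition~\ref{6.1} to it when $R/C$ is Artinian and reduced, and use \cite[Theorem 4.2]{DPP2} for the Artinian direction and the radicality of the conductor of a seminormal extension for the reduced direction. The only difference is that you spell out the transfer principles (the bijection $[R,S]\to[R/C,S/C]$ and the descent of seminormality from $R/C\subseteq S/C$ to $R\subseteq S$) that the paper leaves implicit.
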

\begin{proof} (1) Assume that $R/C$ is Artinian and reduced. In that case $R/C\subseteq S/C$ is module-finite and separable. In view of Proposition~\ref{6.1}, this extension has FIP and is seminormal and so is $R\subseteq S$.

Conversely, if $R\subseteq S$ has FIP, it has FCP and then $R/C$ is Artinian by \cite[Theorem 4.2]{DPP2}. Moreover, the seminormality of $R\subseteq S$ entails that $C$ is semi-prime in $S$ \cite[Lemma 4.8]{DPP2}, whence in $R$.

(2) If $R\subseteq S$ has FIP, then $R/C$ is Artinian by (1), which gives that $R\subseteq S$ is seminormal when moreover, $R/C$ is reduced.

Always by (1), the seminormality of $R\subseteq S$ entails that $R/C$ is reduced, and then $R\subseteq S$ has FIP, whereas $R/C$ is Artinian.
\end{proof}

As a consequence of the above theorem, we see that a seminormal finite separable extension has FIP if and only if it has FCP. Moreover, it follows from Proposition~\ref{6.1} that a separable extension $R\subseteq S$ whose conductor is a maximal ideal of $R$ has FIP.

\begin{remark}\label{6.3} Let $A$ be a ring, $p(X)\in A[X]$ and $B:=A[X]/(p(X))$, with $p(X)$ monic, so that $f:A\to B$ is faithfully flat. It is easy to show, using \cite[Lemma 2.6]{GP2}, that $f$ is infra-integral if and only if $p(X)$ splits in each $\kappa(P)[X]$ for $P\in\mathrm{Spec}(A)$, so that each fiber morphism  $\kappa(P)[X]\to\kappa(P)[X]\otimes_AB$ is of the form $\kappa(P)\to\kappa(P)^n$ for some integer $n$. It follows that  $f$ is \'etale if $f$ is infra-integral. Since the conductor of $f$ is $0$, when $f$ is infra-integral,  $f$ has FIP and is seminormal if and only if $A$ is Artinian reduced.
\end{remark}

\begin{remark}\label{6.4} Let $k\subseteq K$ be a finite separable extension of fields with minimal polynomial $f(X)\in k[X]$ and $f(X):=(X-\alpha)f_1(X)\cdots f_r(X)$ its decomposition into irreducible factors in $K[X]$. There are ring morphisms $p_i:k[X]/(f(X))\cong K\to K[X]/(f_i(X))$ for $i=1,\dots,r$. Denoting by $L_i$ the pullback fields associated to the morphisms $p_i$ and $K\to K[X]/(f_i(X))$, we get subextensions $k\subseteq L_i$ of $ k\subseteq K$. Each subextension of $k\subseteq K$ is an intersection of some of the $L_i$s \cite[Theorem 1]{HKN}.
 
 This last result can be generalized to FCP extensions, by using Noetherian lattices. Let $R\subseteq S$ be an FCP extension, then $[R,S]$ endowed with the inclusion is a lattice for  intersection and compositum. An element $T\in[R,S]$ is called $\cap$-irreducible (resp$.$ comp-irreducible) if $T=T_1 \cap T_2$ (resp$.$ $T=T_1T_2$) implies $T=T_1$ or $T=T_2$. It is clear that $T\in[R,S]$ is $\cap$-irreducible (resp$.$ comp-irreducible) if and only if either $T=S$ (resp$.$ $T=R$) or there is a unique $T' \in[R,S]$ such that $T\subset T'$ (resp$.$ $T'\subset T$) is a minimal extension. Then by \cite[Proposition 1.4.4]{NO}, any $T\in [R,S]$ is a finite intersection (resp$.$ compositum) of $\cap$-irreducible (resp$.$ comp-irreducible) elements of $[R,S]$.

\end{remark}

\end{document}